\newcommand{\notinclude}[1]{}
\newcommand{\domain}{{D}}
\newcommand{\W}{{\mathcal{W}}}
\newcommand{\Wd}{{\mathcal{W}_{\delta}}}
\newcommand{\Wdg}{{\mathcal{W}_{\delta,\gamma}}}
\newcommand{\WdgK}{{\mathcal{W}^K_{\delta,\gamma}}}
\newcommand{\WdgKh}{{\mathcal{W}^K_{\delta,\gamma,h}}}
\newcommand{\E}{{\mathcal{E}}}
\newcommand{\EK}{{\mathbf{E}^K}}
\newcommand{\Edg}{{\mathcal{E}}_{\delta,\gamma}}
\newcommand{\Edgd}{{\mathcal{E}}^D_{\delta,\gamma}}
\newcommand{\EdgK}{{\mathbf{E}}^K_{\delta,\gamma}}
\newcommand{\EdgKd}{{\mathbf{E}}^{K,D}_{\delta,\gamma}}
\newcommand{\EdgKh}{{\mathbf{E}}^K_{\delta,\gamma,h}}
\newcommand{\EdgKdh}{{\mathbf{E}}^{K,D}_{\delta,\gamma,h}}
\newcommand{\G}{{\mathcal{G}}}
\newcommand{\Gdg}{{\mathcal{G}_{\delta,\gamma}}}
\newcommand{\F}{{\mathbf{F}}}
\newcommand{\Fd}{{\mathbf{F}^D}}
\newcommand{\Fdt}{{\mathbf{F}_{\mbox{\tiny transport}}^D}}
\newcommand{\Fdz}{{\mathbf{F}_{\mbox{\tiny source}}^D}}
\newcommand{\Fdv}{{\mathbf{F}_{\mbox{\tiny viscous}}^D}}
\newcommand{\bary}{\lambda}
\newcommand{\Bdg}{{\mathcal{B}}_{\delta,\gamma}}
\newcommand{\BdgKh}{{\mathbf{B}}^K_{\delta,\gamma,h}}
\newcommand\restr[2]{{\left.\kern-\nulldelimiterspace #1 \vphantom{\big|}\right|_{#2}}}
\newcommand{\beq}{\begin{equation*}}
\newcommand{\eeq}{\end{equation*}}
\newcommand{\beqn}{\begin{equation}}
\newcommand{\eeqn}{\end{equation}}
\newcommand{\beqa}{\begin{eqnarray*}}
\newcommand{\eeqa}{\end{eqnarray*}}
\newcommand{\beqan}{\begin{eqnarray}}
\newcommand{\eeqan}{\end{eqnarray}}
\newcounter{counter}
\DeclareRobustCommand\onedot{\futurelet\@let@token\@onedot}
\def\@onedot{\ifx\@let@token.\else.\null\fi\xspace}
\def\eg{\emph{e.g}\onedot} 
\def\ie{\emph{i.e}\onedot}
\renewcommand{\u}{{\theta}} \newcommand{\U}{{\Theta}}    \newcommand{\Uinnervec}{{ \mathbf{\Theta}}} 
\newcommand{\N}{\mathbb{N}}
\newcommand{\R}{\mathbb{R}}
\newcommand{\diff}{{\,\mathrm{d}}}
\newcommand{\id}{\mathrm{1\!I}}
\newcommand{\Id}{\mathrm{1\!I}}
\renewcommand{\div}{{\mathrm{div}}}
\newcommand{\tr}{{\mathrm{tr}}}
\newcommand{\penaltyPushforward}{\delta}
\newcommand{\penaltyHigherOrderDerivative}{\epsilon}
\newcommand{\Imagespace}{\mathcal{I}} 
\newcommand{\deformationSpace}{\mathcal{A}} 
\newcommand{\Wreg}{W}
\def\FG{}
\def\bfv{{\FG v}} \def\bfw{{\FG w}}
\def\bfnu{{\nu}} 
\def\dd{\;\!\mathrm{d}}
\newcommand{\OMeas}{\mathscr{M}^+(\Omega)}
\newcommand{\OSMeas}{\mathscr{M}(\Omega)}
\newcommand{\OSMeasd}{\mathscr{M}(\Omega;\R^d)}
\newcommand{\Meas}{\mathscr{M}^+(D)}
\newcommand{\SMeas}{\mathscr{M}(D)}
\newcommand{\SMeasd}{\mathscr{M}(D;\R^d)}
\newcommand{\STMeas}{\mathscr{M}([0,1] \times D)}
\newcommand{\STMeasp}{\mathscr{M_+}([0,1] \times D)}
\newcommand{\STMeasd}{\mathscr{M}([0,1] \times D;\R^d)}
\newcommand{\CE}{\mathcal{CE}}
\newcommand{\cA}{\mathcal{D}}
\newcommand{\weakly}{\rightharpoonup}
\newcommand{\Leb}{\mathscr{L}}
\newcommand{\ABB}{\cA_{\text{BB}}}
\newcommand{\AM}{\cA_{Z}}
\newcommand{\basisfct}{\xi}
\newcommand{\V}{ {\mathcal{V}} }
\newcommand{\mass}{{\mathbf{M}}}
\newcommand{\MatrixBF}{\mathbf{A}}
\newcommand{\RBF}{\mathbf{R}}
\newcommand{\SBF}{\mathbf{B}}
\newcommand{\TBF}{\mathbf{T}}
\newtheorem{Definition}{Definition}[section]
\newtheorem{Theorem}[Definition]{Theorem}
\newtheorem{Proposition}[Definition]{Proposition}
\newtheorem{Lemma}[Definition]{Lemma}
\definecolor{darkblue}{RGB}{0,0,160}
\pgfplotsset{compat=1.5}
\begin{document}

\title{A generalized model for optimal transport of images including dissipation and density modulation}
\author{Jan Maas, Martin Rumpf, Carola Sch\"onlieb, Stefan Simon}
\maketitle
\begin{abstract} 
In this paper the optimal transport and the metamorphosis perspectives are combined. For a pair of given input images geodesic paths in the space of images are defined as minimizers of a resulting path energy. To this end, the underlying Riemannian metric measures the rate of transport cost  and the rate of viscous dissipation. Furthermore, the model is capable to deal with strongly varying image contrast and explicitly allows for sources and sinks in the transport equations which are incorporated in the metric related to the metamorphosis approach by Trouv{\'e} and Younes. 
In the non-viscous case with source term existence of geodesic paths is proven in the space of measures.
The proposed model is explored on the range from merely optimal transport to strongly dissipative dynamics. 
For this model a robust and effective variational time discretization of geodesic paths is proposed. 
This requires to minimize a discrete path energy consisting of a sum of consecutive image matching functionals.
These functionals are defined on corresponding pairs of intensity functions and on associated pairwise matching deformations. 
Existence of time discrete geodesics is demonstrated. Furthermore, a finite element implementation is proposed and applied to instructive test cases and to real images. 
In the non-viscous case this is 
compared to the algorithm proposed by Benamou and Brenier including a discretization of the source term.
Finally, the model is generalized to define discrete weighted barycentres with applications to textures and objects. 
\end{abstract}

\section*{Introduction}
In the past two decades concepts from finite dimensional classical geometry have been successfully transferred to infinite-dimensional spaces, where shapes are contour curves of geometric objects, surfaces, image intensity maps, or  probability densities. These concepts have a continuously increasing impact on the development of novel computational tools in computer vision and imaging, ranging from shape morphing and modeling \cite{KiMiPo07}, and shape statistics, \eg
\cite{FlLuPi04}, to texture analysis \cite{rabin2012wasserstein} and computational anatomy \cite{BeMiTrYo02}. 
Three particularly influential approaches on the space of image maps are linked to optimal transportation \cite{monge1781memoire, BeBr00, ZhYaHa07,papadakis2014optimal}, the flow of diffeomorphism \cite{Ar66a, DuGrMi98} 
and metamorphosis \cite{MiYo01,TrYo05}. Here, we combine these three approaches and explore properties of
the resulting image manifold. Thus, in what follows we briefly review the underlying concepts. 

{\bf Optimal transport and its application in imaging.}
The problem of optimal transport is introduced in the seminal work of Monge \cite{monge1781memoire} in 1781. 
In \cite{Kan42,kantorovich2004problem} Kantorovich proposes a relaxed formulation of Monge's problem which gives rise to the Wasserstein distance considered in this paper. Let $(\domain, d)$ constitute a metric space. The $2$-Wasserstein distance between two probability measures $\mu_A,\mu_B\in\mathbb{P}(\domain)$ is defined by
\begin{equation}\label{pwasserdist}
\W(\mu_A,\mu_B)^2:=\min_{\pi\in\Gamma(\mu_A,\mu_B)} \int_{\domain \times \domain} d(x,y)^2 \diff \pi(x,y).
\end{equation}
Here $\Gamma(\mu_A,\mu_B)$ denotes the set of all probability measures on $\domain \times \domain$
with marginals $\mu_A$ and $\mu_B$ with respect to $x$ and $y$, respectively. 
For an introduction to optimal transport and the Wasserstein distance we refer the reader to the reviews \cite{1047.35001,0954.35011,cedric2003topics,ambrosio2006gradient,villani2008optimal}.

Being a distance function applicable to very general measures (continuous and discrete measures)
the Wasserstein distance has an increasing impact on robust distance measures in imaging \cite{rubner2000earth,1109.49027,peyre2012wasserstein,burger2012regularized}. As the Wasserstein metric is defined for arbitrary probability measures with finite second moment it allows to measure distances between absolutely continuous measures with respect to the Lebesgue measure as well as concentrated measures. With the increase of the complexity of applications efficient numerical computation of \eqref{mapform} became increasingly important. In that respect Benamou and Brenier propose an alternative formulation of the quadratic Wasserstein distance using the perspective of the underlying flow of a density $\u$ with Eulerian velocity $v$ and expressing the transport in terms of a constrained flow \cite{BeBr00}. That is, one asks for a minimizer of the path energy 
\beqn\label{eq:BB}
\E[\u, v] = \int_0^1 \int_\domain \u |v|^2 \diff t
\eeqn
for a density function $\u: [0,1] \times \domain \to \R$ and a velocity field $v: [0,1] \times \domain \to \R^d$
subject to the transport equation $\partial_t \u + \div (\u v) = 0$ and 
the constraints $\u(0) = \u_A$ and $\u(1) = \u_B$. Then the minimal energy is indeed the squared Wasserstein distance between measures $\mu_A$ and $\mu_B$ with corresponding densities $\u_A$ and $\u_B$ respectively. Their algorithm has been immensely influential in the numerical computation of the Wasserstein distance and gradient flows related to it, see \eg \cite{1194.35026,during2010gradient,papadakis2014optimal}. In \cite{papadakis2014optimal}, for instance, a proximal point algorithm for the solution of Benamou-Brenier's formulation \eqref{eq:BB} is derived and applied for the computation of Wasserstein geodesics between two image densities. Alternatively, if $\domain \subseteq \R^d$ is a strictly convex domain, $d$ is the Euclidean distance, and $\mu_A$ and $\mu_B$ are absolutely continuous measures with densities $\u_A$ and $\u_B$, respectively, then one has 
\begin{equation}\label{mapform}
\W(\mu_A,\mu_B)^2 = \min_{\phi_{\#} \mu_A = \mu_B} \int_\domain |\phi(x)-x|^2 \u_A(x) \diff x \,,
\end{equation}
where $\phi_{\#} \mu_A$ denotes the push forward of the measure $\mu_A$ under the mapping $\phi$
and for a diffeomorphism $\phi$ the constraint can be expressed as $(\det D\phi) \, \u_B \circ \phi = \u_A$. In \cite{angenent2003minimizing} an initial mass preserving transport map \cite{moser1965volume} is created and then an explicit time stepping scheme is employed to compute the optimal map from a modified formulation of \eqref{mapform} where the constraint is linearized. In \cite{haber2010efficient} the authors pick up formulation \eqref{mapform} as well and use a sequential quadratic programming method for its optimisation. Moreover, in \cite{chartrand2009gradient} the authors propose a gradient descent for the dual formulation of \eqref{mapform} and show its use for image registration and warping. In \cite{loeper2005numerical,saumier2010efficient} a damped Newton method is used to compute a solution $\Psi$ of the Monge-Ampere 
equation (which is the equality constraint on $\phi=\nabla\Psi$) and subsequently the optimal transport map $\phi$. Finally, let us mention that in \cite{schmitzer2013hierarchical} the authors propose another interesting numerical algorithm for the efficient computation of the Wasserstein distance that is based on an extension of the auction algorithm. The latter optimizes the cost functional in the Wasserstein distance only on a sparse subset of possible assignment pairs still guaranteeing global optimality. Various alternative computational approaches for the Wasserstein distance exist, \eg \cite{dean2006numerical,oberman2008wide,papadakis2014optimal}. 

In terms of imaging application the Wasserstein distance has been employed in the context of image and shape classification, segmentation, registration and warping, and image smoothing. In \cite{rubner2000earth} the Wasserstein distance is used as a distance on the images directly, interpreting images as discrete measures. In other approaches the Wasserstein distance is used on image histograms or points clouds (which could be feature vectors of images), see e.g. \cite{grauman2004fast,ling2007efficient}. In the context of image segmentation and classification similar approaches are used, see, \eg \cite{chan2007histogram,ni2009local,peyre2012wasserstein,oudre2012classification}. In \cite{rabin2012wasserstein} a discrete Wasserstein distance is employed for the computation of barycentres of discrete probability distributions with applications to texture synthesis and mixing. Thereby the original Wasserstein metric is replaced by a sliced version over one-dimensional distributions. In \cite{rabin2010geodesic} 
the same approach is used on clouds of geodesic shape descriptors as a similarity measure to discriminate between different shapes in 2D and 3D shape retrieval. The Wasserstein distance is also used in the context of contrast and colour modification in, \eg \cite{rabin2011wasserstein,ferradans2013regularized}.
In \cite{zhu2003area,haker2004optimal} the quadratic Wasserstein distance is considered to define a rigorous distance between images, applied to non-rigid image registration and warping. 
As a distance function for shapes the Gromov-Wasserstein distance is introduced in a series of works by Memoli \cite{memoli2007use,
memoli2011gromov}. Here, shapes are modelled as compact metric spaces and the Gromov-Wasserstein distance is computed on isometry classes of each space. The Gromov-Wasserstein differs from the Wasserstein distance \eqref{pwasserdist} as it assigns a cost to pairs of transport assignments. 
In the context of surface dissimilarity measurement  
the Wasserstein distance is applied to metric densities on the hyperbolic disc representing conformal mappings of different surfaces \cite{lipman2011conformal}. 
In \cite{schmitzer2013modelling} the authors create a convex shape-prior from a modified Gromov-Wasserstein distance. Their approach can be used for image segmentation problems in which prior shape knowledge on the objects that should be segmented can be provided in terms of a template shape. 
This approach is modified in \cite{schmitzer2013object} using the
quadratic Wasserstein distance as a regularizer between learned reference shapes and the segmentation.
In \cite{burger2012regularized} the Wasserstein distance is used as a data fidelity term in a generic regularization approach and applied for image density estimation and cartoon-texture decomposition. 
Moreover, a review of the use of geodesic methods and in particular optimal transportation in computer vision can be found in \cite{peyre2010geodesic}.

{\bf Flow of diffeomorphism.} The physical modeling of viscous flow involves dissipation as an integrated measure of local friction. Arnold \cite{Ar66a,ArKh98} proposes to study viscous flows from the perspective of a family of diffeomorphisms $(\phi(t))_{t\in [0,1]}: \bar \domain \to \domain$ which describe the transport of densities, \eg image intensities, along particle paths  $(\phi(t,x))_{t\in [0,1]}$ for $x\in \domain$. This concept is picked up in vision by Grenander and coworkers \cite{grenander1981lectures, dupuis1998variational}. As a Riemannian metric $\G^{\mathrm{diff}}[\cdot, \cdot]$ one considers the rate of viscous dissipation induced by the Eulerian flow velocity 
$v(t) = \dot \phi(t) \circ \phi^{-1}(t)$ in a multipolar fluid model (cf. Ne\v{c}as and \v{S}ilhav\'y \cite{NeSi91}). 
Here, the Eulerian motion field $v$ is considered as a tangent vector on the manifold of diffeomorphisms.  
The resulting Riemannian metric one obtains $\tilde \G^{\mathrm{diff}}[v,v] := \int_\domain L[v,v] \diff x$, where $L[v,v] = C \varepsilon[v] : \varepsilon[v]$ with $\varepsilon[v]=(\nabla v + (\nabla v)^T)/2$,
and a deduced path energy
$\tilde \E^{\mathrm{diff}}[\phi] = \int_0^1 \tilde \G^{\mathrm{diff}}[v,v] \diff t$
as an action functional on flows encodes the total accumulated dissipation on the domain $\domain$ and on the time interval $[0,1]$.
To study the warping of two image intensity functions 
$\u_A,\u_B: \domain \to \R$ for which there exists a diffeomorphism $\phi$ with $\u_B = \u_A\circ \phi$ 
a flow minimizing the energy $\E^{\mathrm{diff}}$ subject to the constraints $\phi(0) = \Id$ and $\u_B = \u_A\circ \phi(1)$ defines a geodesic path 
$(\u(t))_{t\in [0,1]}$ with 
$\u(t) = \u_A \circ \phi^{-1}(t)$  in the spaces of images connecting $\u_A$ and $\u_B$.  
If we aim at deriving a Riemannian distance directly between images via the flow of diffeomorphism approach, then a motion field $v$ can be viewed as a representation of an image variation. Obviously, different motion fields might represent the same image variation. Hence, the corresponding equivalence class $\overline v$ is considered as a tangent vector on the image manifold and the associate metric is now given by 
\beqn\label{eq:fdmetric}
\G^{\mathrm{diff}}[\overline v, \overline v] :=   \min_{v \in \overline{v}} \int_\domain L[v,v] \diff x \, .
\eeqn
Consequently, the path energy on a path $(\u(t))_{t\in [0,1]}$ reads as 
\beqn\label{eq:fdenergy}
\E^{\mathrm{diff}}[\u] = \int_0^1 \G^{\mathrm{diff}}[\overline{v},\overline{v}] \diff t\,,
\eeqn
which one minimizes over all image paths with $\u(0) = \u_A$, $\u(1) = \u_B$. Here, we assume that there is at least one path of finite path energy connecting 
$\u_A$ and $\u_B$. In medical applications \cite{BeMiTrYo02} each diffeomorphisms represents a particular anatomic configuration of an anatomic reference structures. For more details we refer to \cite{DuGrMi98,BeMiTr05,JoMi00,MiTrYo02}.

{\bf Metamorphosis.}
A one-to-one correspondence of image grey values in warping applications is frequently not realistic.  The metamorphosis approach offers a suitable generalization of  the flow of diffeomorphism concept. It is first presented by Miller and Younes \cite{MiYo01}. A rigorous analytical treatment is due to Trouv\'e and Younes \cite{TrYo05}. In addition to the transport of image intensities along motion paths the variation of an intensity value along a motion path is allowed and reflected by an additional term in the energy. This term measures the integrated squared material derivative $z=\partial_t \u + \nabla \u \cdot v$. 
From a geometric perspective a pair of material derivative $z$ and motion velocity $v$ represent a variation of an image $\u$. 
Hence, an equivalence class $\overline{(z,v)}$ of all pairs which generate the same image variation is considered as a tangent vector 
on the image manifold. A Riemannian metric $\G^{\mathrm{meta}}[\overline{(z,v)},\overline{(z,v)}]$ acts on these tangent vectors 
and the associated path energy along an image path $(\u(t))_{t\in [0,1]}$ is given by 
\beqn\label{eq:menergy}
\E^{\mathrm{meta}}[\u] = \int_0^1 \G^{\mathrm{meta}}[\overline{(z,v)},\overline{(z,v)}] \diff t\,.
\eeqn
As an example for the underlying Riemannian metric we obtain 
\beqn\label{eq:mmetric}
\G^{\mathrm{meta}}[\overline{(z,v)},\overline{(z,v)}] = \min_{(z,v) \in \overline{(z,v)}} \int_\domain  L[v,v]  + \frac1\penaltyPushforward z^2 \diff x\,,
\eeqn
where the first three terms in the integrant retrieve the metric from the flow of diffeomorphism approach and encode the induced viscous dissipation, whereas the last term penalizes temporal changes of intensities along motion paths.

In this paper, we combine the optimal transportation approach with the metamorphosis approach. Thereby, in addition to the transportation cost we take into account a density variation of the transported measure and viscous dissipation. The paper is organized as follows:
First we present our generalized image transport model in Section \ref{sec:generalized}.
In Section \ref{sec:existenceNonViscous} we prove existence of geodesics in the non-viscous case.
Then we propose a variational time discretization of the full model in Section \ref{sec:timediscrete}, prove existence of time discrete geodesics in Section \ref{sec:discreteexistence} and describe a fully discrete solution scheme in Section \ref{sec:SpatialDiscretization}.
Furthermore, we consider in Section \ref{sec:BB} the algorithm used in \cite{BeBr00} to compute for comparison reasons geodesics in the purely non-viscous case.
Finally, we generalize in Section \ref{sec:barycentre} our model to discrete weighted barycentres and apply it to textures and objects.

\section{The generalized image transport model}\label{sec:generalized}
In this section we will discuss the generalization of the optimal transport model in image warping and blending. 
These generalizations are motivated by two observations in applications:\\
-- Frequently, objects or structures in images, which are in correspondence and are expected to be matched via the transport, have different masses. From a global perspective the assumptions that images are considered as probability distributions is too restrictive. Indeed, the latter requires in advance contrast modulation, which is somewhat artificial. In the classical optimal transport model local mass differences lead to artifacts, where a local mass surplus has to be deposited elsewhere without any structural correspondence. We will no longer enforce the source free transport equation and explicitly incorporate a source term in the path energy which measures density modulation.\\
-- Different from the flow of diffeomorphism approach the optimal transport maps are not necessarily homeomorphisms. On the other hand in many applications one is interested in topological consistency and at the same time the physical background of the application might suggest to incorporate a dissipative term in the path energy. Hence, we combine the classical transport cost model with a weighted viscous dissipation model.

To this end we first recall the formulation \eqref{eq:BB} of the Wasserstein distance proposed by Benamou and Brenier \cite{BeBr00}.
In what follows we restrict to a bounded domain $\domain \subset \R^d$ ($d=2,3$) with Lipschitz boundary.
Now, we allow for a source term $z:[0,1] \times \domain \to \R$ in the transport equation  defined for given image intensity 
$\u:[0,1] \times \domain \to \R$ and transport field $v:[0,1] \times \domain \to \R^d$ as
\beqn\label{eq:source}
z := \partial_t \u + \div (\u v)\,.
\eeqn
Furthermore, we pick up the model for the viscous dissipation in \eqref{eq:fdenergy} and obtain as a new path energy
\beqn\label{eq:energy}
\Edg[\u] = \int_0^1 \Gdg[\overline{(z,v)},\overline{(z,v)}] \diff t\,.
\eeqn
with 
\beqn\label{eq:action}
\Gdg[\overline{(z,v)},\overline{(z,v)}] = \min_{(z,v) \in \overline{(z,v)}} \int_\domain  \u |v|^2 + \frac1\penaltyPushforward z^2 + \gamma L[v,v]   \diff x\,,
\eeqn
which we minimize subject to \eqref{eq:source} and the constraints
$\u(0) = \u_A$ and $\u(1) = \u_B$. Here, $\u_A$ and $\u_B$ are the given input images and the $\overline{(z,v)}$ is the equivalence class of 
pairs of a source term and a transport field, which are consistent with the transport equation \eqref{eq:source} for given image intensity $\u$.
The involved local rate of viscous dissipation is given by  
$L[v,v] =  \tfrac{\lambda}{2} (\tr\varepsilon[v])^2+ \mu\tr(\varepsilon[v]^2) + \penaltyHigherOrderDerivative |D^m v|^2$, 
where $\varepsilon[v]= \frac12(\nabla v +  \nabla v^T)$, $m>1+\frac{d}{2}$ and $\lambda, \, \mu,\, \eta  >0$. 
(The first two terms represent the viscous dissipation of a Newtonian fluid and the higher order terms reflect a multipolar viscosity). 
As in \cite{TrYo05} and similar to \cite{ambrosio2006gradient} the condition $\partial_t \u + \div(\u v) = z$ has to be understood in weak form 
\begin{align*}
 \int_0^1 \int_\domain \eta z \diff x \diff t = - \int_0^1 \int_\domain (\partial_t \eta + v \cdot \nabla \eta) \u \diff x \diff t
\end{align*}
for all $\eta \in C_c^{\infty}( (0,1) \times \domain)$.

The first term in the metric is the classical transport cost rate, the second terms reflects the source term which measures the density modulation of the image intensity and the last term 
is the dissipation rate based on a multipolar viscous fluid model. 
Let us emphasize that for general non divergence free motion fields 
$z$ does not coincide with the material derivative as in the metamorphosis model \cite{TrYo05}. We suppose that $\gamma \geq 0$ measuring the impact of viscosity and $\penaltyPushforward >0$
is a penalty parameter weighting the impact of density modulation on the metric and the path energy. In the formal limit $\penaltyPushforward \to \infty$ and for $\gamma=0$ we retrieve 
the standard transport cost. Given the path energy, we can define a Riemannian (generalized Wasserstein) 
distance $\Wdg[\u_A,\u_B]$ of two images $\u_A$ and $\u_B$ as
\beqn\label{eq:newtransport}
\Wdg[\u_A,\u_B]^2 = \min_{{(\u(t))_{t\in [0,1]}}\atop {\u(0) = \u_A,\; \u(1) = \u_B}} \Edg[\u]\,.
\eeqn

\section{Existence of geodesics for the non-viscous model ($\gamma=0$)}\label{sec:existenceNonViscous}

In this section we study existence of minimizers $\u$ of \eqref{eq:energy} in the non-viscous case, that is for $\gamma=0$. In order to give a rigorous proof, it will be necessary to reformulate the formal problem \eqref{eq:energy} as a problem for measures rather than for densities. In particular, it will be crucial to treat the singular parts of the measures in an appropriate way.

Following \cite{BeBr00}, it will be useful to replace the velocity variable $\bfv$ by the momentum variable $\bfw = \u \bfv$. Therefore, the function $\u |v|^2$ appearing in the path energy \eqref{eq:action} will be replaced by $|\bfw|^2 / \u$. The joint convexity of this function will play a crucial role in the sequel.

The argument presented here is a modification of the argument in \cite{DNS09} and our presentation follows this latter work very closely. Some additional arguments are needed to deal with the possibly varying total mass. On the other hand, some simplifications can be made, since we work on a bounded spatial  domain instead of the whole space $\R^d$.

First we reformulate the action functional \eqref{eq:action}. Let $\Omega$ be a bounded domain in Euclidean space and fix a reference measure $\Leb \in \OMeas$. In our application, the domain $\Omega$ will either be the spatial domain $\domain$ or the space-time domain $[0,1] \times \domain$, and $\Leb$ will be the corresponding Lebesgue measure. 

Let $\mu \in \OMeas$, $\bfnu \in \OSMeasd$, and $\zeta \in \OSMeas$. The Lebesgue decomposition of these measures with respect to $\Leb$ is given by
\begin{align*}
 \mu = \u \Leb + \mu^\perp\;, \qquad
 \bfnu = \bfw \Leb + \bfnu^\perp\;, \qquad
 \zeta = z \Leb + \zeta^\perp\;.
\end{align*}
Let now $\Leb^\perp \in \OMeas$ be such that $\mu^\perp, \bfnu^\perp$ and  $\zeta^\perp$ are absolutely continuous with respect to $\Leb^\perp$ (take for instance $\Leb^\perp = \mu^\perp + |\bfnu^\perp| + |\zeta^\perp|$). Then we may write  
\begin{align*}
  \mu^\perp = \u^\perp \Leb^\perp\;, \qquad
 \bfnu^\perp = \bfw^\perp \Leb^\perp \;, \qquad
 \zeta^\perp = z^\perp \Leb^\perp\;.
\end{align*}
As in the Benamou-Brenier formulation of the $2$-Wasserstein distance we consider the function $\phi : [0,\infty) \times \R^d \to [0,\infty]$ defined by
\begin{align*}
 \phi(\u,\bfw)  =  \left\{ \begin{array}{ll}
 0  
  & \u = 0 \text{ and } \bfw = 0\;,\\
 \frac{|\bfw|^2}{\u}
  & \u > 0\;,\\
 + \infty
  & \u = 0 \text{ and } \bfw \neq 0\;
.\end{array} \right.
\end{align*}
Note that $\phi$ is lower-semicontinuous, convex and 1-homogeneous.
The action functional $\cA : \OMeas \times \OSMeasd\times \OSMeas \to [0,+\infty]$ that we are interested in is given by
\begin{align*}
 \cA(\mu, \bfnu, \zeta) := \ABB(\mu, \bfnu) + \frac{1}{\penaltyPushforward}\AM(\zeta) \;, \qquad
 \ABB(\mu, \bfnu) 
 &  :=  \int_\Omega \phi(\u, \bfw) \dd \Leb
     +\int_\Omega \phi(\u^\perp, \bfw^\perp) \dd \Leb^\perp\;,\\
 \AM(\zeta) 
 &  := 
 \left\{ \begin{array}{ll}
 \int_\Omega z^2 \dd \Leb
 & \zeta^\perp = 0\;,\\
+ \infty & \zeta^\perp \neq 0\;.\end{array} \right. 
\end{align*}

Since $\phi$ is jointly $1$-homogeneous, the definition of $\ABB$ does not depend on the choice of $\Leb^\perp$. The same is true for $\AM$, since we may write $\AM(\zeta) = \int_\Omega z^2 \dd \Leb +  \int_\Omega \psi(z^\perp) \dd \Leb^\perp$, where $\psi : \R \to [0,+\infty]$ is the $1$-homogeneous function defined by $\psi(0) = 0$ and $\psi(r) = + \infty$ for $r \neq 0$.
Sometimes it will be useful to write $\cA^\Omega$ instead of $\cA$ in order to emphasize the domain $\Omega$.

The following result is an immediate consequence of general lower-semicontinuity results for integral functionals on measures \cite{AB88,AFP00}.

\begin{Proposition}[Lower semicontinuity of the functional $\cA$]\label{prop:A-lsc}
Consider weak$^*$-convergent sequences of measures 
\begin{align*}
 \mu_n \weakly^* \mu \in \OMeas\;, \qquad
 \bfnu_n \weakly^* \bfnu \in \OSMeasd\;, \qquad
 \zeta_n \weakly^* \zeta \in \OSMeas\;.
\end{align*}
Then we have
$\cA(\mu, \bfnu, \zeta)  \leq \liminf_{n \to \infty}
\cA(\mu_n, \bfnu_n, \zeta_n)\,$.
\end{Proposition}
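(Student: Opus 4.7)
The plan is to recast $\cA$ as a single convex, positively $1$-homogeneous integral functional on a vector-valued measure and then invoke a classical Reshetnyak / Bouchitt\'e--Buttazzo lower-semicontinuity theorem for such functionals (as cited in \cite{AB88,AFP00}). First I would package the three measures together with the fixed reference measure $\Leb$ into one combined vector measure
\[
\sigma_n := (\Leb, \mu_n, \bfnu_n, \zeta_n) \in \mathscr{M}(\Omega; \R^{2+d}).
\]
Since $\Leb$ is constant and the other three components converge weak-$*$ by assumption, $\sigma_n \weakly^* \sigma := (\Leb, \mu, \bfnu, \zeta)$.

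Next I would introduce the combined integrand $F : \R^{2+d} \to [0,+\infty]$ defined by
\[
F(t, \u, \bfw, z) := \phi(\u, \bfw) + \tfrac{1}{\penaltyPushforward}\,\psi(t, z),
\]
where $\psi$ is the perspective of $z \mapsto z^2$, that is $\psi(t,z) = z^2/t$ for $t>0$, $\psi(0,0)=0$, and $\psi(0,z)=+\infty$ for $z \neq 0$. Both $\phi$ and $\psi$ are perspectives of convex quadratic forms, hence jointly convex, lower-semicontinuous, and positively $1$-homogeneous; these properties transfer to their sum $F$. Choosing any $\lambda \in \OMeas$ that dominates $\sigma$ (for example $\lambda := \Leb + |\mu| + |\bfnu| + |\zeta|$) and writing the densities as $\sigma = (t,\u,\bfw,z)\,\lambda$, the $1$-homogeneity of $F$ guarantees that the integral $\int_\Omega F(t,\u,\bfw,z) \dd \lambda$ is independent of $\lambda$. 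Splitting $\lambda$ into its $\Leb$-absolutely continuous and $\Leb$-singular parts and unpacking $F$ then reproduces exactly $\ABB(\mu,\bfnu) + \frac{1}{\penaltyPushforward}\AM(\zeta)$; the key point is that $\int_\Omega \psi(0,z^\perp) \dd \Leb^\perp$ equals $0$ if $\zeta^\perp = 0$ and $+\infty$ otherwise, which correctly encodes the constraint embedded in $\AM$.

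With this representation established, the conclusion follows immediately from the lower-semicontinuity theorem for integral functionals of convex, lower-semicontinuous, positively $1$-homogeneous integrands evaluated on vector-valued Radon measures (see, e.g., \cite[Thm.~3.4.1]{AB88} or \cite[Thm.~2.38]{AFP00}), applied to the converging sequence $\sigma_n \weakly^* \sigma$ with integrand $F$. I expect the main technical obstacle to lie in this middle step: verifying joint lower semicontinuity of $F$ at the boundary points where $t=0$ or $\u=0$, and making sure that the Reshetnyak-type representation indeed reproduces the singular $+\infty$ penalty on $\zeta^\perp$ while leaving $\ABB$ correctly rewritten in perspective form. The use of the two perspective functions, together with the fact that on the absolutely continuous part $\psi(1,z) = z^2$, is precisely what makes this reformulation work and reduces the claim to a single application of a well-known lsc result.
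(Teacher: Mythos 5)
Your argument is correct, and it reaches the same destination as the paper by a mildly different route. The paper disposes of the claim in one line by applying \cite[Theorem 2.1]{DNS09} separately to $\ABB$ (whose integrand $\phi$ is already jointly $1$-homogeneous) and to $\AM$ (whose integrand $z\mapsto z^2$ is convex but not homogeneous, the singular penalty being its recession function $\psi$), and then using that a sum of weakly$^*$ lsc functionals is weakly$^*$ lsc. You instead adjoin the fixed reference measure $\Leb$ as an extra component of a single vector measure $\sigma_n=(\Leb,\mu_n,\bfnu_n,\zeta_n)$, replace $z^2$ by its perspective $\psi(t,z)=z^2/t$, and apply the Reshetnyak-type lower semicontinuity theorem once to the combined convex, lsc, positively $1$-homogeneous integrand $F$. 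This is exactly the standard homogenization trick by which results like \cite[Theorem 2.1]{DNS09} are themselves derived from the classical theory in \cite{AB88,AFP00}, so in effect you have unpacked one layer of the paper's citation; what you gain is a more self-contained reduction to a single textbook theorem, at the cost of having to verify by hand the points you correctly flag: lower semicontinuity of the perspective at $(0,0)$ and along $(0,z)$ with $z\neq 0$, the extension by $+\infty$ for $t<0$ and $\u<0$ (harmless since $\Leb$ and $\mu$ are nonnegative), and the independence of the representation from the dominating measure $\lambda$, which follows from $1$-homogeneity. Your computation that the $\psi$-term reproduces $\int_\Omega z^2\dd\Leb$ on the absolutely continuous part and the $+\infty$ penalty whenever $\zeta^\perp\neq 0$ is the right consistency check, and componentwise weak$^*$ convergence does give weak$^*$ convergence of the joint vector measure, so no gap remains.
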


\begin{proof}
The result follows, since both $\ABB$ and $\AM$ satisfy the assumptions of \cite[Theorem 2.1]{DNS09}.
\end{proof}

The following crucial lemma is a special case of \cite[Proposition 3.6]{DNS09}.

\begin{Lemma}[Integrability estimate]\label{lem:O-lemma}
Let $\mu \in \OMeas$ and $\bfnu \in \OSMeasd$.
For any Borel function $\eta : \Omega \to \R_+$ we have
\begin{align*}
 \int_\Omega \eta(x) \dd |\bfnu|(x) 
 \leq \Big(\ABB(\mu, \bfnu)\Big)^{\frac12} \bigg(\int_\Omega \eta^2 \dd \mu\bigg)^{\frac12}\;.
\end{align*}
\end{Lemma}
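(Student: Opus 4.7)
The plan is to reduce the claim to a pair of Cauchy--Schwarz inequalities on the absolutely continuous and singular parts of $\bfnu$, and then to combine them with an elementary scalar inequality.

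First I would assume without loss of generality that $\ABB(\mu,\bfnu) < +\infty$, for otherwise there is nothing to prove. Using the Lebesgue decompositions $\mu = \u\,\Leb + \u^\perp\,\Leb^\perp$ and $\bfnu = \bfw\,\Leb + \bfw^\perp\,\Leb^\perp$ with a common singular reference measure $\Leb^\perp$, the finiteness of $\ABB$ forces $\bfw = 0$ $\Leb$-a.e.\ on $\{\u = 0\}$ and $\bfw^\perp = 0$ $\Leb^\perp$-a.e.\ on $\{\u^\perp = 0\}$, since otherwise $\phi$ would take the value $+\infty$ on a set of positive measure. On the complements we have the algebraic identities
\begin{align*}
 \eta\,|\bfw| = \bigl(\eta\sqrt{\u}\bigr)\cdot\frac{|\bfw|}{\sqrt{\u}}\;,
 \qquad
 \eta\,|\bfw^\perp| = \bigl(\eta\sqrt{\u^\perp}\bigr)\cdot\frac{|\bfw^\perp|}{\sqrt{\u^\perp}}\;.
\end{align*}

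Next I would apply the classical Cauchy--Schwarz inequality to each of these products, using the $\Leb$- and $\Leb^\perp$-integrals respectively. Since the absolutely continuous and singular parts of $\bfnu$ are mutually singular, the total variation decomposes as $|\bfnu| = |\bfw|\,\Leb + |\bfw^\perp|\,\Leb^\perp$, and one obtains
\begin{align*}
 \int_\Omega \eta\dd|\bfnu|
 &= \int_\Omega \eta|\bfw|\dd\Leb + \int_\Omega \eta|\bfw^\perp|\dd\Leb^\perp \\
 &\leq \Bigl(\int_\Omega \eta^2\u\dd\Leb\Bigr)^{\frac12}\Bigl(\int_\Omega \phi(\u,\bfw)\dd\Leb\Bigr)^{\frac12}
  + \Bigl(\int_\Omega \eta^2\u^\perp\dd\Leb^\perp\Bigr)^{\frac12}\Bigl(\int_\Omega \phi(\u^\perp,\bfw^\perp)\dd\Leb^\perp\Bigr)^{\frac12}\;.
\end{align*}

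Finally, applying the discrete Cauchy--Schwarz inequality $\sqrt{ac} + \sqrt{bd} \leq \sqrt{(a+b)(c+d)}$ with $a,b,c,d\geq 0$ merges the two terms into a single product. Recognizing $\int_\Omega \eta^2\u\dd\Leb + \int_\Omega \eta^2\u^\perp\dd\Leb^\perp = \int_\Omega \eta^2\dd\mu$ and that the second factor equals $\ABB(\mu,\bfnu)$ yields the desired estimate.

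The only subtle point, which I would handle explicitly at the start, is the treatment of the zero sets $\{\u = 0\}$ and $\{\u^\perp = 0\}$: without the a priori finiteness of $\ABB(\mu,\bfnu)$ the rewriting $|\bfw| = \sqrt{\u}\cdot|\bfw|/\sqrt{\u}$ is not justified. Everything else is a routine combination of two Cauchy--Schwarz inequalities, so no further obstacles are expected.
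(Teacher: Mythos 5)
Your proposal is correct and follows essentially the same route as the paper: Cauchy--Schwarz on the absolutely continuous and singular parts separately, followed by the elementary inequality $\sqrt{ac}+\sqrt{bd}\leq\sqrt{(a+b)(c+d)}$ to merge the two terms. The only cosmetic difference is that you dispose of the degenerate sets via the a priori finiteness of $\ABB(\mu,\bfnu)$, whereas the paper restricts the integrals to $S=\{\eta>0\}$; both devices serve the same purpose.
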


\begin{proof}
Set $S := \{ x \in \Omega : \eta(x) > 0\}$.
Using the scalar inequality $\sqrt{ab} + \sqrt{cd} \leq \sqrt{a+c}\sqrt{b+d}$ which holds for $a,b,c,d \geq 0$, we obtain
\begin{align*}
 \int_S \eta(x) \dd |\bfnu|(x) 
 &   =  \int_S \eta |\bfw| \dd \Leb  
        +  \int_S \eta |\bfw^\perp| \dd \Leb^\perp 
 \\& \leq \bigg( \int_S \phi(\u, \bfw) \dd \Leb       \bigg)^{\frac12}
 \bigg( \int_S \eta^2 \u \dd \Leb   \bigg)^{\frac12}
 + 
 \bigg( \int_S \phi(\u^\perp, \bfw^\perp)  \dd \Leb^\perp   \bigg)^{\frac12}
 \bigg( \int_S \eta^2 \u^\perp \dd \Leb^\perp  \bigg)^{\frac12}
\\& \leq \bigg( \int_S \phi(\u, \bfw) \dd \Leb   
    +  \int_S \phi(\u^\perp, \bfw^\perp) \dd \Leb^\perp  \bigg)^{\frac12}
 \bigg( \int_S \eta^2 \u \dd \Leb   + 
  \int_S \eta^2 \u^\perp \dd \Leb^\perp  \bigg)^{\frac12}
\\& \leq \Big(\ABB(\mu, \bfnu)\Big)^{\frac12} \bigg(\int_S \eta^2 \dd \mu\bigg)^{\frac12}\;.
\end{align*}
\end{proof}

Let us now introduce the modified continuity equation.

\begin{Definition}[A continuity equation without conservation of mass]\label{def:cont-eq}
Let $t \mapsto \mu_t$ be weak$^*$-continuous in $\Meas$, let $t \mapsto \bfnu_t$ be Borel measurable in $\SMeasd$, and let $t \mapsto \zeta_t$ be Borel measurable in $\SMeas$.
We say that the triple $(\mu_t, \bfnu_t, \zeta_t)_{t \in [0,1]}$ satisfies the continuity equation (and write $(\mu_t, \bfnu_t, \zeta_t)_{t \in[0,1]} \in \CE[0,1]$) if 
\begin{enumerate}
\item the following integrability conditions hold:
\begin{align*}
 \int_0^1 |\bfnu_t|(\domain) \dd t < \infty\;, \qquad
 \int_0^1 |\zeta_t|(\domain) \dd t < \infty\;; 
\end{align*}
\item the modified continuity equation $\partial_t \mu + \div(\bfnu) = \zeta$ holds in the sense of distributions, \ie, for all space-time test functions $\eta \in C^1_0( (0,1) \times \overline{\domain})$ we have
\begin{align*}\label{eq:CE-dist}
  \int_0^1 \int_\domain \partial_t \eta(t,x) \dd \mu_t(x) \dd t 
    + \int_0^1 \int_\domain \nabla \eta(t,x) \cdot \dd \bfnu_t(x) \dd t 
    + \int_0^1 \int_\domain \eta(t,x) \dd \zeta_t(x) \dd t  
    = 0\;.
\end{align*}
\end{enumerate}
\end{Definition}

A standard approximation argument (see \cite[Lemma 4.1]{DNS09}) shows that solutions to $\CE[0,1]$ satisfy, for all $0 \leq t_0 \leq t_1 \leq 1$,
\begin{equation}\begin{aligned}\label{eq:CE-2}
     \int_\domain \eta(t_1,x) d \mu_{t_1}(x)
  -  \int_\domain \eta(t_0,x) d \mu_{t_0}(x) 
  & = \int_{t_0}^{t_1} \int_\domain \partial_t \eta(t,x) \dd \mu_t(x) \dd t  + \int_{t_0}^{t_1} \int_\domain \nabla \eta(t,x) \cdot \dd \bfnu_t(x) \dd t 
 \\& \qquad + \int_{t_0}^{t_1}  \int_\domain \eta(t,x) \dd \zeta_t(x) \dd t
\end{aligned}\end{equation}
for all space-time test functions  
$\eta \in C^1( [0,1] \times \overline{\domain})$. In particular, taking $\eta(t,x) \equiv 1$, it follows that the increase of mass is given by
\begin{align}\label{eq:mass-increase}
 \mu_{t_1}(\domain) - \mu_{t_0}(\domain) 
  = \int_{t_0}^{t_1} \zeta_t(\domain) \dd t\;.
\end{align}

We are now in a position to rigorously define the extended distance $\Wd$ that was formally introduced in \eqref{eq:newtransport}.

\begin{Definition}\label{def:rigorous}
For $\mu_A, \mu_B \in \Meas$ we define $\Wd(\mu_A,\mu_B) \in [0,+\infty]$ by
\begin{align}\label{eq:def-Wd}
 \Wd(\mu_A, \mu_B)
    := \inf_{\mu,\bfnu,\zeta} \bigg\{ \bigg(\int_0^1  \cA(\mu_t, \bfnu_t, \zeta_t)      \dd t \bigg)^{1/2} \ : \ (\mu_t, \bfnu_t, \zeta_t)_{t \in[0,1]} \in \CE[0,1]\;,  \ \mu_0 = \mu_A\;, \ \mu_1 = \mu_B  \bigg\}\;.
\end{align}
\end{Definition}

The following theorem is the main result of this section.
\begin{Theorem}[Existence of geodesics]\label{thm:geod}
Let $\penaltyPushforward \in (0,\infty)$ and 
take $\mu_A, \mu_B \in \Meas$ with $\Wd(\mu_A, \mu_B) < \infty$. Then there exists a minimizer $(\overline\mu_t, \overline\bfnu_t, \overline\zeta_t)_{t \in[0,1]}$ that realizes the infimum in \eqref{eq:def-Wd}. Moreover, the associated curve $(\overline\mu_t)_{t \in[0,1]}$ is a constant speed geodesic for $\Wd$, \ie, 
\begin{align*}
 \Wd(\overline\mu_s, \overline\mu_t) = |s-t| 
 \Wd(\mu_A, \mu_B) 
\end{align*}
for all $s,t \in [0,1]$. Furthermore, we have the alternative characterisation
\begin{align*}
  \Wd(\mu_A, \mu_B)
    := \inf_{\mu,\bfnu,\zeta} \bigg\{ \int_0^1  \sqrt{\cA(\mu_t, \bfnu_t, \zeta_t)  }    \dd t \ : \ (\mu_t, \bfnu_t, \zeta_t)_{t \in[0,1]} \in \CE[0,1]\;,  \ \mu_0 = \mu_A\;, \ \mu_1 = \mu_B  \bigg\}\;.
\end{align*}
\end{Theorem}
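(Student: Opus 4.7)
The strategy is the direct method: given a minimizing sequence $(\mu^n_t, \bfnu^n_t, \zeta^n_t)_{n\in\N}$, extract weak$^*$ limits, verify admissibility, and pass to the limit using the lower semicontinuity statement in Proposition \ref{prop:A-lsc}. I will treat the three assertions (existence, constant speed, square-root formulation) in a coupled way, since the reparametrization step yields both the constant-speed property and the equivalence of the two variational formulations.

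\textbf{Step 1: A priori bounds from Lemma \ref{lem:O-lemma}.} Fix a minimizing sequence with $\int_0^1 \cA(\mu^n_t,\bfnu^n_t,\zeta^n_t)\dd t \to \Wd(\mu_A,\mu_B)^2$. Applying Lemma \ref{lem:O-lemma} with $\eta\equiv 1$ on $\Omega=\domain$ gives
\begin{equation*}
|\bfnu^n_t|(\domain)\le \bigl(\ABB(\mu^n_t,\bfnu^n_t)\bigr)^{1/2}\,\mu^n_t(\domain)^{1/2},
\end{equation*}
while for the source term $|\zeta^n_t|(\domain)\le |\domain|^{1/2}\bigl(\AM(\zeta^n_t)\bigr)^{1/2}$ by Cauchy-Schwarz (note that finite action forces $(\zeta^n_t)^\perp=0$). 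Combining these bounds with the mass balance \eqref{eq:mass-increase} I obtain a Gronwall-type estimate yielding uniform bounds on $\sup_t \mu^n_t(\domain)$, and hence on $\int_0^1 |\bfnu^n_t|(\domain)\dd t$ and $\int_0^1 |\zeta^n_t|(\domain)\dd t$, all controlled by $\mu_A(\domain)$, $|\domain|$, $\delta$ and the action.

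\textbf{Step 2: Compactness and equicontinuity.} The uniform mass bound gives weak$^*$ compactness of $\mu^n_t$ in $\Meas$ for each $t$. For equicontinuity in $t$, I use \eqref{eq:CE-2}: for $\eta\in C^1(\overline\domain)$ and $s<t$,
\begin{equation*}
\Bigl|\int_\domain\eta\dd\mu^n_t-\int_\domain\eta\dd\mu^n_s\Bigr|
\le \|\nabla\eta\|_\infty\int_s^t |\bfnu^n_r|(\domain)\dd r + \|\eta\|_\infty\int_s^t |\zeta^n_r|(\domain)\dd r.
\end{equation*}
Via Cauchy-Schwarz in $r$ the right-hand side is bounded by $C(t-s)^{1/2}$ uniformly in $n$, so $t\mapsto\mu^n_t$ is equi-H\"older in the narrow/bounded-Lipschitz topology. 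A Ascoli-Arzel\`a argument (as in Lemma 4.1 of \cite{DNS09}) extracts a subsequence with $\mu^n_t \weakly^* \overline\mu_t$ for every $t\in[0,1]$, with $t\mapsto\overline\mu_t$ weak$^*$-continuous. The uniform $L^1$-in-time bounds on $|\bfnu^n_t|$ and $|\zeta^n_t|$ give weak$^*$ convergence of the space-time measures $\bfnu^n_t\dd t \weakly^* \overline\bfnu_t\dd t$ and $\zeta^n_t\dd t \weakly^* \overline\zeta_t\dd t$ in $\STMeasd$ and $\STMeas$ respectively, after a further subsequence.

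\textbf{Step 3: Passage to the limit.} The continuity equation in Definition \ref{def:cont-eq} is linear in $(\mu,\bfnu,\zeta)$, so it survives weak$^*$ convergence; hence the limit triple lies in $\CE[0,1]$ with the correct endpoints. Applying Proposition \ref{prop:A-lsc} on the space-time domain $\Omega=[0,1]\times\domain$ (with $\Leb$ the space-time Lebesgue measure) gives
\begin{equation*}
\int_0^1\cA(\overline\mu_t,\overline\bfnu_t,\overline\zeta_t)\dd t\le\liminf_{n\to\infty}\int_0^1\cA(\mu^n_t,\bfnu^n_t,\zeta^n_t)\dd t = \Wd(\mu_A,\mu_B)^2,
\end{equation*}
so the limit is a minimizer.

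\textbf{Step 4: Constant speed geodesic and alternative characterization.} Both properties follow from a reparametrization argument. For any admissible $(\mu_t,\bfnu_t,\zeta_t)\in\CE[0,1]$ and any strictly increasing $C^1$-diffeomorphism $\tau:[0,1]\to[0,1]$, the rescaled triple $(\tilde\mu_s,\tilde\bfnu_s,\tilde\zeta_s)=(\mu_{\tau(s)},\tau'(s)\bfnu_{\tau(s)},\tau'(s)\zeta_{\tau(s)})$ again solves $\CE[0,1]$ with the same endpoints, and by the $1$-homogeneity of $\phi$ and $z^2$ (in the joint scaling) one computes $\cA(\tilde\mu_s,\tilde\bfnu_s,\tilde\zeta_s)=\tau'(s)^2\cA(\mu_{\tau(s)},\bfnu_{\tau(s)},\zeta_{\tau(s)})$, whence
\begin{equation*}
\int_0^1\sqrt{\cA(\tilde\mu_s,\tilde\bfnu_s,\tilde\zeta_s)}\dd s = \int_0^1\sqrt{\cA(\mu_t,\bfnu_t,\zeta_t)}\dd t,
\end{equation*}
so the length functional $\int_0^1\sqrt{\cA}\dd t$ is reparametrization invariant. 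Cauchy-Schwarz gives $\bigl(\int_0^1\sqrt{\cA}\dd t\bigr)^2\le\int_0^1\cA\dd t$ with equality iff $\cA$ is a.e.\ constant in $t$, and choosing $\tau'(s)$ proportional to $1/\sqrt{\cA(\mu_{\tau(s)},\cdot)}$ realizes equality. This proves the alternative characterization and shows any minimizer of the action is traversed at constant $\cA$-speed; applied on subintervals $[s,t]$ this yields $\Wd(\overline\mu_s,\overline\mu_t)=|t-s|\Wd(\mu_A,\mu_B)$.

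\textbf{Main obstacle.} The delicate point is Step 2: establishing pointwise-in-$t$ weak$^*$ convergence of $\mu^n_t$ (not merely in space-time), since only then can one identify the endpoint marginals of the limit with $\mu_A,\mu_B$. This requires combining the continuity equation with the action bounds via Lemma \ref{lem:O-lemma} to get H\"older equicontinuity, and then running Ascoli-Arzel\`a on the separable test-function space. Handling the singular parts $\mu^\perp,\bfnu^\perp$ correctly throughout, so that the lower semicontinuity of Proposition \ref{prop:A-lsc} applies on the space-time domain rather than slice by slice, is the second subtle point.
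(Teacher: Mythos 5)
Your proposal is correct and follows essentially the same route as the paper, which implements the compactness argument of Dolbeault--Nazaret--Savar\'e (Proposition \ref{prop:compact}): a priori mass and momentum bounds via Lemma \ref{lem:O-lemma} and \eqref{eq:mass-increase}, weak$^*$ compactness of the space-time measures with disintegration, pointwise-in-$t$ convergence of $\mu^n_t$, and lower semicontinuity of $\cA$ on the space-time domain; the constant-speed and square-root statements are handled by the same reparametrization argument the paper delegates to \cite[Theorem 5.4]{DNS09}. The only cosmetic differences are that the paper obtains pointwise-in-$t$ convergence by passing to the limit directly in the integrated identity \eqref{eq:CE-2} (using uniform integrability of $t\mapsto|\bfnu^n_t|(\domain)$ to test against $\chi_{[0,\tau]}$) rather than via Ascoli--Arzel\`a, and that no Gronwall argument is needed since the source bound $\int_0^1|\zeta^n_t|(\domain)\dd t$ follows from the action bound independently of the mass.
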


\begin{proof}
The existence of a minimizer is an immediate consequence of Proposition \ref{prop:compact} below. The remaining statements follow by standard arguments, see \cite[Theorem 5.4]{DNS09} for details.
\end{proof}

Let us now state and prove the main ingredient for the proof of Theorem \ref{thm:geod}. We write $\int_0^1  \delta_t \otimes \mu_t \dd t$ to denote the measure $\mu$ on $[0,1] \times D$ satisfying
\begin{align*}
 \int_{[0,1] \times \domain} \eta(t,x) \dd \mu(t,x) = 
\int_0^1  \int_{\domain}  \eta(t,x) \dd \mu_t(x) \dd t
\end{align*}
for all $\eta \in C([0,1] \times \domain)$.

\begin{Proposition}[Compactness for solutions to the continuity equation with bounded action]\label{prop:compact}
Suppose that\\ $(\mu_t^n, \bfnu_t^n, \zeta_t^n)_{t \in(0,1)} \in \CE[0,1]$ satisfy 
\begin{enumerate}
\item[(A1)] $M_1 := \sup_n \mu_0^n(\domain) < \infty$;
\item[(A2)] $M_2 := \sup_n  \int_0^1 \cA(\mu_t^n, \bfnu_t^n, \zeta_t^n) \dd t < \infty$.
\end{enumerate}
Set $\bfnu^n := \int_0^1 \delta_t \otimes \bfnu_t^n \dd t \in \STMeasd$ and $\zeta^n := \int_0^1  \delta_t \otimes \zeta_t^n \dd t \in \STMeas$.
Then, there exists a subsequence (again indexed by n) and a triple $(\mu_t, \bfnu_t, \zeta_t)_{t \in(0,1)} \in \CE[0,1]$ such that 
\begin{enumerate}
\item $\mu_t^n \weakly^* \mu_t$ in $\Meas$ for all $t \in [0,1]$;
\item $\bfnu^n \weakly^* \bfnu$ in $\STMeasd$;
\item $\zeta^n \weakly^* \zeta$ in $\STMeas$.
\end{enumerate}
Moreover, for the above subsequence
\begin{align}\label{eq:lsc}
 \int_0^1 \cA(\mu_t, \bfnu_t, \zeta_t) \dd t
   \leq \liminf_{n \to \infty}
    \int_0^1 \cA(\mu_t^n, \bfnu_t^n, \zeta_t^n) \dd t\;.
\end{align}
\end{Proposition}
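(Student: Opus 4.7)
The strategy follows \cite{DNS09} closely, the new ingredient being the source term $\zeta$ and the fact that total mass is no longer conserved. I would proceed in four steps: a priori bounds, compactness, passage to the limit in the continuity equation, and lower semicontinuity.

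\textbf{Step 1 (a priori bounds).} Since (A2) forces $\AM(\zeta_t^n) < \infty$ for a.e.~$t$, the singular part of $\zeta_t^n$ vanishes and $\zeta_t^n = z_t^n \Leb$ with $\int_0^1 \int_\domain (z_t^n)^2 \dd x \dd t \leq \penaltyPushforward M_2$. Two applications of Cauchy--Schwarz (first in $x$, then in $t$) yield a uniform bound on $\int_0^1 |\zeta_t^n|(\domain) \dd t$. Combined with the mass balance \eqref{eq:mass-increase} applied between $0$ and an arbitrary $t$, this gives a uniform bound $\sup_{n,t} \mu_t^n(\domain) \leq M_1 + C$. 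Using Lemma~\ref{lem:O-lemma} with $\eta \equiv 1$ then yields $|\bfnu_t^n|(\domain) \leq \ABB(\mu_t^n,\bfnu_t^n)^{1/2} \mu_t^n(\domain)^{1/2}$; integrating in $t$ and using Cauchy--Schwarz together with (A2) provides a uniform bound on $|\bfnu^n|([0,1]\times\domain)$, and the same strategy bounds $|\zeta^n|([0,1]\times\domain)$.

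\textbf{Step 2 (compactness).} Banach--Alaoglu on $\STMeasd$ and $\STMeas$ yields subsequences with $\bfnu^n \weakly^* \bfnu$ and $\zeta^n \weakly^* \zeta$. For the pointwise-in-$t$ convergence of $\mu_t^n$, I fix a countable dense set $\{\eta_k\} \subset C^1(\overline{\domain})$ and use \eqref{eq:CE-2} (with the test function $\eta_k$ extended trivially in time) to show that $t \mapsto \int_\domain \eta_k \dd \mu_t^n$ has uniformly bounded variation, controlled by $\|\nabla \eta_k\|_\infty |\bfnu^n|([0,1]\times\domain) + \|\eta_k\|_\infty |\zeta^n|([0,1]\times\domain)$. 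A Helly--type selection (diagonal extraction) then gives pointwise limits $F_k(t)$ for every $t \in [0,1]$, and the uniform mass bound lets me identify $F_k(t) = \int_\domain \eta_k \dd\mu_t$ for a weak$^*$-limit $\mu_t$. Density of $\{\eta_k\}$ upgrades this to $\mu_t^n \weakly^* \mu_t$ for every $t$.

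\textbf{Step 3 (limit lies in $\CE[0,1]$).} Pass to the limit in the distributional formulation of the continuity equation: the first term $\int_0^1\int_\domain \partial_t \eta \dd\mu_t^n\dd t$ converges by pointwise weak$^*$ convergence of $\mu_t^n$ together with dominated convergence (the total mass is uniformly bounded), while the other two terms converge by the weak$^*$-convergence of $\bfnu^n$ and $\zeta^n$ in $\STMeasd$ and $\STMeas$. The integrability conditions for $\bfnu_t$ and $\zeta_t$ follow by disintegrating the limit measures against $\dd t$ and applying Fubini, combined with the total-variation bounds from Step 1.

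\textbf{Step 4 (lower semicontinuity).} I lift to the space--time domain $\Omega := [0,1] \times \domain$ by setting $\mu^n := \int_0^1 \delta_t \otimes \mu_t^n \dd t$ and $\mu := \int_0^1 \delta_t \otimes \mu_t \dd t$; pointwise weak$^*$ convergence of $\mu_t^n$ plus the uniform mass bound yields $\mu^n \weakly^* \mu$ in $\STMeasp$. Applying Proposition~\ref{prop:A-lsc} on $\Omega$ with reference measure $\dd t \otimes \dd x$ gives $\cA^\Omega(\mu,\bfnu,\zeta) \leq \liminf_n \cA^\Omega(\mu^n,\bfnu^n,\zeta^n)$. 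It remains to check the disintegration identity $\cA^\Omega(\mu,\bfnu,\zeta) = \int_0^1 \cA^\domain(\mu_t,\bfnu_t,\zeta_t)\dd t$, which I expect to be the main technical obstacle: one has to handle the singular parts with respect to $\dd t \otimes \dd x$ carefully and show that they are compatible, via $1$-homogeneity of $\phi$ and $\psi$, with a slice-by-slice evaluation of the action. Granted this, \eqref{eq:lsc} follows. The same identity applied to the approximating sequence together with (A2) and Fatou's lemma closes the argument.
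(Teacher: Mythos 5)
Your proposal is correct and follows essentially the same route as the paper: the same a priori bounds (vanishing singular part of $\zeta_t^n$, the mass balance \eqref{eq:mass-increase}, Lemma \ref{lem:O-lemma} with $\eta \equiv 1$), weak$^*$ compactness and disintegration of the space--time measures $\bfnu^n,\zeta^n$, passage to the limit in the continuity equation, and lower semicontinuity by lifting to $[0,1]\times\domain$ and applying Proposition \ref{prop:A-lsc}. The only differences are minor: the paper gets pointwise-in-$t$ convergence of $\mu_t^n$ directly from the integrated identity \eqref{eq:CE-2} rather than via a Helly selection, and it records the uniform $L^2(0,1)$ bound on $t\mapsto|\bfnu_t^n|(\domain)$ (not merely the total-variation bound), because the resulting uniform integrability is exactly what makes the time-marginal of the limit $\bfnu$ absolutely continuous and hence justifies the disintegration $\bfnu=\int_0^1\delta_t\otimes\bfnu_t\dd t$ --- a point you should make explicit in Steps 2--3.
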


\begin{proof}
In view of (A2), we first observe that
\begin{align*}
 M_3 := \sup_n \int_{0}^{1} |\zeta_t^n|(D) \dd t  
   = \sup_n \int_{0}^{1} \int_\domain |z_t^n(x)| \dd x \dd t 
   < \infty\;.
\end{align*}
Therefore, \eqref{eq:mass-increase} yields the uniform bound
\begin{align*}
 \mu_t^n(\domain) \leq 
  \mu_0^n(\domain) +  
 \int_{0}^{t} |\zeta_s^n|(\domain) \dd s \leq M_1 + M_3
\end{align*}
for all $n$.
Moreover, Lemma \ref{lem:O-lemma} implies that 
\begin{align*}
 |\bfnu_t^n|(\domain) \leq \sqrt{\mu_t^n(\domain)\ABB(\mu_t^n,\bfnu_t^n) }\;,
\end{align*}
hence by the H\"older inequality we obtain 
\begin{align*}
\int_0^1 |\bfnu_t^n|(\domain)^2 \dd t 
   \leq \int_0^1 \mu_t^n(\domain) \cA(\mu_t^n, \bfnu_t^n, \zeta_t^n) \dd t
   \leq M_2 (M_1 + M_3)\;,
\end{align*}
which shows that the maps $\{t \mapsto |\bfnu_t^n|(\domain)\}_n$ are uniformly bounded in $L^2(0,1)$, hence uniformly integrable.

Since $|\bfnu^n|([0,1] \times \domain) \leq (\int_0^1 |\bfnu_t^n|(\domain)^2 \dd t)^{\frac12}$, the measures $\{\bfnu^n\}_n \in \STMeasd$ have uniformly bounded total variation on $[0,1] \times \domain$, hence we can extract a subsequence that converges weakly$^*$ to some measure $\bfnu \in \STMeasd$. The uniform integrability of $\{t \mapsto |\bfnu_t^n|(\domain)\}_n$ implies that the image measure of $\bfnu$ under the mapping $(t,x) \mapsto t$ is absolutely continuous with respect to the Lebesgue measure on $[0,1]$. Therefore, the disintegration theorem (see, \eg, \cite[Theorem 5.3.1]{ambrosio2006gradient}) allows us to write $\bfnu = \int_0^1 \delta_t \otimes \bfnu_t \dd t$ for some family of measures $\{\bfnu_t\}_{t \in [0,1]} \in \SMeasd$.

Fix $0 \leq \tau \leq 1$, take $\eta \in C^1(\domain)$, and set $\bar \xi(t,x) := \nabla\eta(x)\chi_{[0,\tau]}(t)$. 
Although $\bar\xi$ is discontinuous, general approximation results (see \cite[Proposition 5.1.10]{ambrosio2006gradient}) imply that 
\begin{align}\label{eq:nu-conv}
 \int_{0}^{\tau} \int_\domain \nabla \eta(x) \dd \bfnu_t^n(x) \dd t
  =   \int_{[0,1] \times \domain} \bar \xi(t,x) \dd \bfnu^n(t,x)
  \to \int_{[0,1] \times \domain} \bar \xi(t,x) \dd \bfnu(t,x)
  = \int_{0}^{\tau} \int_\domain \nabla \eta(x) \dd \bfnu_t(x) \dd t\;.
\end{align}

Let us now consider the term involving $\zeta_t^n$, which is treated similarly. 
For all $n$ and a.e. $t \in [0,1]$ we use (A2) to conclude that  $\zeta_t^n = z_t^n \Leb$. Therefore we obtain
\begin{align*}
 \int_0^1 |\zeta_t^n|(\domain)^2 \dd t 
   = \int_0^1 \bigg( \int_\domain |z_t^n(x)| \dd x \bigg)^2 \dd t 
   < \infty\;.
\end{align*}
As above, we infer that the mappings $\{t \mapsto |\zeta_t^n|(\domain)\}_n$ are uniformly integrable, and that there exists a subsequence of $\{\zeta^n\}_n$ that convergence weakly$^*$ to some measure $\zeta\in \STMeas$. By the disintegration theorem we may write $\zeta = \int_0^1 \delta_t \otimes \zeta_t \dd t$ for a family of measures $\{\zeta_t\}_{t \in [0,1]} \in \SMeas$.
Set $\tilde\xi(t,x) := \eta(x) \chi_{[0,\tau]}(t)$. Arguing as above, we obtain
\begin{align}\label{eq:zeta-conv}
  \int_{0}^{\tau} \int_\domain \eta(x) \dd \zeta_t^n(x) \dd t
  =   \int_{[0,1] \times \domain} \tilde\xi(t,x)\dd \zeta^n(t,x)
  \to \int_{[0,1] \times \domain} \tilde\xi(t,x) \dd \zeta(t,x)
  = \int_{0}^{\tau} \int_\domain \eta(x) \dd \zeta_t(x) \dd t\;.
\end{align}

We are now in a position to obtain subsequential convergence of $\{\mu_t^n\}_n$. Indeed, it follows from \eqref{eq:CE-2} that
\begin{align*}
      \int_\domain \eta(x) d \mu_{\tau}^n(x)
  =  \int_\domain \eta(x) d \mu_{0}^n(x) 
   +  \int_0^{\tau} \int_\domain \nabla \eta(x) \cdot \dd \bfnu_t^n(x) \dd t + \int_{0}^{\tau}  \int_\domain \eta(x) \dd \zeta_t^n(x) \dd t\;.
\end{align*}
Moreover, (A1) implies that there exists a measure $\mu_0 \in \Meas$ such that $\mu_0^n \weakly^* \mu_0$ (after passing to a subsequence). In view of \eqref{eq:nu-conv} and \eqref{eq:zeta-conv} the latter equation implies weak$^*$-convergence of $\{\mu_\tau^n\}_n$ to some measure $\mu_\tau$ for every $\tau \in [0,1]$. It is readily checked that $(\mu_t, \bfnu_t, \zeta_t)_t \in \CE[0,1]$.

It remains to prove \eqref{eq:lsc}.
For this purpose we write $\mu = \int_0^1 \delta_t \otimes \mu_t \dd t$ and $\mu^n = \int_0^1  \delta_t \otimes \mu_t^n \dd t$. 
It is straightforward to check that $\mu^n$ converges weakly$^*$ to $\mu$ in $\STMeasp$. Now the result follows by observing that 
\begin{align*}
 \int_0^1 \cA^\domain(\mu_t, \bfnu_t, \zeta_t) \dd t
  = \cA^{[0,1] \times \domain}(\mu, \bfnu, \zeta)\;,
\end{align*}
and applying Proposition \ref{prop:A-lsc} to $\cA^{[0,1]\times \domain}$.
\end{proof}

\section{A variational time discretization}\label{sec:timediscrete}
In what follows, we derive a time discrete approximation of the energy \eqref{eq:energy} and thereby a variational approach for the definition 
of geodesic paths. We refer to \cite{WiBaRu10} for the general concept and to \cite{RuWi12b} for the numerical analysis in the context of shape spaces which are Hilbert manifolds and 
in \cite{BeEf14} a variational time discretization of geodesics in the metamorphosis model is discussed.

As a motivation let us briefly present a toy model in finite dimensions. 
On a smooth $m$-dimensional manifold $\mathcal{M}$ embedded in $\R^d$ ($m\leq d$) we consider the simple energy $\F[y, \tilde y] = |\tilde y - y|^2$
which reflects the stored elastic energy in a spring spanned between points $y$ and $\tilde y$ through the ambient space of $\mathcal{M}$ in $\R^d$.
The smoothness of $\mathcal{M}$ implies that $\F[y, \tilde y]  = \mbox{dist}_{\mathcal{M}}(y,\tilde y)^2 + O( \mbox{dist}_{\mathcal{M}}(y,\tilde y)^3)$, where 
$\mbox{dist}_{\mathcal{M}}(y,\tilde y)$ denotes the Riemannian distance between $y$ and $\tilde y$.
Hence, we can approximate the path length of a smooth path $(y(t))_{t\in [0,1]}$ via sampling $y_k = y(\frac{k}{K})$ and then evaluate the discrete path energy
$$
\EK[y_0,\ldots, y_K] = K \sum_{k=1}^K |y_{k}-y_{k-1}|^2\,,
$$
such that $\EK[y_0,\ldots, y_K]$ converges to $\E[(y(t))_{t\in [0,1]}]  = \int_0^1 |\dot y(t)|^2 \diff t$ for $K\to \infty$. Here, we use that $\frac{y_k-y_{k-1}}{\tau}$
is an approximation of the velocity $\dot y(k/K)$ where $\tau = \frac1K$ is the time step size of our discretization on the time interval $[0,1]$.
In fact, based on the approximation $\F$ of the squared distance 
$\mbox{dist}_{\mathcal{M}}$, which is easy to implement, we obtain an effective approximation of the Riemannian path energy 
$\E$. Correspondingly, we call a minimizer $(y_0,\ldots, y_K)$ of the discrete path energy for fixed $y_0$ and $y_K$ a discrete geodesic.
We refer to \cite{RuWi12b,BeEf14} for a detailed discussion, why the discrete path energy instead of the discrete path length is the right concept to compute discrete geodesics. In particular, $\Gamma$-convergence of the discrete path energy is proven in case of the metamorphosis model in \cite{BeEf14} and
under suitable assumptions in the context of Hilbert manifolds in \cite{RuWi12b}.

Now, we ask for a similar time discrete approximation of the continuous path energy $\Edg$ defined in \eqref{eq:energy}.
To this end, we consider a discrete path $(\u_0, \ldots, \u_K)$ in the space of image intensities with $\u_k\in \Imagespace$
 for $k=1,\ldots, K$ with $\Imagespace  := L^2(\domain,\R_{\geq 0})$
and ask for a matching functional $\F$ on consecutive pairs $\u$, $\tilde \u$ of image intensities. In fact, this matching functional should reflect time discrete counterparts of 
all three ingredients of the metric $\Gdg$ and the induced continuous path energy $\Edg$, namely the transport cost, the viscous dissipation and the source term.
Like in the original Monge problem we take into account deformations $\phi$ in a suitable space $\deformationSpace$ of admissible deformations, 
to be defined later, and optimize for given $\u$, $\tilde \u$
a suitable functional  $\Fd[\u, \tilde \u, \cdot]$ over all admissible deformations
to define the value of the matching functional $\F[\u, \tilde \u]$, \ie 
\begin{align*}
 \F[\u, \tilde \u] = \inf_{ \phi \in \mathcal{A} }  \Fd[\u, \tilde \u, \phi] \,.
\end{align*}
With the matching functional at hand we then define the discrete path energy summing over applications of the matching functional to consecutive pairs of image intensities $\u_{k-1}$ and $\u_k$ 
of a discrete path $(\u_0, \ldots, \u_K)$ and get
\beqn\label{eq:discreteenergy}
\EdgK[\u_0,\ldots, \u_K] = K \sum_{k=1}^K \F[\u_{k-1}, \u_k]\,.
\eeqn
Thus, the resulting time discrete approximation of the squared Riemannian distance is given by
\beqn\label{eq:WdgK}
\WdgK[\u_A,\u_B]^2 = \min_{\substack{ \u_0, \ldots, \u_K \in \Imagespace \\ \u_0 = \u_A,\; \u_K = \u_B}} \EdgK[\u_0,\ldots, \u_K]\,.
\eeqn
Here, we assume $\u_A, \u_B \in \Imagespace$.
In what follows we list now the appropriate components of $\F$ reflecting the different ingredients of the continuous path energy.

{\bf Approximation of the transport cost.} 
To approximate the first term in the metric \eqref{eq:action} we make use of the equivalence of the original 
Monge problem and the Benamou Brenier formulation \cite{BeBr00} of optimal transport and define  
\beqn\label{eq:discreteTC}
 \Fdt[\u, \phi] = \int_\domain  | \phi - \id |^2  \u \diff x \,.
\eeqn
Here, $\frac{\phi-\id}{\tau}$ is an approximation of the transport velocity with $\Id$ being the identity deformation.

{\bf Approximation of the density modulation cost.}
For a diffeomorphism $\phi$ the push forward condition $\phi_{\#} (\u \Leb) =  \tilde \u \Leb$ can be expressed as 
\begin{align*}
 \u = \det( D \phi )  \tilde \u \circ \phi \,.
\end{align*}
As an approximation of the source term $z=\partial_t \u + \div (v \u)$ we take into account 
\beqn
 \Fdz[\u, \tilde \u, \phi] = \int_\domain \frac{1}{\penaltyPushforward} | \det( D \phi ) \tilde \u \circ \phi - \u |^2 \diff x \,.
 \nonumber
 \label{eq:EnergyWDZ}
\eeqn
{\bf Approximation of the dissipation cost.}
By Rayleigh's paradigm~\cite{St45} one
derives models for viscous dissipation from elastic energies replacing
elastic strains by strain rates. We proceed as in \cite{BeEf14}, where a time discretization of the metamorphosis model was investigated 
and define 
\begin{align*}
\Fdv[\phi] = \gamma \int_\domain   W(D\phi) + \epsilon |D^m \phi|^2 \diff x \,.
\label{eq:EnergyWDV}
\end{align*}
Here, $W$ is a hyper elastic energy density and 
the higher order term $|D^m \phi|^2$ acts as a regularizing term for some small $\epsilon >0$ and enforces the deformations to be in the space $W^{m,2}$. We make the following assumptions on $W$ (cf. also \cite{BeEf14}):
\setcounter{counter}{0}
\begin{list}{(W\arabic{counter})}{\usecounter{counter}}
  \item $W$ is non-negative and polyconvex,
  \item $W(A)\geq \alpha_0 | \log \det A | - \alpha_1$ for $\alpha_0,\alpha_1>0$, and every invertible matrix $A$ with $\det A >0$, $W(A) = \infty$ for $\det A \leq 0$, and 
  \item $W$ is sufficiently smooth and the following consistency assumptions with respect to the differential operator $L$ hold true:
  $W(\Id) = 0$, $DW(\Id) = 0$ and  $\frac12 D^2 W(\Id)(B,B)=\frac{\lambda}{2}(\tr B)^2+\mu \tr((\frac{B+B^T}{2})^2)$  for all $B \in \R^{d,d}$.
\end{list}
Due to the incorporation of this dissipation energy we finally define the space of admissible deformations over which we minimize in the definition of $\F[\u, \tilde \u]$ as 
\begin{align*} 
 \deformationSpace = \left\{ \phi \in W^{m,2}(\domain,\domain) \, : \, \det( D \phi ) > 0 \; \text{ a.e. in } \domain, \phi = \Id \; \text{ on } \partial D \right\} \, ,
\end{align*}
We assume that $m > 1 + \frac{d}{2}$, which implies by Sobolev embedding that the admissible deformations are diffeomorphisms.
Given these energy contributions we can define the compound energy 
\beqn\label{eq:F}
\Fd[\u, \tilde \u, \phi] = \Fdt[\u, \phi] + \Fdz[\u, \tilde \u, \phi] + \Fdv[ \phi]\,.
 \nonumber
\eeqn
The following interpolation results justifies our choice of the time discrete path energy.
\begin{Theorem}[Consistency of the discrete path energy]
For a convex domain $\domain$ and a sufficiently smooth path of image intensities $(\u(t))_{t\in [0,1]}$
with $\u\geq 0$ a.e. in $[0,1]\times \domain$
and a sufficiently smooth family of  velocities $(v(t))_{t\in [0,1]}$ 
we consider interpolated images $\u^K_k = \u(\tfrac{k}{K})$ and motion fields $v^K_k=v(\tfrac{k}{K})$.
Then the resulting extended path energy
\beqn\label{EdgKd}
\EdgKd[\u^K_0,\ldots, \u^K_K, \phi^K_1,\ldots, \phi^K_K] := \sum_{k=1}^K \Fd[\u^K_{k-1}, \u^K_k,\phi^K_k]
\eeqn
with $\phi^K_k = \tfrac{1}{K} v^K_k + \id$ converges to the corresponding continuous path energy
\beqn\label{Edgd}
\Edgd[\u,v] := \int_0^1 \int_\domain  \u |v|^2  + \frac1\penaltyPushforward z^2 + \gamma L[v,v] \diff x \diff t\,.
\nonumber
\eeqn
\end{Theorem}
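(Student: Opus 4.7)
The plan is to treat the three constituents $\Fdt$, $\Fdz$, $\Fdv$ of $\Fd$ separately, Taylor-expand each in the step size $\tau = 1/K$, identify the leading order coefficient with the corresponding integrand in $\Edgd$, and then conclude via a standard Riemann-sum argument using the smoothness hypothesis on $\u$ and $v$. Throughout, the factor from the outer sum combines with a factor $\tau^2$ from the expansions to produce Riemann sums of mesh width $\tau$ for the time integral. (We read the sum as being scaled by $K$ so as to be dimensionally consistent with the continuous energy, as in the definition of $\EdgK$.)

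\textbf{Transport term.} Since $\phi^K_k - \id = \tfrac{1}{K} v^K_k$ exactly, we get directly
\[
  \Fdt[\u^K_{k-1},\phi^K_k] = \frac{1}{K^2}\int_\domain \u^K_{k-1}\, |v^K_k|^2 \diff x.
\]
So the associated contribution to the (scaled) path energy is the Riemann sum $\tfrac{1}{K}\sum_k \int_\domain \u(\tfrac{k-1}{K})\, |v(\tfrac{k}{K})|^2 \diff x$, which converges to $\int_0^1\int_\domain \u |v|^2 \diff x \diff t$ by continuity.

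\textbf{Density modulation term.} Write $\phi^K_k = \id + \tfrac1K v^K_k$ and Taylor-expand, using smoothness of $\u$ and $v$ for uniform control of the remainders:
\begin{align*}
  \det(D\phi^K_k) &= 1 + \tfrac1K \div v^K_k + O(K^{-2}),\\
  \u^K_k\circ\phi^K_k &= \u^K_k + \tfrac1K \nabla \u^K_k \cdot v^K_k + O(K^{-2}),\\
  \u^K_k &= \u^K_{k-1} + \tfrac1K \partial_t \u(\tfrac{k-1}{K}) + O(K^{-2}).
\end{align*}
Multiplying and collecting yields
\[
  \det(D\phi^K_k)\,\u^K_k\circ\phi^K_k - \u^K_{k-1}
  = \tfrac1K\bigl(\partial_t\u + \div(\u v)\bigr)\Big|_{t=\frac{k-1}{K}} + O(K^{-2})
  = \tfrac1K z\bigl(\tfrac{k-1}{K}\bigr) + O(K^{-2}).
\]
Squaring gives $\tfrac{1}{K^2} z^2 + O(K^{-3})$, and the scaled sum converges to $\int_0^1\int_\domain \tfrac1\delta z^2 \diff x \diff t$.

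\textbf{Viscous term.} By assumption (W3) we have $W(\Id)=0$, $DW(\Id)=0$ and $\tfrac12 D^2W(\Id)(B,B)=\tfrac\lambda 2(\tr B)^2+\mu\tr((\tfrac{B+B^T}{2})^2)$. Therefore a second order Taylor expansion around the identity gives
\[
  W(D\phi^K_k) = \frac{1}{K^2}\Bigl[\tfrac\lambda 2(\div v^K_k)^2 + \mu\,\tr(\varepsilon[v^K_k]^2)\Bigr] + O(K^{-3}),
\]
matching the non-regularizing part of $L[v,v]$. For the higher-order part, $D^m\id = 0$ for $m\geq 1$ gives $|D^m\phi^K_k|^2 = \tfrac{1}{K^2}|D^m v^K_k|^2$ exactly. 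Summing and passing to the limit yields $\gamma\int_0^1\int_\domain L[v,v] \diff x\diff t$.

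\textbf{Main difficulty.} The delicate point is the density modulation term, since the expansion involves three simultaneous expansions (of the determinant, of the composition, and of the time evolution of $\u$) whose $O(1)$ contributions must cancel before the $O(K^{-1})$ remainder can be identified as $z$. The polyconvex assumption (W1) and the smoothness hypothesis on $\u$ and $v$ (and hence uniform bounds on $D\phi^K_k$ away from singular matrices) are what make the $O(K^{-2})$ remainder in the expansion hold \emph{uniformly} on $\domain$, which is indispensable when one squares and sums. Once these uniform remainder estimates are in hand, the three Riemann-sum convergences combine to yield $\EdgKd \to \Edgd$.
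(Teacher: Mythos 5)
Your proposal follows essentially the same route as the paper's proof: the exact identity for the transport term, a second-order Taylor expansion of $W$ about the identity using (W3) for the dissipation term, and the combined expansions of $\det(D\phi^K_k)$ and $\u^K_k\circ\phi^K_k$ to recover $z=\partial_t\u+\div(\u v)$ for the source term, all assembled into Riemann sums; your observation that the sum must carry the factor $K$ (as in the definition of $\EdgK$) is also consistent with how the paper actually computes. The only cosmetic difference is that you expand $\u^K_k$ about $\u^K_{k-1}$ explicitly where the paper keeps the difference quotient, and your closing attribution of the uniform remainder bounds to the polyconvexity assumption (W1) is unnecessary (smoothness of $W$, $\u$ and $v$ suffices; $W$ does not even enter the source term), but neither affects correctness.
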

\begin{proof}
We define the step size $\tau = \frac{1}{K}$.
First, for the transport cost we easily get
\begin{align*}
 K \sum_{k=1}^K \int_\domain |\phi_k^K - \id|^2 \u_{k-1}^K \diff x
 = \sum_{k=1}^K \tau \int_\domain |v_k^K|^2 \u_{k-1}^K \diff x
 \xrightarrow{K \to \infty} \int_0^1 \int_\domain |v|^2 \u \diff x \diff t \, .
\end{align*}
Following \cite{WiBaRu10} the convergence of the dissipation cost follows from a Taylor expansion of the hyperelastic density function $\Wreg$ by using the consistency assumptions:
\begin{align*}
  &K \sum_{k=1}^K \int_\domain \Wreg(D\phi_k^K) + \penaltyHigherOrderDerivative |D^m \phi_k^K|^2 \diff x \\
  =& \frac{1}{\tau} \sum_{k=1}^K \int_\domain \Wreg(\Id) + \tau DW(\Id)(Dv_k^K) + \frac{\tau^2}{2} D^2W(\Id)(Dv_k^K,Dv_k^K) + O(\tau^3) + \penaltyHigherOrderDerivative \tau^2 |D^m v_k^K|^2 \diff x \\
  =& \sum_{k=1}^K \tau \int_\domain \left( \frac{\lambda}{2} \left(\tr Dv_k^K\right)^2 + \mu \tr\left( \frac{Dv_k^K+(Dv_k^K)^T}{2}\right)^2 \right)  + O(\tau) + \penaltyHigherOrderDerivative |D^m v_k^K|^2 \diff x
  \xrightarrow{K \to \infty} \int_0^1 \int_\domain L[v,v] \diff x \diff t \, .
\end{align*}
Finally, for the density modulation cost we use the Taylor expansions
\begin{align*}
 \det D\phi_k^K &= \Id + \tau \tr \left( \frac{ D \phi_k^K - \id}{\tau} \right) + O(\tau^2) = \Id + \tau \div( v_k^K ) + O(\tau^2) \\
 \u_k^K \circ \phi_k^K &= \u_k^K + \tau \nabla \u_k^K \cdot v_k^K + O(\tau^2)
\end{align*}
and obtain
\begin{align*}
 & K \sum_{k=1}^K \int_\domain |\det(D \phi_k^K ) \u_k^K \circ \phi_k^K - \u_{k-1}^K|^2 \diff x \\
 =& \sum_{k=1}^K \tau \int_\domain \left| \frac{\u_k^K - \u_{k-1}^K}{\tau} + \div( v_k^K ) \u_k^K + \nabla \u_k^K \cdot v_k^K + O(\tau) \right|^2 \diff x 
 \xrightarrow{K \to \infty} \int_0^1 \int_\domain |\partial_t \u + \div(\u v )|^2 \diff x \diff t \,.
\end{align*}

\end{proof}
\section{Existence of time discrete geodesics}\label{sec:discreteexistence}
In this section we assume that the assumptions of Section \ref{sec:timediscrete} are fulfilled
and that $\penaltyPushforward,\gamma >0$. As before we assume that $m > 1 + \frac{d}{2}$.
We will show that for given images 
$\u_A, \u_B \in \Imagespace = L^2(\domain,\R_{\geq 0})$ 
a time discrete geodesic exists.
First we prove that $\F$ is well-posed in the sense that there is an optimal deformation between two images.
\begin{Proposition}[Existence of minimizing deformations]\label{wellPosednessWEnerImL2}
Let $\u, \tilde \u \in \Imagespace$. 
Then $\Fd[\u, \tilde \u, \phi]$ attains its minimum over all deformation $\phi \in \deformationSpace$.
Moreover, $\phi$ is a diffeomorphism and $\phi^{-1} \in C^{1,\alpha}(D)$ for $\alpha \in (0, m - 1 - \frac{d}{2} )$.
\end{Proposition}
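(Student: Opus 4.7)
The plan is a direct-method argument from the calculus of variations, exploiting the Sobolev embedding $W^{m,2}(\domain) \hookrightarrow C^{1,\alpha}(\bar{\domain})$ valid for $\alpha < m-1-d/2$ (available since $m > 1 + d/2$) together with Ball-type lower semicontinuity for polyconvex integrands. Since $\id \in \deformationSpace$ with $\Fd[\u,\tilde\u,\id] = \penaltyPushforward^{-1}\|\tilde\u - \u\|_{L^2}^2 < \infty$, the infimum is finite. For a minimizing sequence $(\phi_n) \subset \deformationSpace$, the bound $\penaltyHigherOrderDerivative \int_\domain |D^m\phi_n|^2 \diff x \leq C$, combined with the Dirichlet boundary condition $\phi_n = \id$ on $\partial \domain$ and Poincar\'e's inequality applied to $\phi_n - \id \in W^{m,2}_0(\domain)$, yields a uniform $W^{m,2}$-bound. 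Extracting a subsequence I obtain $\phi_n \rightharpoonup \phi$ in $W^{m,2}$ and, by compactness of the embedding, $\phi_n \to \phi$ strongly in $C^{1,\alpha}(\bar{\domain})$.

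Next I verify lower semicontinuity of the three summands of $\Fd$. The transport cost is continuous: $\int_\domain |\phi_n-\id|^2 \u \diff x \to \int_\domain |\phi-\id|^2 \u \diff x$ from uniform convergence together with $\u \in L^1$. The higher-order regularizer $\penaltyHigherOrderDerivative \int_\domain |D^m\phi_n|^2 \diff x$ is weakly lower semicontinuous by convexity, and by (W1) together with Ball's theorem, $\int_\domain W(D\phi_n)\diff x$ is weakly lower semicontinuous in $W^{1,p}$. For the density-modulation term, uniform convergence of $\det D\phi_n$ together with a change-of-variables estimate shows that $\det D\phi_n \cdot \tilde\u \circ \phi_n$ is uniformly bounded in $L^2(\domain)$; identifying its weak $L^2$-limit via smooth test functions and using the uniform convergence of $\phi_n$ and (eventually) of $\phi_n^{-1}$ gives weak convergence to $\det D\phi \cdot \tilde\u \circ \phi$, whence $\Fdz[\u,\tilde\u,\phi] \leq \liminf_{n\to\infty} \Fdz[\u,\tilde\u,\phi_n]$ by lower semicontinuity of the squared $L^2$-norm under weak convergence.

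Finally, I check admissibility and the diffeomorphism property. The boundary condition $\phi|_{\partial \domain} = \id$ passes to the limit by uniform convergence; assumption (W2) together with the energy bound gives $\int_\domain |\log \det D\phi_n| \diff x \leq C$, and Fatou's lemma combined with the uniform convergence $\det D\phi_n \to \det D\phi$ yields $\int_\domain |\log \det D\phi| \diff x < \infty$, so $\det D\phi > 0$ almost everywhere, $\phi \in \deformationSpace$, and $\phi$ is a minimizer. To conclude that $\phi$ is a diffeomorphism with $\phi^{-1} \in C^{1,\alpha}(\domain)$, I would invoke Ball's global invertibility theorem for Sobolev maps that coincide with the identity on $\partial \domain$ and have positive Jacobian; the strong $C^{1,\alpha}$-regularity then promotes the a.e.\ positivity of the Jacobian to strict positivity everywhere, after which the inverse function theorem on the compact domain $\bar{\domain}$ delivers $\phi^{-1} \in C^{1,\alpha}(\domain)$ with the stated H\"older exponent.

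The principal obstacle I anticipate is this last step: the logarithmic growth in (W2) only controls the $L^1$-norm of $|\log \det D\phi|$ in the limit, which is a priori compatible with isolated vanishing of the Jacobian. Extracting genuine strict positivity of $\det D\phi$ and global invertibility therefore rests on the boundary trace together with the strong $C^{1,\alpha}$-convergence of the minimizing sequence, and the identification of the weak $L^2$-limit of $\det D\phi_n \cdot \tilde\u \circ \phi_n$ in Step 2 is the accompanying technical point that one has to treat carefully because no a priori uniform lower bound on $\det D\phi_n$ is available along the minimizing sequence.
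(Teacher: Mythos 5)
Your overall strategy coincides with the paper's: direct method, uniform $W^{m,2}$ bound from the multipolar term, compact embedding into $C^{1,\alpha}$, polyconvexity of $W$ for the dissipation term, and identification of the weak $L^2$ limit of $\det(D\phi^j)\,\tilde \u\circ\phi^j$ by testing against smooth functions via the change of variables $\int_\domain \det(D\phi^j)\,\tilde \u\circ\phi^j\,\eta\dd x=\int_\domain\tilde \u\,(\eta\circ(\phi^j)^{-1})\dd x$ --- exactly the device the paper uses. The genuine gap is in how you obtain invertibility of the limit and the convergence of the inverses on which your Step 2 silently relies. The asserted implication ``strong $C^{1,\alpha}$-regularity promotes the a.e.\ positivity of the Jacobian to strict positivity everywhere'' is false: a continuous function that is positive a.e.\ can still vanish on a null set, and a behaviour like $\det D\phi(x)\sim|x-x_0|^\beta$ near an isolated point $x_0$ is perfectly compatible with $\int_\domain|\log\det D\phi|\dd x<\infty$. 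So Fatou plus (W2) gives you admissibility of the limit but not the diffeomorphism property, and you correctly flag this as the principal obstacle without closing it.

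The missing ingredient --- this is what the paper imports from \cite{BeEf14} --- is a \emph{uniform} positive lower bound on $\det D\phi^j$ along the minimizing sequence, obtained \emph{before} passing to the limit: the uniform $W^{m,2}$ bound yields a uniform $C^{0,\alpha}$ modulus for $x\mapsto\det D\phi^j(x)$; if $\det D\phi^j(x_0)=\varepsilon$ then $\det D\phi^j\le\varepsilon+Cr^\alpha$ on $B_r(x_0)\cap\domain$, so $\int_\domain|\log\det D\phi^j|\dd x$ blows up as $\varepsilon\to0$, while (W2) and the energy bound keep this integral bounded; hence $\det D\phi^j\ge c_0>0$ with $c_0$ independent of $j$. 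This lower bound (i) survives the uniform convergence, giving $\det D\phi\ge c_0>0$ everywhere so that global invertibility (with $\phi=\id$ on $\partial\domain$) and the inverse function theorem apply, and (ii) is precisely what makes $(\phi^j)^{-1}\to\phi^{-1}$ in $C^{1,\alpha}$, which you need --- and defer with ``(eventually)'' --- to identify the distributional limit of the push-forward term. A minor further point: $\phi^j=\id$ on $\partial\domain$ does not place $\phi^j-\id$ in $W^{m,2}_0(\domain)$, so Poincar\'e in the form you invoke is not available; the uniform $W^{m,2}$ bound instead follows from $\|D^m\phi^j\|_{L^2}\le C$ together with $\|\phi^j\|_{L^\infty}\le \operatorname{diam}(\domain)$ (since $\phi^j$ maps into $\domain$) and interpolation of the intermediate derivatives.
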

\begin{proof}
  {\em Step 1}. 
First, we observe that $\Fd$ is bounded from below, since $\u$ is non-negative by definition of $\Imagespace$, $\W$ is non-negative by assumption (W1) and the source term is non-negative anyway.
Because of (W2)  and $\phi = \id \in \deformationSpace$ there exists an upper bound for the energy $\Fd$ on a minimizing sequence $(\phi^j)_{j \in \N}$.
Following \cite{BeEf14} one observes that a subsequence, again denoted by $(\phi^j)$, converges weakly in $W^{m,2}(\domain,\domain)$ to some $\phi \in W^{m,2}(\domain,\domain)$
and for the limit deformation we get $\phi^{-1} \in C^{1,\alpha}(\domain)$.

{\em Step 2}.
We prove that $\phi \mapsto \Fd[\u, \tilde \u, \phi]$ is lower semicontinuous w.r.t. weak convergence in $L^2$.
It is sufficient to show that $\det(D\phi^j) \tilde \u \circ \phi^j \rightharpoonup \det(D\phi) \tilde \u \circ \phi$ in $L^2$.
Then the result follows from the weak lower semicontinuity of the $L^2$-norm, the compact embedding of $W^{m,2}(\domain,\domain)$ into $C^{1,\alpha}(\domain,\domain)$ for $0<\alpha < m-\frac{d}{2}$, and results on the weak lower semicontinuity 
of polyconvex functionals \cite{Ci88}.
 By the assumption $\tilde \u \in L^2(\domain)$ and by Step 1 
 we have a uniform $L^2$-bound on $\det(D\phi^j) \tilde \u \circ \phi^j$, 
 so it is enough to prove that the expression converges in the sense of distributions.
  For $\eta \in C_c^{\infty}(\domain)$ we have
  \begin{align*}
    \int_\domain \det(D\phi^j)(x) \tilde \u \circ \phi^j(x) \eta(x) \diff x &= \int_\domain \tilde \u (x) \eta \circ (\phi^j)^{-1}(x) \diff x \quad \text{and} \\
    \int_\domain \det(D\phi)(x) \tilde \u \circ \phi(x) \eta(x) \diff x &= \int_\domain \tilde \u (x) \eta \circ \phi^{-1}(x) \diff x\,.
  \end{align*}
  Since $(\phi^j)^{-1} \rightarrow \phi^{-1}$ in $C^{1,\alpha}$, the result follows by the dominated convergence theorem.
\end{proof}
Now, for a given discrete path $(\u_0, \ldots, \u_K) \in \Imagespace^{K+1}$ 
we consider $\EdgKd$ defined in \eqref{EdgKd}.
By Proposition \ref{wellPosednessWEnerImL2} there exists $\Phi  = (\phi_1, \ldots, \phi_K) \in \deformationSpace^K$ such that $\EdgKd[(\u_0, \ldots, \u_K), (\phi_1, \ldots, \phi_K)] = \EdgK[(\u_0, \ldots, \u_K)]$. Now we study $\EdgKd$ for a fixed vector of deformations.
\begin{Proposition}\label{existenceImageInnerVecL2}
 Let $\u_A, \u_B \in \Imagespace$, $K \geq 2$. 
 Then for a fixed vector of deformations $\Phi = (\phi_1, \ldots, \phi_K) \in \mathcal{A}^K$ there exists a unique discrete path $(\u_0, \ldots , \u_{K}) \in \Imagespace^{K+1}$ with $\u_0 =\u_A$ and $\u_K = \u_B$, \ie
 \begin{align*}
  \EdgKd[(\u_0, \ldots , \u_{K}), \Phi] = \inf\limits_{(\u_1, \ldots , \u_{K-1}) \in \Imagespace^{K-1}} \EdgKd[(\u_A,\u_1,\ldots,\u_{K-1},\u_B), \Phi] \, .
 \end{align*}
\end{Proposition}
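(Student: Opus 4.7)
The approach is to apply the direct method on the closed convex set $\Imagespace^{K-1}$ inside the Hilbert space $(L^2(\domain))^{K-1}$. The key observation is that for fixed $\Phi = (\phi_1,\ldots,\phi_K) \in \deformationSpace^K$, the map $(\u_1,\ldots,\u_{K-1}) \mapsto \EdgKd[(\u_A,\u_1,\ldots,\u_{K-1},\u_B),\Phi]$ splits into a constant, a bounded linear functional, and a continuous quadratic form, where the quadratic form alone will provide strict convexity and coercivity.

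Explicitly, the dissipation terms $\sum_{k=1}^K \Fdv[\phi_k]$ are constant in the variables we optimize over, and the transport contribution $\sum_{k=1}^K \int_\domain |\phi_k-\id|^2 \u_{k-1} \diff x$ is a bounded linear functional of $(\u_1,\ldots,\u_{K-1})$ because $\phi_k-\id \in C^{1,\alpha}(\overline{\domain})$ by the Sobolev embedding $W^{m,2} \hookrightarrow C^{1,\alpha}$. The density modulation part is
\begin{equation*}
Q(\u_1,\ldots,\u_{K-1}) := \frac{1}{\penaltyPushforward}\sum_{k=1}^K \|a_k\,\u_k\circ\phi_k - \u_{k-1}\|_{L^2(\domain)}^2, \qquad a_k := \det D\phi_k,
\end{equation*}
and is a continuous quadratic form in $(\u_1,\ldots,\u_{K-1})$. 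Here each $a_k$ is continuous and bounded above and below by strictly positive constants on $\overline{\domain}$, and $u \mapsto u\circ\phi_k$ is a bounded isomorphism on $L^2(\domain)$, both being consequences of $\phi_k$ being a $C^{1,\alpha}$-diffeomorphism with positive Jacobian.

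The main step is coercivity of $Q$. Writing $h_0 := 0$, $h_K := 0$ and
\begin{equation*}
T : (h_1,\ldots,h_{K-1}) \;\longmapsto\; \bigl(a_k\,h_k\circ\phi_k - h_{k-1}\bigr)_{k=1}^K \in (L^2(\domain))^K,
\end{equation*}
one has $\penaltyPushforward\, Q(h) = \|T(h)\|^2$. The $k=1$ component of $T(h)$ is $a_1 h_1\circ\phi_1$, whose $L^2$-norm dominates $\|h_1\|_{L^2}$ up to a multiplicative constant (by change of variables and the lower bound on $a_1\circ\phi_1^{-1}$). Inductively, knowing $\|h_j\|_{L^2} \leq C_j\|T(h)\|$ for $j<k$, the identity $a_k h_k\circ\phi_k = h_{k-1} + T(h)_k$ combined with the bounds on $a_k\circ\phi_k^{-1}$ yields $\|h_k\|_{L^2} \leq C_k\|T(h)\|$. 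Summing, $Q(h) \geq c(\Phi)\sum_{k=1}^{K-1}\|h_k\|_{L^2}^2$ for some $c(\Phi)>0$, giving both strict positivity and coercivity.

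With coercivity, strict convexity, and weak lower semicontinuity (automatic for a convex continuous functional on a Hilbert space), the direct method closes the argument: any minimizing sequence in $\Imagespace^{K-1}$ is bounded, extracts to a weakly convergent subsequence whose limit remains in the weakly closed cone $\Imagespace^{K-1}$, and weak lower semicontinuity identifies the limit as a minimizer. Uniqueness is immediate from strict convexity. The main obstacle I anticipate is the coercivity estimate for $Q$ rather than mere triviality of its kernel: one must carry the recursion through with constants depending only on the uniform bounds for $a_k$ and $\phi_k^{-1}$, which is available precisely because admissible deformations are $C^{1,\alpha}$-diffeomorphisms of $\overline{\domain}$ with strictly positive Jacobian.
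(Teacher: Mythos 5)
Your proposal is correct and follows essentially the same route as the paper: the paper's own proof is a compressed version of your argument (an upper energy bound via the competitor $(\u_B,\ldots,\u_B)$, the bare assertion that the energy grows quadratically in $(\u_1,\ldots,\u_{K-1})$ with constants depending on $\Phi$, weak $L^2$-compactness of a minimizing sequence, and strict convexity for uniqueness), and what you add is precisely the proof of the quadratic growth that the paper only asserts. One caveat concerns that added detail. Your recursion runs forward from $k=1$, and its base case $\|a_1\,h_1\circ\phi_1\|_{L^2}\gtrsim\|h_1\|_{L^2}$ needs $\det D\phi_1$ to be bounded \emph{below} by a positive constant; membership in $\deformationSpace$ only guarantees $\det D\phi>0$ almost everywhere, hence $\geq 0$ everywhere by continuity of $D\phi$, but the Jacobian may still vanish at points (the paper's blanket remark that all admissible deformations are diffeomorphisms is itself optimistic here, and Proposition \ref{wellPosednessWEnerImL2} establishes invertibility only for \emph{optimal} deformations). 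The fix is to run the same recursion backward from $k=K$: since $h_K=0$, the $k=K$ term controls $\|h_{K-1}\|_{L^2}$ outright, and then
\begin{align*}
 \|h_{k-1}\|_{L^2} \leq \|a_k\,h_k\circ\phi_k-h_{k-1}\|_{L^2}+\|a_k\,h_k\circ\phi_k\|_{L^2}\,,
 \qquad
 \|a_k\,h_k\circ\phi_k\|_{L^2}^2=\int_\domain a_k^2\,(h_k\circ\phi_k)^2\diff x\leq \|a_k\|_{L^\infty(\domain)}\|h_k\|_{L^2}^2
\end{align*}
by the area formula (almost every point has a single preimage, by degree theory and $\phi_k=\Id$ on $\partial\domain$). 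This only uses the upper bound on the Jacobians, which holds for every $\Phi\in\deformationSpace^K$; with that reversal your argument is complete, and the rest of your proof (weak closedness of the nonnegative cone, weak lower semicontinuity of a continuous convex functional, uniqueness from strict convexity) is exactly what the paper intends.
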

\begin{proof}
First we see that the functional is bounded from above on a minimizing sequence by computing the energy of $( \u_B, \ldots, \u_B ) \in \Imagespace^{K-1}$:
 \begin{align*}
       \EdgKd[(\u_A, \u_B, \ldots, \u_B, \u_B), (\phi_1, \ldots, \phi_K) ] 
  \leq C(\phi_1, \ldots, \phi_k ) ( 1 + \left\| \u_A \right\|_{2} + \left\| \u_A \right\|_{2}^2 + \left\| \u_B \right\|_{2} + \left\| \u_B \right\|_{2}^2 ) < \infty \,.
 \end{align*}
Next we observe that for fixed $(\phi_1, \ldots, \phi_K) \in \deformationSpace^K$ the time discrete path energy is quadratically growing, \ie
$$
\EdgKd[(\u_A, \u_1 \ldots , \u_{K-1}, \u_{B}), (\phi_1, \ldots, \phi_K)] \geq c_1 \sum_{i=1,\ldots,K-1}  \|\u_k\|^2_{L^2(\domain)}  - c_2
$$ 
for constants $c_1,c_2 >0$ depending on $(\phi_k)_k$.
Therefore we can take a minimizing sequence $(\u_1^j, \ldots, \u_{K-1}^j)_{j \in \N} \subset \Imagespace^{K-1}$, which has because of the upper bound a weakly converging subsequence in $L^2$ with limit $(\u_1, \ldots, \u_{K-1}) \in \Imagespace^{K-1}$.
Now, the energy is strictly convex in $\u_k$ for all $k=1,\ldots, K-1$, hence there is a unique minimizer in $\Imagespace^{K-1}$.
\end{proof}

Next, we can use these two propositions to prove existence of minimizers of the discrete path energy $\EdgK$.
\begin{Theorem}[Existence of discrete geodesics]
 Let $\u_A, \u_B \in \Imagespace$, $K \geq 2$ be given.
 Then there exists $(\u_1,\ldots,\u_{K-1}) \in \Imagespace^{K-1}$ s.t.
 \begin{align*}
  \EdgK[(\u_A, \u_1, \ldots, \u_{K-1}, \u_B) ] = \inf\limits_{(\tilde{\u}_1, \ldots, \tilde{\u}_{K-1}) \in \Imagespace^{K-1} } \EdgK[ (\u_A, \tilde{\u}_1, \ldots, \tilde{\u}_{K-1}, \u_B ) ] \,.
 \end{align*}
\end{Theorem}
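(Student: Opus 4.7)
The plan is to apply the direct method of the calculus of variations, combining Propositions \ref{wellPosednessWEnerImL2} and \ref{existenceImageInnerVecL2}. I would start with a minimizing sequence $(\u_1^j,\ldots,\u_{K-1}^j)_{j\in\N} \subset \Imagespace^{K-1}$ for the problem $\inf \EdgK[(\u_A,\cdot,\u_B)]$ and, for each $j$, invoke Proposition \ref{wellPosednessWEnerImL2} to pick optimal deformations $\phi_k^j \in \deformationSpace$ so that $\F[\u_{k-1}^j,\u_k^j]=\Fd[\u_{k-1}^j,\u_k^j,\phi_k^j]$ for $k=1,\ldots,K$ (setting $\u_0^j=\u_A$, $\u_K^j=\u_B$). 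Then $\EdgKd[(\u_k^j),(\phi_k^j)] = \EdgK[(\u_k^j)]$ is bounded, and it suffices to show that along a subsequence the joint limits $(\u_k,\phi_k)$ satisfy $\EdgKd[(\u_k),(\phi_k)] \leq \liminf_j \EdgKd[(\u_k^j),(\phi_k^j)]$; the conclusion follows since $\EdgK[(\u_k)]\leq \EdgKd[(\u_k),(\phi_k)]$.

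The second step is to extract convergent subsequences. The viscous contribution $\gamma \int_\domain W(D\phi_k^j) + \epsilon |D^m\phi_k^j|^2 \diff x$ together with the boundary condition $\phi_k^j=\id$ on $\partial\domain$ gives a uniform bound on $\phi_k^j$ in $W^{m,2}(\domain,\domain)$, so by the compact embedding $W^{m,2}\hookrightarrow C^{1,\alpha}$ (using $m>1+d/2$) one may pass to a subsequence with $\phi_k^j \rightharpoonup \phi_k$ in $W^{m,2}$ and $\phi_k^j \to \phi_k$ in $C^{1,\alpha}$. Condition (W2) yields $\det D\phi_k > 0$ a.e., hence $\phi_k\in\deformationSpace$, and $(\phi_k^j)^{-1}\to \phi_k^{-1}$ in $C^{1,\alpha}$ as in Proposition \ref{wellPosednessWEnerImL2}. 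For the images, the density modulation term $\frac{1}{\penaltyPushforward}\int_\domain |\det(D\phi_k^j)\u_k^j\circ\phi_k^j - \u_{k-1}^j|^2 \diff x$ together with the uniform positive bounds on $\det D\phi_k^j$ inherited from the $C^{1,\alpha}$-convergence yields, starting from $\u_0^j=\u_A$ and inducting on $k$, a uniform $L^2$-bound on $\u_k^j$. Hence one extracts a further subsequence with $\u_k^j\rightharpoonup \u_k$ weakly in $L^2$ for $k=1,\ldots,K-1$.

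It remains to verify joint lower semicontinuity of $\EdgKd$ along this subsequence. Polyconvexity of $W$ (assumption (W1)) together with weak lower semicontinuity of $\phi\mapsto \int_\domain |D^m\phi|^2 \diff x$ handles the viscous term. For the transport cost $\int_\domain |\phi_k^j-\id|^2 \u_{k-1}^j \diff x$, the strong (uniform) convergence of $\phi_k^j$ combined with weak $L^2$-convergence of $\u_{k-1}^j$ gives convergence (not merely lsc) to $\int_\domain |\phi_k-\id|^2 \u_{k-1}\diff x$. The delicate term is the density modulation: here I would replicate Step 2 of the proof of Proposition \ref{wellPosednessWEnerImL2}, but now with both arguments varying. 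For a test function $\eta\in C_c^\infty(\domain)$, the change-of-variables identity
\begin{equation*}
  \int_\domain \det(D\phi_k^j)\,\u_k^j\circ\phi_k^j\,\eta \diff x = \int_\domain \u_k^j\,\eta\circ(\phi_k^j)^{-1}\diff x
\end{equation*}
combined with $\eta\circ(\phi_k^j)^{-1}\to \eta\circ\phi_k^{-1}$ in $L^\infty$ and $\u_k^j \rightharpoonup \u_k$ in $L^2$ yields $\det(D\phi_k^j)\,\u_k^j\circ\phi_k^j \rightharpoonup \det(D\phi_k)\,\u_k\circ\phi_k$ weakly in $L^2$; subtracting the weakly convergent $\u_{k-1}^j$ and applying weak lower semicontinuity of the squared $L^2$-norm completes the argument.

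The main obstacle is precisely this last step: the nonlinear coupling between the weakly converging intensities and the deformations within the density modulation term. However, because the deformations converge strongly in $C^{1,\alpha}$ (a consequence of the viscous regularization with $m>1+d/2$), the change-of-variables trick from Proposition \ref{wellPosednessWEnerImL2} extends without difficulty to the joint setting. Assembling the three lower semicontinuity estimates gives $\EdgKd[(\u_k),(\phi_k)] \leq \liminf_j \EdgKd[(\u_k^j),(\phi_k^j)] = \inf \EdgK$, which in combination with $\EdgK[(\u_k)]\leq \EdgKd[(\u_k),(\phi_k)]$ proves that $(\u_1,\ldots,\u_{K-1})$ realizes the infimum.
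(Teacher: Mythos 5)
Your proposal is correct and follows essentially the same route as the paper: optimal deformations via Proposition \ref{wellPosednessWEnerImL2}, weak $W^{m,2}$ (hence $C^{1,\alpha}$) compactness for the deformations, weak $L^2$ compactness for the intensities, and lower semicontinuity of the density-modulation term through the change-of-variables identity tested against $\eta\circ(\phi_k^j)^{-1}$. The only (harmless) deviation is that you obtain the uniform $L^2$-bound on the intermediate intensities by induction on $k$ directly from the boundedness of the source terms, whereas the paper first replaces $(\u_1^j,\ldots,\u_{K-1}^j)$ by the exact minimizers for the fixed deformations via Proposition \ref{existenceImageInnerVecL2} and reads off the bound from the quadratic growth established there.
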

\begin{proof}
Taking $\u_k^j = \u_B$ and $\phi_k = \Id$ to test the energy $\EdgK$ we observe that the $\EdgK$  is bounded from above on a minimizing sequence $(\u_1^j,\ldots,\u_{K-1}^j)_{j \in \N}$.
Take a minimizing sequence $(\u_1^j,\ldots,\u_{K-1}^j)_{j \in \N}$ of the discrete path energy $\EdgK[(\u_A, \cdot , \u_B)]$.
Due to Proposition \ref{wellPosednessWEnerImL2}, for every $(\u_1^j,\ldots,\u_{K-1}^j)$ there exists a family of optimal deformations $(\phi_1^j, \ldots, \phi_K^j) \in \mathcal{A}^K$ with 
$\EdgKd[(\u_A, \u_1^j,\ldots,\u_{K-1}^j, \u_B), (\phi_1^j, \ldots, \phi_K^j)] \leq \EdgKd[(\u_A, \u_1^j,\ldots,\u_{K-1}^j, \u_B), \Psi]$ for all $\Psi \in \mathcal{A}^K$.
As in the proof of Proposition \ref{wellPosednessWEnerImL2} there exists a subsequence again denoted $(\phi_k^j)_{j\in \N}$ with 
$\phi_k^j \rightharpoonup \phi_k$ in $W^{m,2}$ for all $k=1,\ldots, K$, s.t. $\phi_k^{-1} \in C^{1,\alpha}$.
By Proposition \ref{existenceImageInnerVecL2} we can assume (possible replacing $(\u_1^j,\ldots,\u_{K-1}^j)$ and thereby further reducing the energy) that $(\u_1^j,\ldots,\u_{K-1}^j)$ already minimizes the  energy $\EdgKd[(\u_A,\cdot,\u_B),(\phi_1^j,\ldots,\phi_{K}^j)]$ in $\Imagespace^{K-1}$.
Then $\u_k^j$ is uniformly bounded in $L^2$ by a constant $C$ depending only on $\u_A$ and $\u_B$ for $k=1,\ldots, K$. 
This constant $C$ is independent of the $\phi_k^j$ due to 
the uniform bound of $\phi_k^j$ in $W^{m,2}$. 
Hence we can pass to a further subsequence satisfying $(\u_1^j,\ldots,\u_{K-1}^j) \rightharpoonup (\u_1,\ldots,\u_{K-1})$ in $L^2$.
To prove weak lower semicontinuity in $L^2$ of the functional, it is sufficient to pass to the limit in the identities
\begin{align}
    \int_\domain \det(D\phi_k^j)(x) \u_k^j \circ \phi_k^j(x) \eta(x) \diff x &= \int_\domain \u_k^j (x) \eta \circ (\phi_k^j)^{-1}(x) \diff x\,, \\
    \int_\domain \det(D\phi_k)(x) \u_k \circ \phi_k(x) \eta(x) \diff x &= \int_\domain \u_k (x) \eta \circ \phi_k^{-1}(x) \diff x\,,
\end{align}
for an arbitrary $C^\infty$-function $\eta$, which follows from the $C^{1,\alpha}$-convergence of $(\phi_k^j)^{-1}$ and the weak $L^2$-convergence of $\u_k^j$.    For the demonstration of lower semicontinuity in the remaining terms we refer to analogous discussion in Proposition \ref{wellPosednessWEnerImL2}.
\end{proof}

Finally, let us study in more detail the optimality conditions for $(\u_1, \ldots, \u_{K-1}) \in  \Imagespace^{K-1}$ in preparation of the later derivation of a numerical algorithm. At first we consider the simplified model without the constraint $\u_k \geq 0$ for $k=1,\ldots,K-1$. Since for fixed deformations the energy is strictly convex, there exists a unique minimizer. For each $k=1,\ldots,K-1$ there are two terms in the energy where $\u_k$ appears:
 \begin{align*}
   \Fd[\u_{k}, \u_{k+1}, \phi_{k+1}]
  = & \int_\domain | \phi_{k+1} - \id |^2  \u_{k} + \frac{1}{\penaltyPushforward} | \det( D \phi_{k+1} ) \u_{k+1} \circ \phi_{k+1} - \u_k |^2 \diff x + \gamma \Fdv[\phi_{k+1}]\,, \\
    \Fd[\u_{k-1}, \u_k, \phi_k]
  = & \int_\domain | \phi_k - \id |^2  \u_{k-1} + \frac{1}{\penaltyPushforward} | \det( D \phi_k ) \u_k \circ \phi_k - \u_{k-1} |^2 \diff x + \gamma \Fdv[\phi_k] \\
  = & \int_\domain | \phi_k - \id |^2  \u_{k-1} + \frac{1}{\penaltyPushforward}  | \u_k - ( \det(D \phi_k)^{-1} \u_{k-1} ) \circ \phi_k^{-1} |^2 \det(D \phi_k) \circ \phi_k^{-1} \diff x + \gamma \Fdv[\phi_k] \,.
 \end{align*}
 Hence, the Euler-Lagrange equation for $\u_k$ is 
 \begin{align*}
  0 = | \phi_{k+1} - \id |^2 - \frac{2}{\penaltyPushforward} ( \det( D \phi_{k+1} ) \u_{k+1} \circ \phi_{k+1} - \u_k ) 
  + \frac{2}{\penaltyPushforward} ( \u_k - ( \det(D\phi_k)^{-1} \u_{k-1} ) \circ \phi_k^{-1} ) \det(D\phi_k) \circ \phi_k^{-1}
 \end{align*}
 for all $k=1,\ldots,K-1$ and a.e. $x \in \domain$.
 Now we define the discrete transport path $X(x) = \left( X_1(x), X_2(x), \ldots, X_{K-1}(x) \right)$ with $X_1(x) = \phi_1(x)$ and $X_k(x) = \phi_k (X_{k-1} (x) )$
 and the vector 
 \begin{align*}
  \bar \u(x) = \left( \u_1(X_1(x)), \u_2(X_2(x)), \ldots , \u_{K-1}(X_{K-1}(x)) \right) \,.
 \end{align*}
 Then we can write the optimality conditions as 
 \begin{align*}
  \u_k = \frac{  \det( D \phi_{k+1} ) \u_{k+1} \circ \phi_{k+1} +  \u_{k-1} \circ \phi_k^{-1}  - \frac{\penaltyPushforward}{2} | \phi_{k+1} - \id |^2  }{ 1 + \det(D\phi_k) \circ \phi_k^{-1} } \, .
 \end{align*}
 From $X_k \in C^{1,\alpha}$ we deduce that
 \begin{align}
  \u_k \circ X_k = \frac{  (\det( D \phi_{k+1} ) \circ X_k) (\u_{k+1} \circ X_{k+1}) +  \u_{k-1} \circ X_{k-1}  - \frac{\penaltyPushforward}{2} | X_{k+1} - X_k |^2  }{ 1 + \det(D\phi_k) \circ X_{k-1} }
  \label{recursionFormulaWithoutDet}
 \end{align}
 for a.e. $x\in \domain$ and for all $k=1,\ldots, K-1$.
 This can be rewritten as a linear system $\MatrixBF(x) \bar \u(x) = \RBF(x)$,
 where $\MatrixBF(x)$ is a tridiagonal matrix given by
 \begin{align*}
  \MatrixBF(x)_{k,k+1} & = - \frac{ \det(D \phi_{k+1} ) \circ X_k(x) }{ 1 + \det(D \phi_k) \circ X_{k-1}(x) } , 
  \quad
  \MatrixBF(x)_{k,k} = 1,
  \quad
  \MatrixBF(x)_{k,k-1} = - \frac{ 1 }{ 1 + \det(D \phi_k) \circ X_{k-1}(x) }
 \end{align*}
 and $\RBF(x) = \SBF(x) + \TBF(x)$ with $\SBF(x), \TBF(x) \in \R^{K-1}$ given by
 \begin{align*}
  \SBF(x) & = \left( \frac{\u_A(x)}{ 1 + \det(D \phi_1)(x) }, 0, \ldots , 0, \frac{ \det( D \phi_K) \circ X_{K-1}(x) \u_B \circ X_K(x) }{ 1 + \det( D \phi_{K-1}) \circ X_{K-2}(x) } \right)^T \\
  \TBF(x)_k & = - \frac{\penaltyPushforward}{2} \frac{|X_{k+1}(x) - X_k(x)|^2}{1+\det(D \phi_k) \circ X_{k-1}(x)} \quad \forall k=1,\ldots,K-1\,.
 \end{align*}
Now, the unique minimizer $(\u_1, \ldots, \u_{K-1}) \subset \Imagespace^{K-1}$ satisfies for a.e. $x \in \domain$ the derived linear system of equations and gives the only solution of this system. Thus $A(x)$ is invertible for a.e. $x \in \domain$ and by solving the system we can recover the minimizer.
In the constraint case $\u \geq 0$ a.e. the minimization with respect to $(\u_1,\ldots, \u_{K-1})$ no longer decomposes into a linear system 
of equations with unknowns $(\u_1(X_1(x)), \ldots, \u_{K-1}(X_{K-1}(x)))$ for a.e. $x \in \domain$. But the decomposition along the discrete paths $(X_0(x),\ldots, X_K(x))$ is still applicable. Indeed, one observes that for a.e. $x \in \domain$ the  vector $(\u_1(X_1(x)), \ldots, \u_{K-1}(X_{K-1}(x)))$
minimizes the quadratic functional 
\beq
Q(\tilde \u_1, \ldots, \tilde \u_{K-1}) 
= \sum_{k=1}^{K} \det(D X_{k-1})(x) \left(| X_{k}(x) - X_{k-1}(x) |^2  \tilde \u_{k-1} + 
\frac{1}{\penaltyPushforward} | \det( D \phi_{k})(X_{k-1}(x)) \tilde \u_{k}  - \tilde \u_{k-1} |^2 \right) 
\eeq
with $\tilde \u_0 = \u_A(x)$ and $\tilde \u_K = \u_B(x)$ over all $(\tilde \u_1, \ldots, \tilde \u_{K-1}) \in \R^{K-1}$  subject to the constraint $\tilde \u_k \geq 0$ for all $k=1,\ldots, K-1$. This is a simple quadratic optimization problem in $\R^{K-1}$ with inequality constraints.

\section{Spatial discretization}\label{sec:SpatialDiscretization}
With respect to the spatial discretization we follow the procedure already proposed in \cite{BeEf14}. 
We restrict to two dimensional images ($d=2$) and 
consider a regular quadrilateral grid on the two-dimensional image domain $\domain=[0,1]^2$ consisting of rectangular cells $\left\{ C_m \right\}_{m \in I_C }$ with $I_C$ being the associated index set. Let $\V_h$ be the space of piecewise bilinear continuous functions and denote by
$\left\{ \basisfct^i \right\}_{i\in I_N}$ the set of nodal basis functions with $I_N$ being the index set of all grid nodes $x_i$. \\
We investigate spatially discrete deformations $\Phi_k: \domain\to \domain$ with $\Phi_k \in \V_h^2 = \V_h\times \V_h$ and spatially discrete image maps $\U_k: \domain \to \R$ with $\U_k \in \V_h$.
Given any finite element function $U \in \V_h$ we denote by $\bar U = (U(x_i))_{i\in I_N}$ the corresponding vector of nodal values.
Now, we define a fully discrete counterpart $\EdgKh$ of the so far solely time discrete path energy $\EdgK$ defined in \eqref{eq:discreteenergy} as follows
\begin{align*}
 & \EdgKh[(\U_0,\ldots, \U_K)]  =  \min_{\substack{\Phi_k \in \V_h^2 \\ \Phi_k|_{\partial \domain}= \Id}}
   \EdgKdh[(\U_0,\ldots, \U_K),(\Phi_1,\ldots, \Phi_K)] 
\end{align*}
and obtain the resulting fully discrete approximation of the squared Riemannian distance 
\beqn\label{eq:WdgKh}
\WdgKh[\U_A,\U_B]^2 = \min_{{\U_0, \ldots, \U_K \in \Imagespace} \atop {\U_0 = \U_A,\; \U_K = \U_B}} \EdgKh[\U_0,\ldots, \U_K]\,.
\nonumber
\eeqn
Here, $\EdgKdh[(\U_0,\ldots, \U_K),(\Phi_1,\ldots, \Phi_K)]$ is the discrete counterpart of $\EdgKd$ in \eqref{EdgKd} 
obtained by the evaluation of all the integrals in $\EdgKd$ using third order Simpson quadrature with $9$ quadrature points.
Then, the resulting entries of the weighted mass matrix $\mass_h[\omega,\Phi,\Psi]_{ij} = \left(\mass_h[\omega,\Phi,\Psi]_{ij} \right)_{i,j \in I_N}$ with weight $\omega$ and transformed via deformations $\Phi, \Psi$ are given by 
$$
\mass_h[ \omega,\Phi,\Psi]_{ij} = \sum_{l\in I_C}\sum_{q=0}^8 w_q^l \, \omega (x_q^l)\,   (\basisfct^i\circ \Phi)(x_q^l)\,(\basisfct^j \circ \Psi)(x_q^l)\,.
$$
Here, the $x_q^l$ are the quadrature points and the $w_q^l$ are corresponding quadrature weights. 
In the case $\omega = 1$ we write $\mass_h[1,\Phi,\Psi] = \mass_h[\Phi,\Psi]$. 

To compute a minimizer of the fully discrete energy $\EdgKh$ we proceed as in the existence proof of time discrete geodesics in Section \ref{sec:discreteexistence} and alternate the optimisation of the set of deformations for fixed image intensities and the optimization of the image intensities for fixed deformations.
The optimization of deformations decouples in time. 
To calculate an optimal, discrete matching deformation for two consecutive images we use a conjugate gradient method for the fully discrete energy $\EdgKdh$.
In practice  we use the following hyperelastic energy
$W(D\phi) = \frac{\mu}{2} \left\| D\phi \right\|_F^2 + \frac{\lambda}{4} \left( \det D\phi \right)^2 - (\mu + \frac{\lambda}{2}) \log( \det D\phi ) - \mu - \frac{\lambda}{4}$ for $\det(D\phi) > 0$
with fixed $\lambda  = 10$ and $\mu = 1$
and differing from the assumptions in Section \ref{sec:timediscrete} we skip the higher order term $|D^m \phi|^2$. Indeed, the associated regularization experimentally turned out not to be necessary, possibly due to the regularization by the spatial discretization.
For a fixed vector of discrete deformations ${\mathbf{\Phi}} =  (\Phi_1,\ldots, \Phi_{K})$ the minimization of $\EdgKdh$ with respect to $ \Uinnervec = (\U_1, \ldots,  \U_{K-1})$ leads, as in the spatially continuous case, to a linear system of equations. 
Indeed, we obtain as the discrete counterpart of $\int_\domain |\phi_k - \id|^2 \u_{k-1} \diff x$ 
\begin{align*}
     \sum_{k=1}^K \sum_{l\in I_C} \sum_{q=0}^8 w_q^l  \left( | \Phi_k - \id |^2  \U_{k-1} \right) (x_q^l) 
  =  \sum_{k=1}^K \mass_h[ | \Phi_k - \id|^2, \Id, \Id] \bar \U_{k-1} \bar 1 \, ,
\end{align*}
with $\bar 1 = (1,\ldots, 1) \in \R^{I_N}$ and as the discrete counterpart of  $\int_\domain |\det(D\phi_k) \u_k \circ \phi_k - \u_{k-1}|^2 \diff x$ 
\beqa
   && \sum_{k=1}^K \sum_{l\in I_C} \sum_{q=0}^8 w_q^l \left( | \det (D\Phi_k) (\U_{k} \circ \Phi_k) - \U_{k-1}|^2 \right) (x_q^l) \\
 && =  \sum_{k=1}^K \left(\mass_h[ (\det D \Phi_k)^2, \Phi_k,\Phi_k] \bar \U_k \cdot \bar \U_k - 2 \mass_h[ \det( D \Phi_k), \Phi_k, \Id] \bar \U_k \cdot \bar \U_{k-1} + \mass_h[\Id,\Id] \bar \U_{k-1} \cdot \bar \U_{k-1} \right) \,.
\eeqa
Hence, the resulting discretized part of $\EdgKdh$ depending on $\bar \U_k$ is given by
\beqa                    
&& \mass_h[ | \Phi_{k+1} - \id|^2, \Id, \Id] \bar \U_{k} \bar 1 
 + \frac{1}{\penaltyPushforward}  (\mass_h[ (\det D \Phi_k)^2, \Phi_k,\Phi_k] + \mass_h[\Id,\Id]) \bar \U_k \cdot \bar \U_k \\
&& - \frac{2}{\penaltyPushforward}  \left( \mass_h[ \det( D \Phi_k), \Phi_k, \Id] \bar \U_k \cdot \bar \U_{k-1} + \mass_h[ \det( D \Phi_{k+1}), \Phi_{k+1}, \Id] \bar \U_{k+1} \cdot \bar \U_k \right)\,.
\eeqa
In what follows, we restrict to the non constraint case minimizing over intensities, which are not necessarily non negative. In fact, in our numerical experiments for $\U_A, \U_B \geq 0$ and with all deformations being initialized with the identity we did not observe negative density values in the vectors $\bar \U_k$ for $k=1,\ldots, K-1$. The implementation of a constraint, quadratic optimization method is work in progress.

For the variation of $\EdgKdh$ with respect to the $k$-th image $\bar \U_k$ one obtains
\beqa   
    \partial_{\bar \U_k} \EdgKdh &= & \mass_h[ | \Phi_{k+1} - \id|^2, \Id, \Id] \bar 1
  + \frac{2}{\penaltyPushforward} (\mass_h[ (\det D \Phi_k)^2, \Phi_k,\Phi_k] + \mass_h[\Id,\Id] ) \bar \U_k \\ 
 && -  \frac{2}{\penaltyPushforward} \left( \mass_h[ \det D \Phi_k, \Phi_k, \Id]^T \bar \U_{k-1} + \mass_h[ \det D \Phi_{k+1}, \Phi_{k+1}, \Id] \bar \U_{k+1} \right) \,. 
\eeqa   
As a consequence the necessary condition for $\Uinnervec := (\U_1,\ldots,  \U_{K-1})$ to be a minimizer of $\EdgKdh$
is a block tridiagonal system of linear equations $\MatrixBF[\mathbf{\Phi}] \bar \Uinnervec = \RBF[\mathbf{\Phi}]$, where
$\MatrixBF[\mathbf{\Phi}]$ is formed by $(K-1)\times(K-1)$ matrix blocks $\MatrixBF_{k,k'} \in  \R^{I_N\times I_N}$ with
\begin{align*}
\MatrixBF_{k,k-1} & = - \mass_h[ \det D \Phi_k, \Phi_k, \Id]^T, \quad 
\MatrixBF_{k,k}    = \mass_h[ (\det D \Phi_k)^2, \Phi_k,\Phi_k] + \mass_h[\Id,\Id]\,,\\ 
\MatrixBF_{k,k+1} & = - \mass_h[ \det D \Phi_{k+1}, \Phi_{k+1}, \Id]
\end{align*}
and $\RBF[\mathbf{\Phi}] = \SBF[\mathbf{\Phi}] + \TBF[\mathbf{\Phi}]$ consists of $K-1$ vector blocks $\RBF_k = \SBF_k + \TBF_k \in \R^{I_N}$ with
$\SBF_1 = \mass_h[ \det D \Phi_1, \Phi_1, \Id]^T \bar \U_0$, $\SBF_2 = \ldots = \SBF_{K-2}     = 0$, $\SBF_{K-1}  = \mass_h[ \det D \Phi_{K}, \Phi_{K}, \Id] \bar \U_K$, and 
$\TBF_k  = - \frac{\penaltyPushforward}{2} \mass_h[ | \Phi_{k+1} - \id|^2, \Id, \Id]^T \bar 1$ for all $k=1,\ldots, K-1$.

The energy $\sum_{l\in I_C}\sum_{q=0}^8 w_q^l \left( |\Phi_k - \id|^2 \U_{k-1} + \frac{1}{\penaltyPushforward} | \det D\Phi_k \U_{k} \circ \Phi_k- \U_{k-1}|^2 \right)(x_q^l)$
is convex in $\U_{k}$ and strictly convex in $\U_{k-1}$.
Hence, $\EdgKdh$ is strictly convex in $\Uinnervec$ and there is a unique minimizer $\Uinnervec = \Uinnervec[\mathbf{\Phi}]$ for fixed $\mathbf{\Phi}$.
This implies that $\MatrixBF$ is invertible and therefore the resulting solution $\Uinnervec$ coincides with the unique minimizer of $\EdgKdh$.
Numerically, the corresponding system of linear equations is solved with a conjugate gradient method with diagonal preconditioning.
In addition, as an outer iteration of the numerical energy descent scheme we apply a cascadic approach starting on coarse grids and successively refining the grid.   
 
In what follows, we will discuss numerical results obtained by the proposed scheme. 
We start with two simple transport examples of image densities with identical mass. 
\begin{figure}[t]
\setlength{\unitlength}{.053\linewidth}
\resizebox{0.85\linewidth}{!}{
  \begin{picture}(23.2,2)(0,0)
        \put(0, 0){\includegraphics[width=0.1\textwidth]{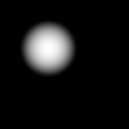}}
    \put(2.2, 0){\includegraphics[width=0.1\textwidth]{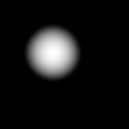}}
    \put(4.4, 0){\includegraphics[width=0.1\textwidth]{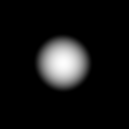}}
    \put(6.6, 0){\includegraphics[width=0.1\textwidth]{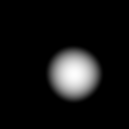}}
    \put(8.8, 0){{\includegraphics[width=0.1\textwidth]{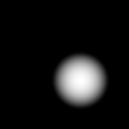}}}
            \put(12.0, 0){\includegraphics[width=0.1\textwidth]{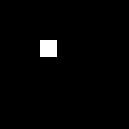}}
    \put(14.2, 0){\includegraphics[width=0.1\textwidth]{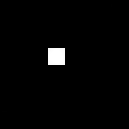}}
    \put(16.4, 0){\includegraphics[width=0.1\textwidth]{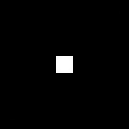}}
    \put(18.6, 0){\includegraphics[width=0.1\textwidth]{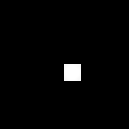}}
    \put(20.8, 0){{\includegraphics[width=0.1\textwidth]{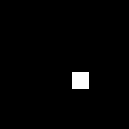}}}
  \end{picture}
}
\caption{Optimal transport via translation for two different pairs of images, each of them with identical mass. Left: discrete geodesic between a pair of scaled bump maps with $K=4$, $\penaltyPushforward = 10^{-1}$, $\gamma=10^{-1}$ is shown, right: discrete geodesic between a square and a translated square with $K=4$, $\penaltyPushforward = 10^{-1}$, $\gamma=10^{-2}$.}
\label{fig:SimpleExamples}
\end{figure}
In Figure \ref{fig:SimpleExamples} the optimal transport geodesics connecting a 
bump map $f(x) = \exp\left(\left(1-\sigma^{-2} |x-x_0|^2\right)^{-1}\right) \chi_{B_\sigma(x_0)}$ with centre $x_0\in \domain$ and radius $\sigma >0$
and its translate as well as a characteristic function of a square and its translate are considered for small $\penaltyPushforward$ and $\gamma$. 
Indeed, the computed optimal transport constitutes of a translation.
Next, we illustrate the role of the source term allowing for density modulation in case of $\u_A$ and $\u_B$ given in Figure \ref{fig:BenamouBrenierExampleDiffMassCompare} as two bump maps of different size at different centre points and in Figure \ref{fig:differentMass} as the characteristic functions of two 
rectangles of different size still for small $\gamma$.
\begin{figure}[h]
\setlength{\unitlength}{0.05\textwidth}
\resizebox{0.85\linewidth}{!}{
  \begin{picture}(19.7,8)
        \put(0,4){\includegraphics[width=0.18\textwidth]{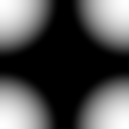}}
    \put(4,4){\includegraphics[width=0.18\textwidth]{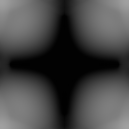}}
    \put(8,4){\includegraphics[width=0.18\textwidth]{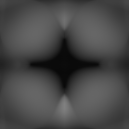}}
    \put(12,4){\includegraphics[width=0.18\textwidth]{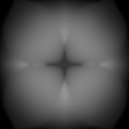}}
    \put(16,4){{\includegraphics[width=0.18\textwidth]{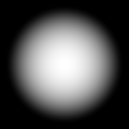}}}
        \put(0,0){\includegraphics[width=0.18\textwidth]{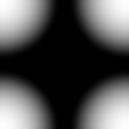}}
    \put(4,0){\includegraphics[width=0.18\textwidth]{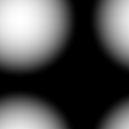}}
    \put(8,0){\includegraphics[width=0.18\textwidth]{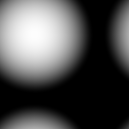}}
    \put(12,0){\includegraphics[width=0.18\textwidth]{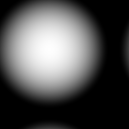}}
    \put(16,0){{\includegraphics[width=0.18\textwidth]{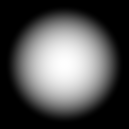}}}
  \end{picture}
}
\caption{Discrete geodesic are computed for different viscosity parameter $\gamma$ between two bump maps placed on a square and periodically extended to $\R^2$.  Top: $\gamma = 5 \cdot 10^{-4}$, $\penaltyPushforward = 10^{-1}$,  bottom: $\gamma = 5$ ($\penaltyPushforward = 10^{-1}$). The images are extracted from a discrete geodesic with $K=9$. 
The different contributions to the 
resulting discrete path energy are: transport cost = 0.0278363, density modulation cost = 0.00107356, dissipation cost = 0.0128489 (top row)  and  transport cost = 0.155598, density modulation cost = 0.00274782, dissipation cost =  0.00139578 (bottom row).  }
\label{fig:differentViscosity}
\end{figure}
In Figure \ref{fig:differentViscosity} we show the influence of the viscous dissipation. 
Picking up a test case from \cite{BeBr00} we consider image intensities on a square periodically extended to $\R^2$ with a bump map once placed in the vertices of the square and once at the centre. We consider periodic boundary conditions both for the image intensities and for the motion field. The Wasserstein geodesic was already computed in \cite{BeBr00} and we obtain approximately the same result for small $\gamma$. Indeed, the bump map at the vertices split up into four pieces, which are then transported separately into the centre. From the perspective of optimal transport this path is energetically preferable due to the shorter transport distance compared to a simple translation from the vertices into the centre. Obviously, this splitting of mass is expensive from the viscous dissipation perspective. Hence, for larger $\gamma$ we observe the simple translation.

Next, we illustrate the role of the source term allowing for density modulation in case of $\u_A$ and $\u_B$ as in Figure \ref{fig:BenamouBrenierExampleDiffMassCompare} but now with different mass in the two bump maps.
Still we impose periodic boundary conditions. 
For small values of $\penaltyPushforward$ we observe a splitting of the bump maps in the corners with the outer one being blended out and the inner one being mainly transported into the middle, whereas 
for larger values of $\penaltyPushforward$ we observe a blending process without significant transport.
Furthermore, increasing the viscous dissipation parameter $\gamma$ leads as in Figure \ref{fig:differentViscosity} to a translation of the whole bump, while the mass overhead is continuously faded-out.
\begin{figure}[t]
\setlength{\unitlength}{.05\linewidth}
\resizebox{0.85\linewidth}{!}{
  \begin{minipage}[h]{1.0\textwidth}
  \begin{picture}(20,2.4)
    \put(0,0){\includegraphics[width=0.1\textwidth]{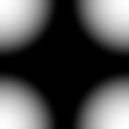}}
    \put(2.2,0){\includegraphics[width=0.1\textwidth]{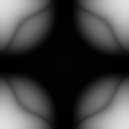}}
    \put(4.4,0){\includegraphics[width=0.1\textwidth]{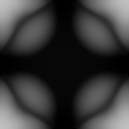}}
    \put(6.6,0){\includegraphics[width=0.1\textwidth]{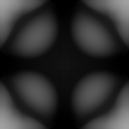}}
    \put(8.8, 0){\includegraphics[width=0.1\textwidth]{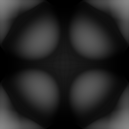}}
    \put(11,0){\includegraphics[width=0.1\textwidth]{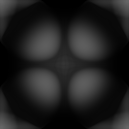}}
    \put(13.2,0){\includegraphics[width=0.1\textwidth]{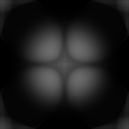}}
    \put(15.4,0){\includegraphics[width=0.1\textwidth]{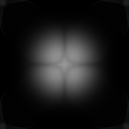}}
    \put(17.6,0){\includegraphics[width=0.1\textwidth]{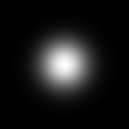}}
  \end{picture}
  \end{minipage}
}
\resizebox{0.85\linewidth}{!}{
  \begin{minipage}[h]{1.0\textwidth}
  \begin{picture}(20,2.4)
    \put(0,0){\includegraphics[width=0.1\textwidth]{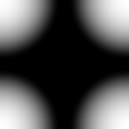}}
    \put(2.2,0){\includegraphics[width=0.1\textwidth]{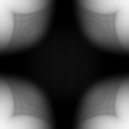}}
    \put(4.4,0){\includegraphics[width=0.1\textwidth]{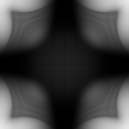}}
    \put(6.6,0){\includegraphics[width=0.1\textwidth]{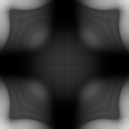}}
    \put(8.8, 0){\includegraphics[width=0.1\textwidth]{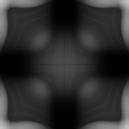}}
    \put(11,0){\includegraphics[width=0.1\textwidth]{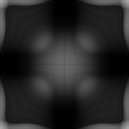}}
    \put(13.2,0){\includegraphics[width=0.1\textwidth]{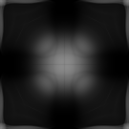}}
    \put(15.4,0){\includegraphics[width=0.1\textwidth]{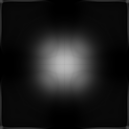}}
    \put(17.6,0){\includegraphics[width=0.1\textwidth]{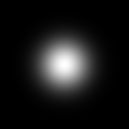}}
  \end{picture}
  \end{minipage}
}
\resizebox{0.85\linewidth}{!}{
  \begin{minipage}[h]{1.0\textwidth}
  \begin{picture}(20,2.4)
    \put(0,0){\includegraphics[width=0.1\textwidth]{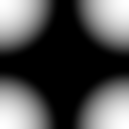}}
    \put(2.2,0){\includegraphics[width=0.1\textwidth]{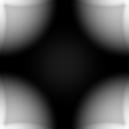}}
    \put(4.4,0){\includegraphics[width=0.1\textwidth]{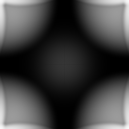}}
    \put(6.6,0){\includegraphics[width=0.1\textwidth]{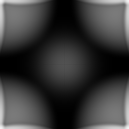}}
    \put(8.8, 0){\includegraphics[width=0.1\textwidth]{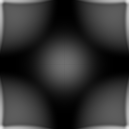}}
    \put(11,0){\includegraphics[width=0.1\textwidth]{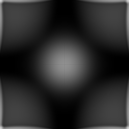}}
    \put(13.2,0){\includegraphics[width=0.1\textwidth]{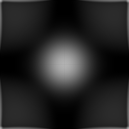}}
    \put(15.4,0){\includegraphics[width=0.1\textwidth]{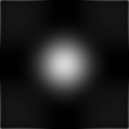}}
    \put(17.6,0){\includegraphics[width=0.1\textwidth]{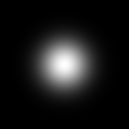}}
  \end{picture}
  \end{minipage}
}
\resizebox{0.85\linewidth}{!}{
  \begin{minipage}[h]{1.0\textwidth}
  \begin{picture}(20,2.4)
    \put(0.0,2.1){\line(1,0){19.6}}
    \put(0,0){\includegraphics[width=0.1\textwidth]{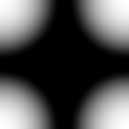}}
    \put(2.2,0){\includegraphics[width=0.1\textwidth]{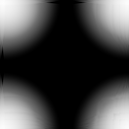}}
    \put(4.4,0){\includegraphics[width=0.1\textwidth]{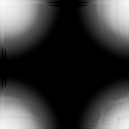}}
    \put(6.6,0){\includegraphics[width=0.1\textwidth]{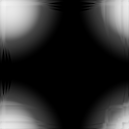}}
    \put(8.8, 0){\includegraphics[width=0.1\textwidth]{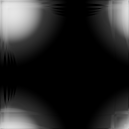}}
    \put(11,0){\includegraphics[width=0.1\textwidth]{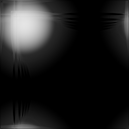}}
    \put(13.2,0){\includegraphics[width=0.1\textwidth]{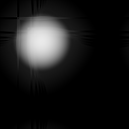}}
    \put(15.4,0){\includegraphics[width=0.1\textwidth]{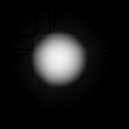}}
    \put(17.6,0){\includegraphics[width=0.1\textwidth]{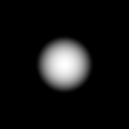}}
  \end{picture}
  \end{minipage}
}
\caption{Discrete geodesics ($K=9$) between two bump maps of different mass for different values of $\penaltyPushforward$ (from the first to the third row $\penaltyPushforward= 1,10,100$  with $\gamma=5\cdot10^{-4}$). 
In the fourth row the discrete geodesic for $\gamma=1$, $\penaltyPushforward=10^{-1}$ is displayed.
}
\label{fig:BenamouBrenierExampleDiffMassCompare}
\end{figure}
In Figure \ref{fig:differentMass} the input images consists of characteristic functions of two 
rectangles of different size.  Now, we impose natural boundary on $\partial \domain$.
For small $\penaltyPushforward$ and strong penalization of sources the surplus of mass is pushed outwards, whereas for large $\penaltyPushforward$ one observes a simple blending and almost no transport.
\begin{figure}[t]
\setlength{\unitlength}{.05\linewidth}
\resizebox{0.85\linewidth}{!}{
  \begin{minipage}[h]{1.0\textwidth}
  \begin{picture}(20,2.4)
        \put(0,0){\includegraphics[width=0.1\textwidth]{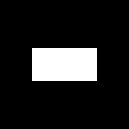}}
    \put(2.2,0){\includegraphics[width=0.1\textwidth]{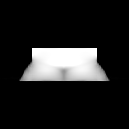}}
    \put(4.4,0){\includegraphics[width=0.1\textwidth]{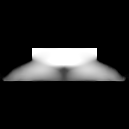}}
    \put(6.6,0){\includegraphics[width=0.1\textwidth]{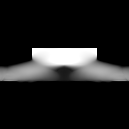}}
    \put(8.8, 0){\includegraphics[width=0.1\textwidth]{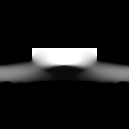}}
    \put(11,0){\includegraphics[width=0.1\textwidth]{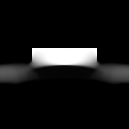}}
    \put(13.2,0){\includegraphics[width=0.1\textwidth]{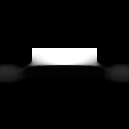}}
    \put(15.4,0){\includegraphics[width=0.1\textwidth]{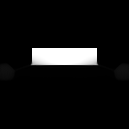}}
    \put(17.6,0){\includegraphics[width=0.1\textwidth]{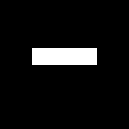}}
  \end{picture}
  \end{minipage}
}
\resizebox{0.85\linewidth}{!}{
  \begin{minipage}[h]{1.0\textwidth}
  \begin{picture}(20,2.4)
        \put(0,0){\includegraphics[width=0.1\textwidth]{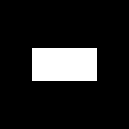}}
    \put(2.2,0){\includegraphics[width=0.1\textwidth]{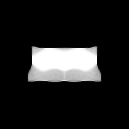}}
    \put(4.4,0){\includegraphics[width=0.1\textwidth]{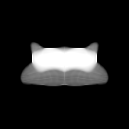}}
    \put(6.6,0){\includegraphics[width=0.1\textwidth]{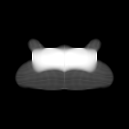}}
    \put(8.8, 0){\includegraphics[width=0.1\textwidth]{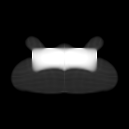}}
    \put(11,0){\includegraphics[width=0.1\textwidth]{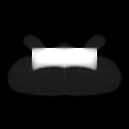}}
    \put(13.2,0){\includegraphics[width=0.1\textwidth]{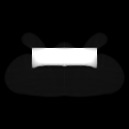}}
    \put(15.4,0){\includegraphics[width=0.1\textwidth]{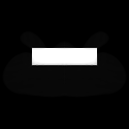}}
    \put(17.6,0){\includegraphics[width=0.1\textwidth]{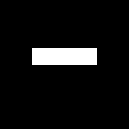}}
  \end{picture}
  \end{minipage}
}
\resizebox{0.85\linewidth}{!}{
  \begin{minipage}[h]{1.0\textwidth}
  \begin{picture}(20,2.4)
        \put(0,0){\includegraphics[width=0.1\textwidth]{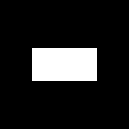}}
    \put(2.2,0){\includegraphics[width=0.1\textwidth]{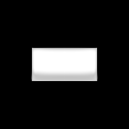}}
    \put(4.4,0){\includegraphics[width=0.1\textwidth]{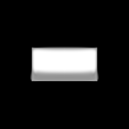}}
    \put(6.6,0){\includegraphics[width=0.1\textwidth]{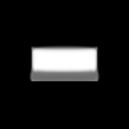}}
    \put(8.8, 0){\includegraphics[width=0.1\textwidth]{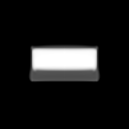}}
    \put(11,0){\includegraphics[width=0.1\textwidth]{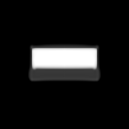}}
    \put(13.2,0){\includegraphics[width=0.1\textwidth]{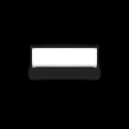}}
    \put(15.4,0){\includegraphics[width=0.1\textwidth]{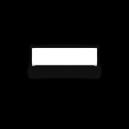}}
    \put(17.6,0){\includegraphics[width=0.1\textwidth]{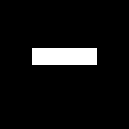}}
  \end{picture}
  \end{minipage}
}
\caption{Discrete geodesics between two rectangles of different mass for different values of $\penaltyPushforward$. Top: $\penaltyPushforward= 10^{-2}$, middle: $\penaltyPushforward = 10^{-1}$, bottom: $\penaltyPushforward =1$
($K=9$, $\gamma=10^{-2}$).}
\label{fig:differentMass}
\end{figure}

Furthermore, Figure \ref{fig:blending} compares our model with the metamorphosis model on the discrete geodesic between two images consisting of a light and a dark square and the flipped configuration. 
For very small density modulation parameter ($\penaltyPushforward=0.01$) we observe a transport of a ''light block" from the bottom to the top square, especially mass is approximately preserved.
In case of the metamorphism model with purely viscous flow, we see a transport of the lighter square combined with a fading in and out of the darker phase. 
\begin{figure}[!htbp]
\setlength{\unitlength}{.05\linewidth}
\resizebox{0.85\linewidth}{!}{
  \begin{minipage}[h]{1.0\textwidth}
  \begin{picture}(20,2.4)
    \put(0,0){\includegraphics[width=0.1\textwidth]{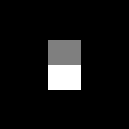}}
    \put(2.2,0){\includegraphics[width=0.1\textwidth]{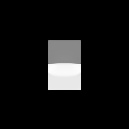}}
    \put(4.4,0){\includegraphics[width=0.1\textwidth]{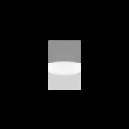}}
    \put(6.6,0){\includegraphics[width=0.1\textwidth]{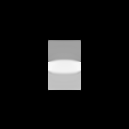}}
    \put(8.8, 0){\includegraphics[width=0.1\textwidth]{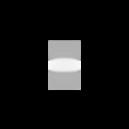}}
    \put(11,0){\includegraphics[width=0.1\textwidth]{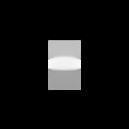}}
    \put(13.2,0){\includegraphics[width=0.1\textwidth]{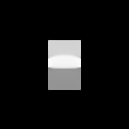}}
    \put(15.4,0){\includegraphics[width=0.1\textwidth]{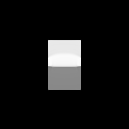}}
    \put(17.6,0){\includegraphics[width=0.1\textwidth]{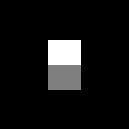}}
  \end{picture}
  \end{minipage}
}
\resizebox{0.85\linewidth}{!}{
  \begin{minipage}[h]{1.0\textwidth}
  \begin{picture}(20,2.4)
    \put(0,0){\includegraphics[width=0.1\textwidth]{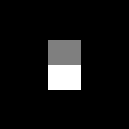}}
    \put(2.2,0){\includegraphics[width=0.1\textwidth]{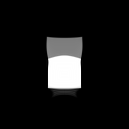}}
    \put(4.4,0){\includegraphics[width=0.1\textwidth]{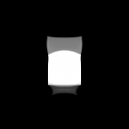}}
    \put(6.6,0){\includegraphics[width=0.1\textwidth]{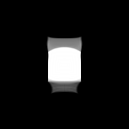}}
    \put(8.8, 0){\includegraphics[width=0.1\textwidth]{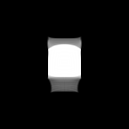}}
    \put(11,0){\includegraphics[width=0.1\textwidth]{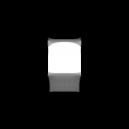}}
    \put(13.2,0){\includegraphics[width=0.1\textwidth]{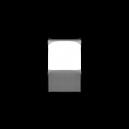}}
    \put(15.4,0){\includegraphics[width=0.1\textwidth]{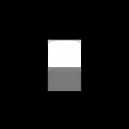}}
    \put(17.6,0){\includegraphics[width=0.1\textwidth]{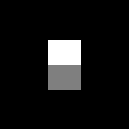}}
  \end{picture}
  \end{minipage}
}
\caption{Comparison of the combined model with viscosity parameter $\gamma=1$ (top) and the metamorphosis model (bottom) for $\penaltyPushforward=10^{-2}$. }
\label{fig:blending}
\end{figure}

As a first imaging application we pick up in Figure \ref{fig:MongeKantorovich}  an example from \cite{papadakis2014optimal}. For small $\penaltyPushforward$ and small $\gamma$ we obtain a very similar result. Finally, in Figure \ref{fig:MRIs} the geodesic between two different slices of the same human brain recorder via MRI is shown. The corresponding image intensities are characterized by substantially different masses. In fact, it is the incorporation of both the source term and the viscous dissipation term which enables a reasonable morph between the two slices. Thereby, the source terms allows for local image intensity modulation, whereas the viscous dissipation ensures regularity of the resulting transport path.
\begin{figure}[t]
\setlength{\unitlength}{.05\linewidth}
\resizebox{0.85\linewidth}{!}{
  \begin{picture}(20,3)
        \put(0,0){\includegraphics[width=0.1\textwidth]{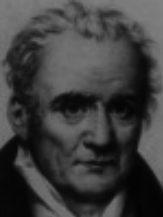}}
    \put(2.2,0){\includegraphics[width=0.1\textwidth]{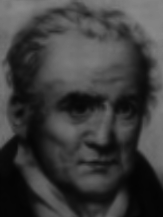}}
    \put(4.4,0){\includegraphics[width=0.1\textwidth]{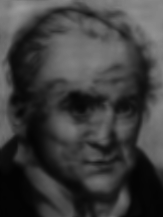}}
    \put(6.6,0){\includegraphics[width=0.1\textwidth]{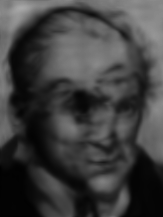}}
    \put(8.8,0){\includegraphics[width=0.1\textwidth]{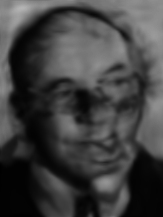}}
    \put(11,0){\includegraphics[width=0.1\textwidth]{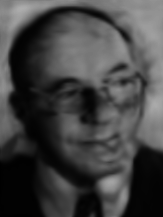}}
    \put(13.2,0){\includegraphics[width=0.1\textwidth]{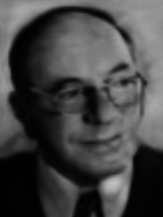}}
    \put(15.4,0){\includegraphics[width=0.1\textwidth]{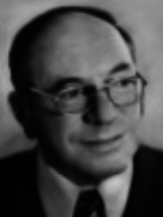}}
    \put(17.6,0){\includegraphics[width=0.1\textwidth]{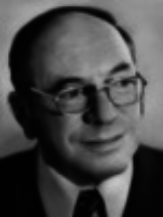}}
  \end{picture}
}
\caption{A discrete geodesic between images of Monge and Kantorovich with $\penaltyPushforward=10^{-2}$, $\gamma=10^{-2}$ (image provided by G. Peyr{\'e}).}
\label{fig:MongeKantorovich}
\end{figure}

\setlength{\unitlength}{0.05\textwidth}
\begin{figure}[t]
\resizebox{0.85\linewidth}{!}{
  \begin{picture}(20,9.4)
        \put(0,7){\includegraphics[width=0.1\textwidth]{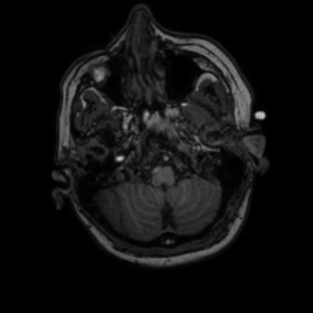}}
    \put(1.4,7.1){ \color{white}{ \bf $\u_0$ \bf} }
    \put(2.2,7){\includegraphics[width=0.1\textwidth]{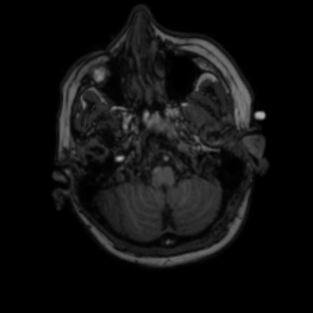}}
    \put(3.6,7.1){ \color{white}{ \bf $\u_1$ \bf} }
    \put(4.4,7){\includegraphics[width=0.1\textwidth]{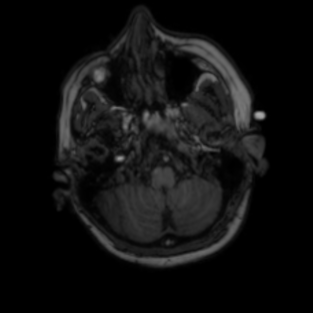}}
    \put(5.8,7.1){ \color{white}{ \bf $\u_2$ \bf} }
    \put(6.6,7){\includegraphics[width=0.1\textwidth]{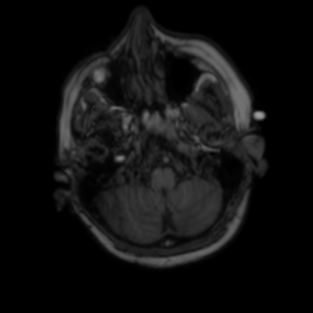}}
    \put(8.0,7.1){ \color{white}{ \bf $\u_31$ \bf} }
    \put(8.8,7){\includegraphics[width=0.1\textwidth]{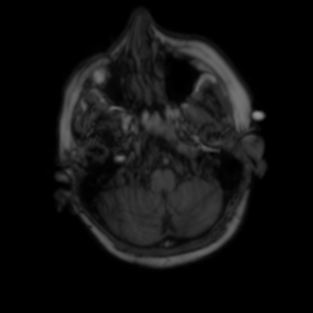}}
    \put(10.2,7.1){ \color{white}{ \bf $\u_4$ \bf} }
    \put(11.0,7){\includegraphics[width=0.1\textwidth]{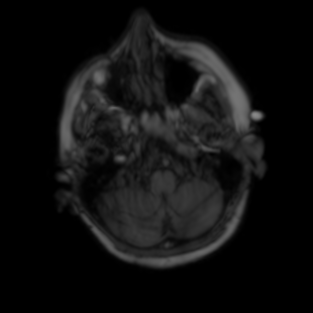}}
    \put(12.4,7.1){ \color{white}{ \bf $\u_5$ \bf} }
    \put(13.2,7){\includegraphics[width=0.1\textwidth]{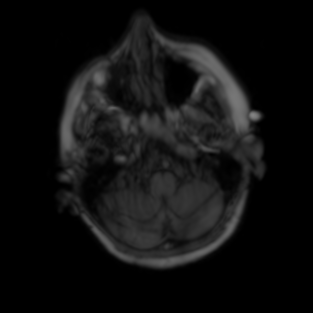}}
    \put(14.6,7.1){ \color{white}{ \bf $\u_6$ \bf} }
    \put(15.4,7){\includegraphics[width=0.1\textwidth]{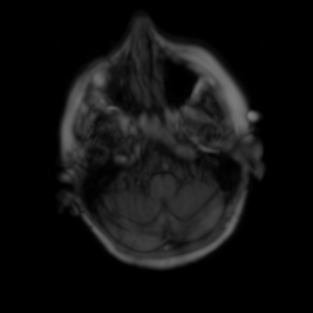}}
    \put(16.8,7.1){ \color{white}{ \bf $\u_7$ \bf} }
    \put(17.6,7){\includegraphics[width=0.1\textwidth]{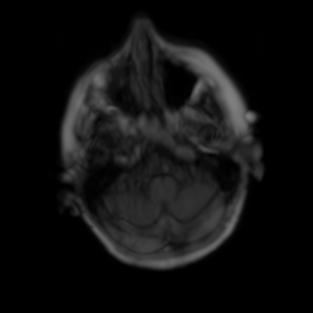}}
    \put(19.0,7.1){ \color{white}{ \bf $\u_8$ \bf} }
    \put(0,4.8){\includegraphics[width=0.1\textwidth]{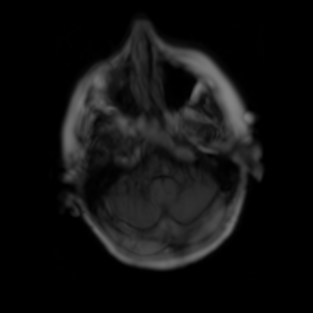}}
    \put(1.4,4.9){ \color{white}{ \bf $\u_9$ \bf} }
    \put(2.2,4.8){\includegraphics[width=0.1\textwidth]{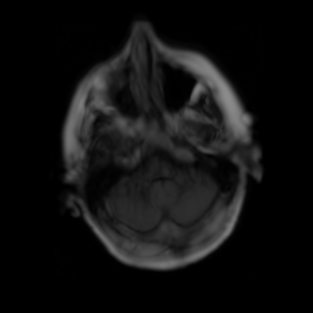}}
    \put(3.4,4.9){ \color{white}{ \bf $\u_{10}$ \bf} }
    \put(4.4,4.8){\includegraphics[width=0.1\textwidth]{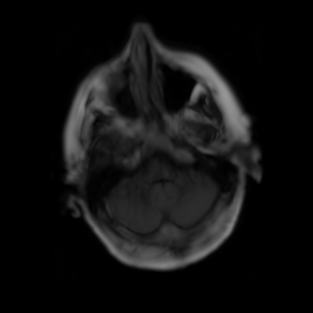}}
    \put(5.6,4.9){ \color{white}{ \bf $\u_{11}$ \bf} }
    \put(6.6,4.8){\includegraphics[width=0.1\textwidth]{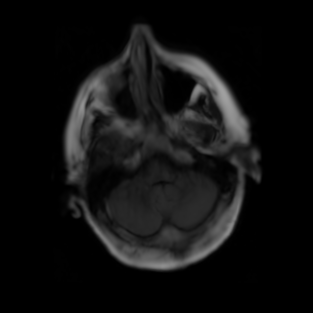}}
    \put(7.8,4.9){ \color{white}{ \bf $\u_{12}$ \bf} }
    \put(8.8,4.8){\includegraphics[width=0.1\textwidth]{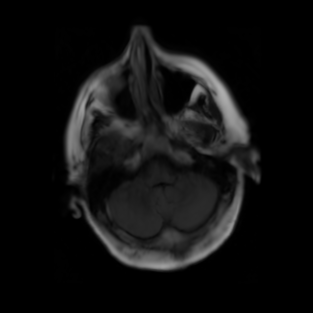}}
    \put(10,4.9){ \color{white}{ \bf $\u_{13}$ \bf} }
    \put(11.0,4.8){\includegraphics[width=0.1\textwidth]{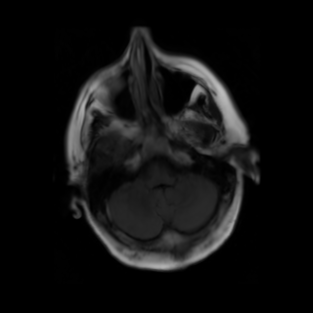}}
    \put(12.2,4.9){ \color{white}{ \bf $\u_{14}$ \bf} }
    \put(13.2,4.8){\includegraphics[width=0.1\textwidth]{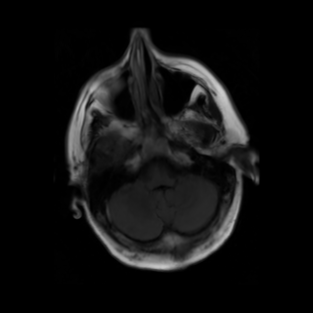}}
    \put(14.4,4.9){ \color{white}{ \bf $\u_{15}$ \bf} }
    \put(15.4,4.8){\includegraphics[width=0.1\textwidth]{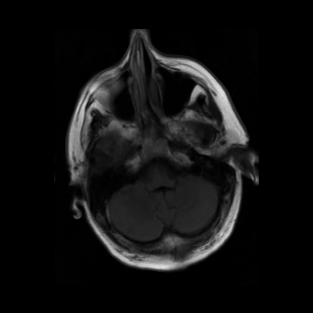}}
    \put(16.6,4.9){ \color{white}{ \bf $\u_{16}$ \bf} }
        \put(2.2,2.2){\includegraphics[width=0.1\textwidth]{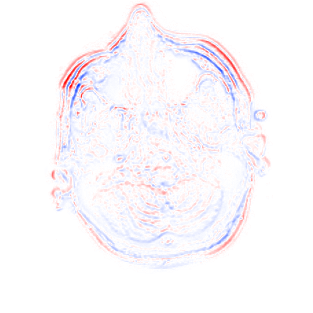}}
    \put(3.6,2.3){ \color{black}{ \bf $z_1$ \bf} }
    \put(4.4,2.2){\includegraphics[width=0.1\textwidth]{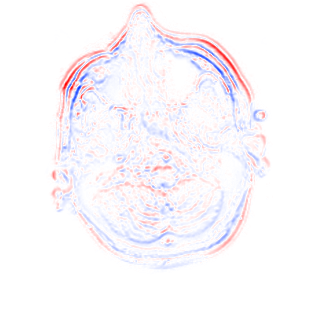}}
    \put(5.8,2.3){ \color{black}{ \bf $z_2$ \bf} }
    \put(6.6,2.2){\includegraphics[width=0.1\textwidth]{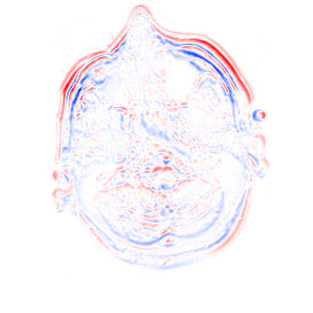}}
    \put(8.0,2.3){ \color{black}{ \bf $z_3$ \bf} }
    \put(8.8,2.2){\includegraphics[width=0.1\textwidth]{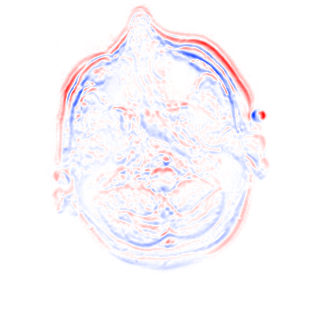}}
    \put(10.2,2.3){ \color{black}{ \bf $z_4$ \bf} }
    \put(11,2.2){\includegraphics[width=0.1\textwidth]{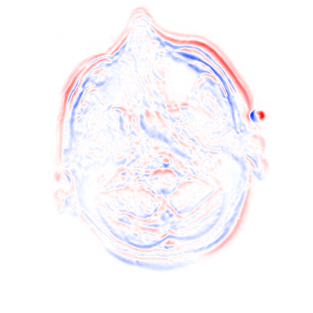}}
    \put(12.4,2.3){ \color{black}{ \bf $z_5$ \bf} }
    \put(13.2,2.2){\includegraphics[width=0.1\textwidth]{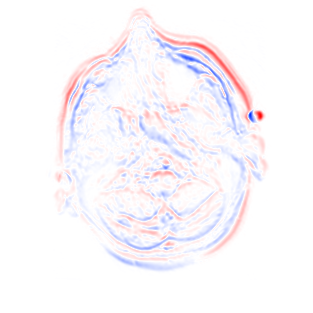}}
    \put(14.6,2.3){ \color{black}{ \bf $z_6$ \bf} }
    \put(15.4,2.2){\includegraphics[width=0.1\textwidth]{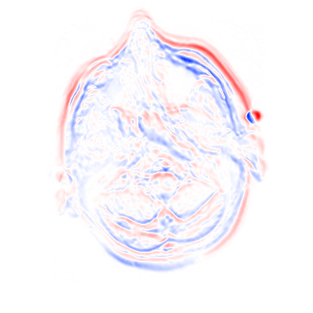}}
    \put(16.8,2.3){ \color{black}{ \bf $z_7$ \bf} }
    \put(17.6,2.2){\includegraphics[width=0.1\textwidth]{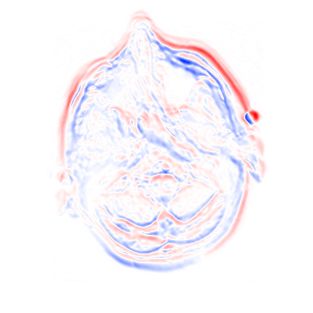}}
    \put(19.0,2.3){ \color{black}{ \bf $z_8$ \bf} }
    \put(0.0,0){\includegraphics[width=0.1\textwidth]{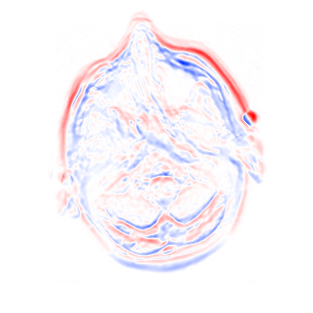}}
    \put(1.4, 0.1){ \color{black}{ \bf $z_9$ \bf} }
    \put(2.2,0){\includegraphics[width=0.1\textwidth]{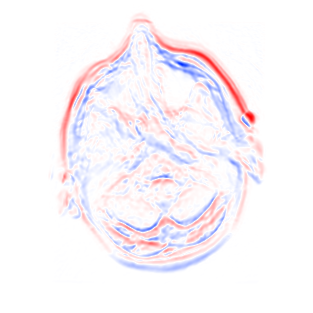}}
    \put(3.6, 0.1){ \color{black}{ \bf $z_{10}$ \bf} }
    \put(4.4,0){\includegraphics[width=0.1\textwidth]{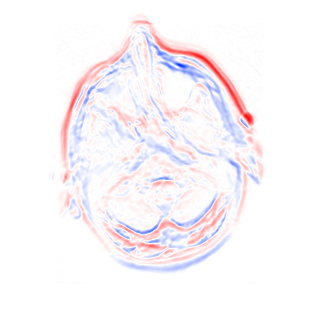}}
    \put(5.8, 0.1){ \color{black}{ \bf $z_{11}$ \bf} }
    \put(6.6,0){\includegraphics[width=0.1\textwidth]{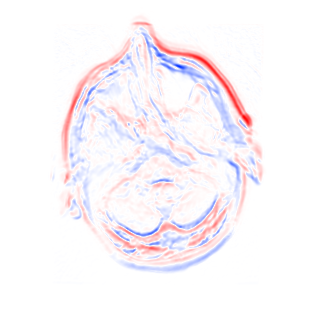}}
    \put(8, 0.1){ \color{black}{ \bf $z_{12}$ \bf} }
    \put(8.8,0){\includegraphics[width=0.1\textwidth]{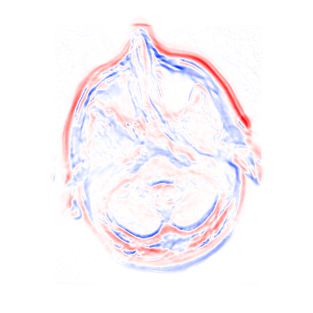}}
    \put(10.2, 0.1){ \color{black}{ \bf $z_{13}$ \bf} }
    \put(11.0,0){\includegraphics[width=0.1\textwidth]{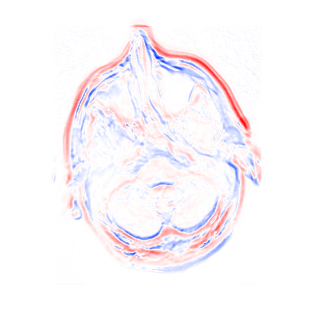}}
    \put(12.4, 0.1){ \color{black}{ \bf $z_{14}$ \bf} }
    \put(13.2,0){\includegraphics[width=0.1\textwidth]{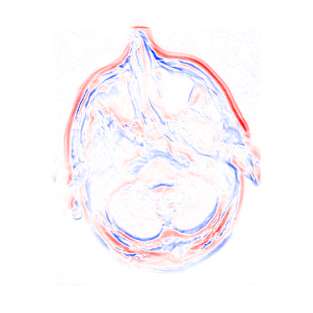}}
    \put(14.6, 0.1){ \color{black}{ \bf $z_{15}$ \bf} }
    \put(15.4,0){\includegraphics[width=0.1\textwidth]{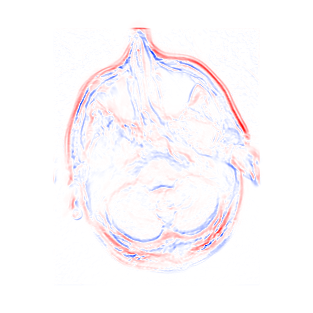}}
    \put(16.8, 0.1){ \color{black}{ \bf $z_{16}$ \bf} }
  \end{picture}
}
\caption{Two slices of the same 3D MRI data set of a human brain are connected with a discrete geodesic (data courtesy of H. Urbach, Neuroradiology, University Hospital Bonn). Top: discrete geodesic with $\penaltyPushforward=10^{-2}$, $\gamma=10^{-1}$, bottom: corresponding values of $z_k = \det(D\phi_k) \u_k \circ \phi_k - \u_{k-1}$ (blue: positive, red: negative).}
\label{fig:MRIs}
\end{figure}
There is  no guarantee that the alternating algorithm converges. To demonstrate the experimental convergence behaviour we choose the application shown in Fig.  \ref{fig:MRIs} and show the evolution of the $l^2$ norm of the difference between consequitive intensities in Fig. \ref{fig:errorplot}.
\begin{figure}[htbp!]
\resizebox{0.2\linewidth}{!}{
  \begin{minipage}[h]{0.5\textwidth}
 \begin{tikzpicture}
    \begin{loglogaxis}[
        xlabel=\textsc{iteration},
        ylabel=$\| \u^j - \u^{j-1} \|_{l^2}$
    ]
    \axispath\draw
            (7.49165,-10.02171)
        |-  (8.31801,-11.32467)
        node[near start,left] {$\frac{dy}{dx} = -1.58$};
      \addplot plot coordinates {
	  (1, 0.00611657)
	  (2, 0.000904781)
	  (3, 0.00048791)
	  (4, 0.000365233)
	  (5, 0.000221409)
	  (6, 0.000212917)
	  (7, 0.000227335)
	  (8, 5.89624e-05)
	  (9, 3.7831e-05)
	  (10, 4.11086e-05)
	  (11, 3.61958e-05)
	  (12, 2.31803e-05)
	  (13, 6.92171e-05)
	  (14, 1.87252e-05)
	  (15, 3.13328e-05)
    };
    \legend{$K=2$}
    \end{loglogaxis}
\end{tikzpicture}
\end{minipage}
}
\resizebox{0.2\linewidth}{!}{
  \begin{minipage}[h]{0.5\textwidth}
\begin{tikzpicture}
    \begin{loglogaxis}[
        xlabel=\textsc{iteration},
    ]
    \axispath\draw
            (7.49165,-10.02171)
        |-  (8.31801,-11.32467)
        node[near start,left] {$\frac{dy}{dx} = -1.58$};
      \addplot plot coordinates {
	  (1, 0.010785)
	  (2, 0.00124814)
	  (3, 0.000774907)
	  (4, 0.000305444)
	  (5, 0.000446813)
	  (6, 0.000293067)
	  (7, 0.000118888)
	  (8, 8.22818e-05)
	  (9, 9.49056e-05)
	  (10, 0.000100292)
	  (11, 0.000294279)
	  (12, 5.69816e-05)
	  (13, 0.000163001)
	  (14, 4.80309e-05)
	  (15, 6.25742e-05)
    };
    \legend{$K=4$}
    \end{loglogaxis}
\end{tikzpicture}
\end{minipage}
}
\resizebox{0.2\linewidth}{!}{
  \begin{minipage}[h]{0.5\textwidth}
\begin{tikzpicture}
    \begin{loglogaxis}[
        xlabel=\textsc{iteration},
    ]
    \axispath\draw
            (7.49165,-10.02171)
        |-  (8.31801,-11.32467)
        node[near start,left] {$\frac{dy}{dx} = -1.58$};
      \addplot plot coordinates {
	  (1, 0.0166897)
	  (2, 0.00190365)
	  (3, 0.000716768)
	  (4, 0.000835214)
	  (5, 0.000225586)
	  (6, 0.000159049)
	  (7, 9.73407e-05)
	  (8, 0.000135303)
	  (9, 9.02976e-05)
	  (10, 0.000157557)
	  (11, 0.000233976)
	  (12, 0.000135323)
	  (13, 7.31397e-05)
	  (14, 0.000180224)
	  (15, 9.03364e-05)
    };
    \legend{$K=8$}
    \end{loglogaxis}
\end{tikzpicture}
\end{minipage}
}
\resizebox{0.2\linewidth}{!}{
  \begin{minipage}[h]{0.5\textwidth}
\begin{tikzpicture}
    \begin{loglogaxis}[
        xlabel=\textsc{iteration},
    ]
    \axispath\draw
            (7.49165,-10.02171)
        |-  (8.31801,-11.32467)
        node[near start,left] {$\frac{dy}{dx} = -1.58$};
      \addplot plot coordinates {
	  (1, 0.0237109)
	  (2, 0.00296791)
	  (3, 0.000992212)
	  (4, 0.000368002)
	  (5, 0.000563277)
	  (6, 0.000201171)
	  (7, 0.00041147)
	  (8, 0.000252613)
	  (9, 0.00021584)
	  (10, 0.000290127)
	  (11, 0.000183656)
	  (12, 0.000180219)
	  (13, 0.000195304)
	  (14, 0.000181668)
	  (15, 0.000125673)   
};   
    \legend{$K=16$}
    \end{loglogaxis}
\end{tikzpicture}
\end{minipage}
}
\label{fig:errorplot}
\caption{The convergence of the alternating descent method is shown for the application in Fig. \ref{fig:MRIs}. For the different levels of the cascadic descent scheme ($K=2,4,8,16$) the $l^2$ norm of the difference of consequitive space time densities $\u^j$ is visualized using log-log plots. }
\end{figure}
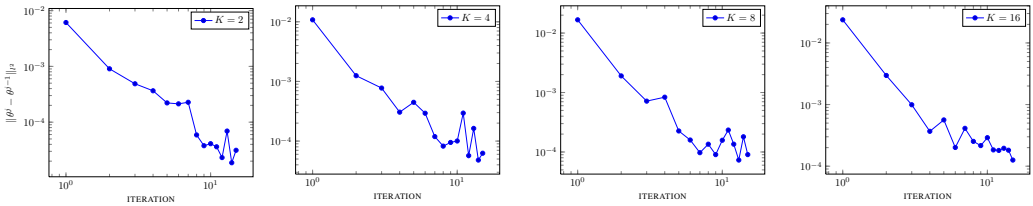

\FloatBarrier
\section{The Benamou-Brenier discretization for the non viscous model}\label{sec:BB}
In this section we numerically compare the proposed approach \eqref{eq:newtransport} with the numerical scheme for optimal transport proposed by Benamou and Brenier \cite{BeBr00}, where the 
mass constraint is relaxed.  After the change of variables  $(\u,v) \mapsto (\u,m = \u v)$  the minimization problem of the discrete path energy is rewritten as
\begin{align*}
 \sup\limits_{z} \min\limits_{\phi} \left(F(B\phi) + G(\phi) + \int_0^1 \int_\domain \phi \cdot z - \frac{1}{\penaltyPushforward} |z|^2 \diff x \diff t\right) \, ,
\end{align*}
where $\phi$ is a Lagrange multiplier introduced to satisfy the condition on $z$, $F$ is the indicator function of the convex set $K=\{ (a,b) \in \R \times \R^d \, : \, a + \frac{|b|^2}{2} \leq 0 \}$, 
$G(\phi) = \int_\domain \phi(0,\cdot) \u_0 - \phi(1,\cdot) \u_1 \diff x$, and $B: \phi \mapsto (\partial_t \phi, \nabla_x \phi)$. 
For the outer maximization in $z$ one gets the optimality condition $z = \frac{\penaltyPushforward}{2} \phi$. The augmented Lagrangian is given by
\begin{align*}
    L_r[\phi,q,\mu] 
 = F(q)+ G(\phi) + \int_0^1 \int_\domain \phi \cdot z - \frac{1}{\penaltyPushforward} |z|^2 + \mu \cdot \left( \nabla_{t,x} \phi - q \right)  + \frac{r}{2}  |\nabla_{t,x} \phi - q |^2 \diff x \diff t \, ,
\end{align*}
with variables $q=(a,b)$, $\mu=(\u,m)$ and Benamou and Brenier propose an alternating gradient descent to compute the saddle point.
Using the fact that $z=\frac{\penaltyPushforward}{2} \phi$, one updates $z$ and $\phi$ simultaneously solving
$
 -r \triangle_{t,x} \phi^n + \frac{\penaltyPushforward}{2} \phi^n  = \div_{t,x} ( \mu^n - r q^{n-1} )
$
with Neumann boundary conditions in time, \ie 
$r \partial_t \phi^n(0,\cdot) = \u_0 - \u^n(0,\cdot) + r a^{n-1}(0,\cdot)$, $r \partial_t \phi^n(1,\cdot) = \u_1 - \u^n(1,\cdot) + r a^{n-1}(1,\cdot)$.
Let us emphasize that in \cite{BeBr00} the second term on the left hand side which reflects the source term already appeared in the original scheme by Benamou and Brenier 
as a regularization term. 

To study the impact of the parameter $\penaltyPushforward$ we pick up the problem already presented in Fig. \ref{fig:differentViscosity}. 
Now, we choose two input bump maps of different mass. Figure \ref{fig:BenamouBrenierExampleDiffMass65} shows discrete geodesics for different $\penaltyPushforward$.
\begin{figure}[htbp!]
\setlength{\unitlength}{.05\linewidth}
\resizebox{0.85\linewidth}{!}{
  \begin{picture}(16,2.4)
    \put(0,0){\includegraphics[width=0.1\textwidth]{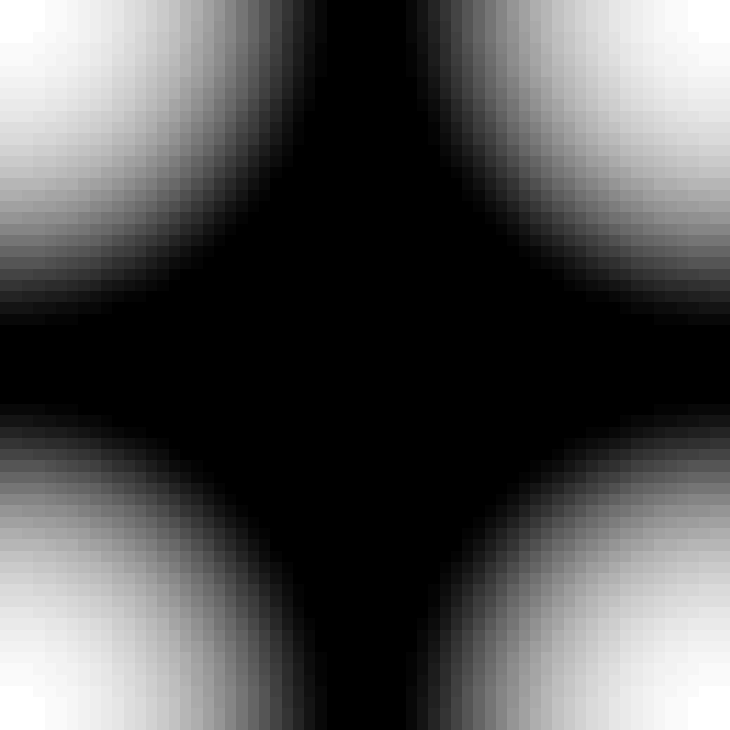}}
    \put(2.2,0){\includegraphics[width=0.1\textwidth]{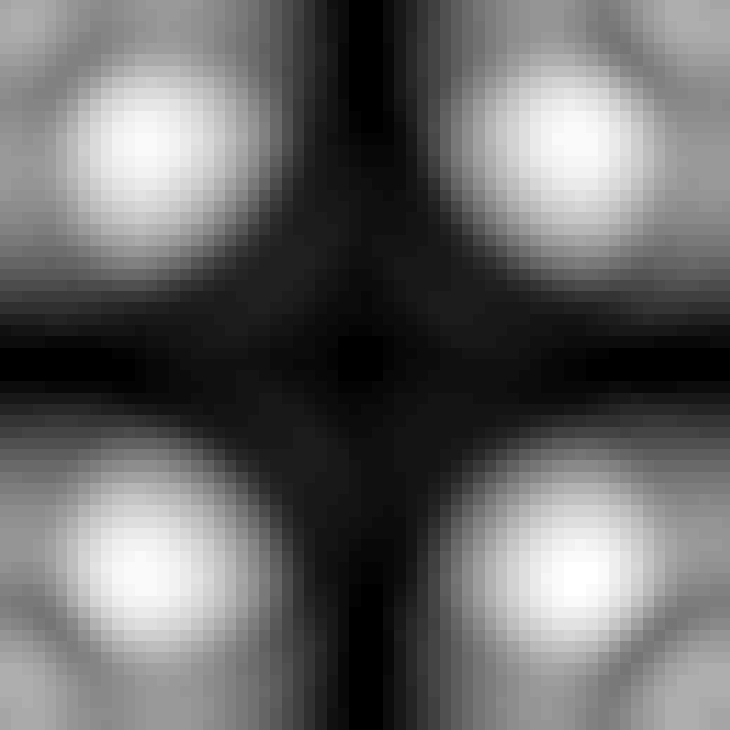}}
    \put(4.4,0){\includegraphics[width=0.1\textwidth]{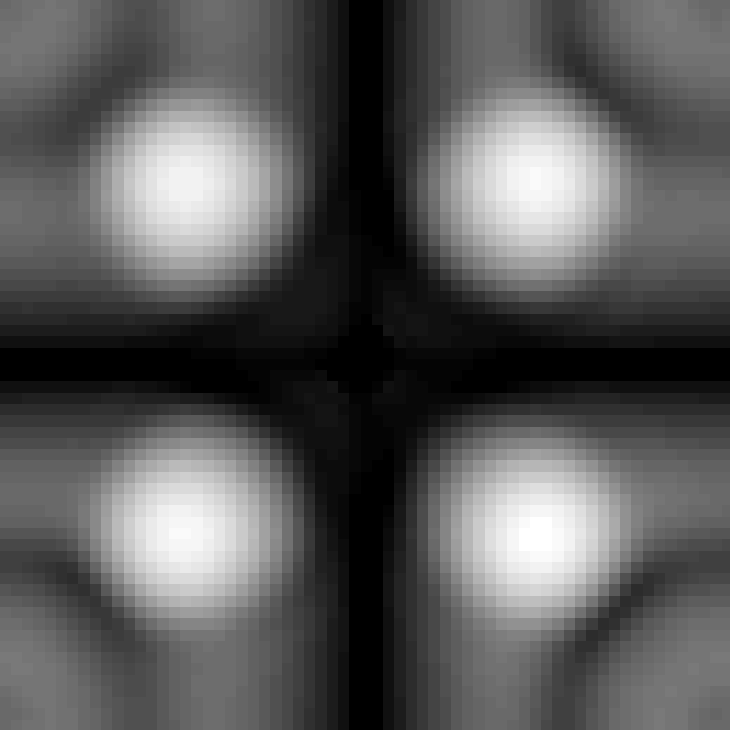}}
    \put(6.6,0){\includegraphics[width=0.1\textwidth]{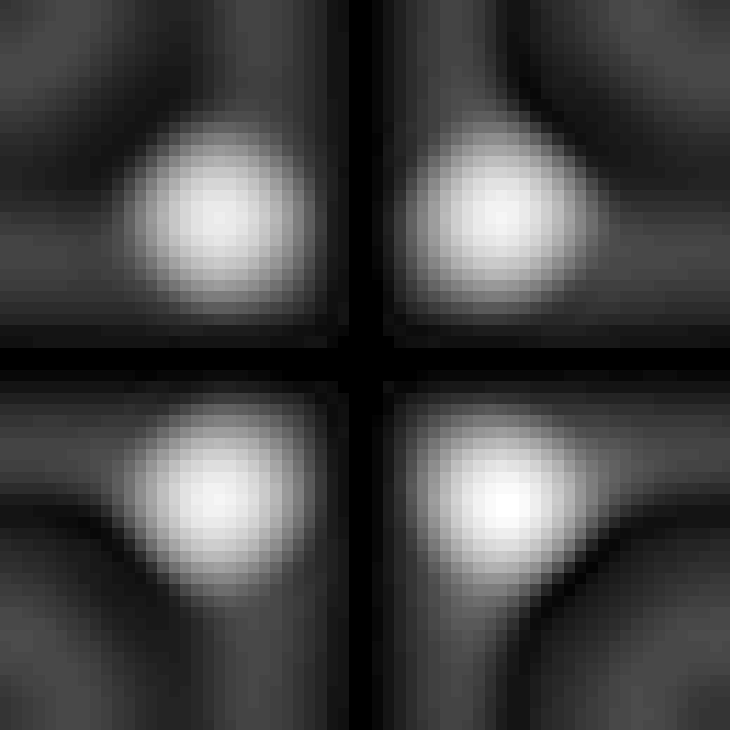}}
    \put(8.8,0){\includegraphics[width=0.1\textwidth]{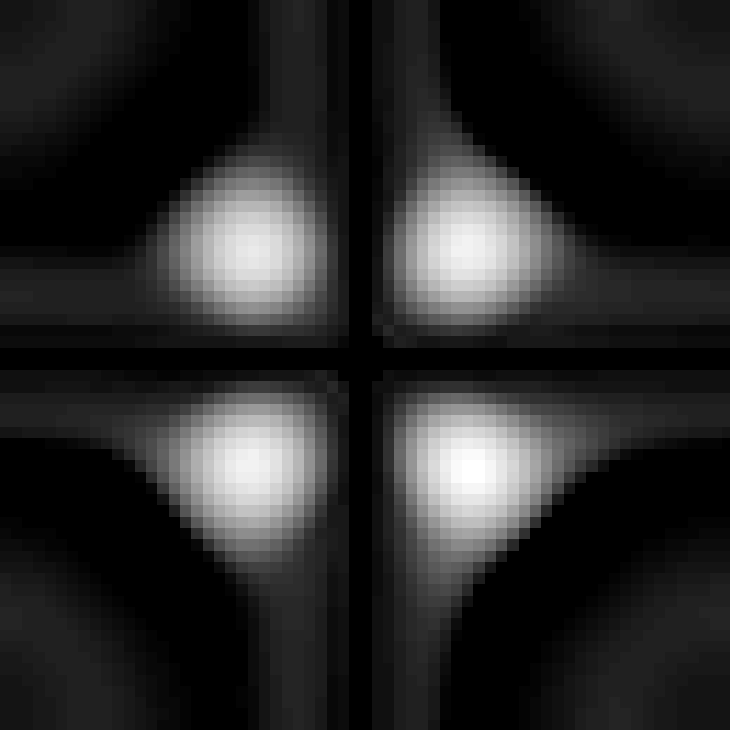}}
    \put(11,0){\includegraphics[width=0.1\textwidth]{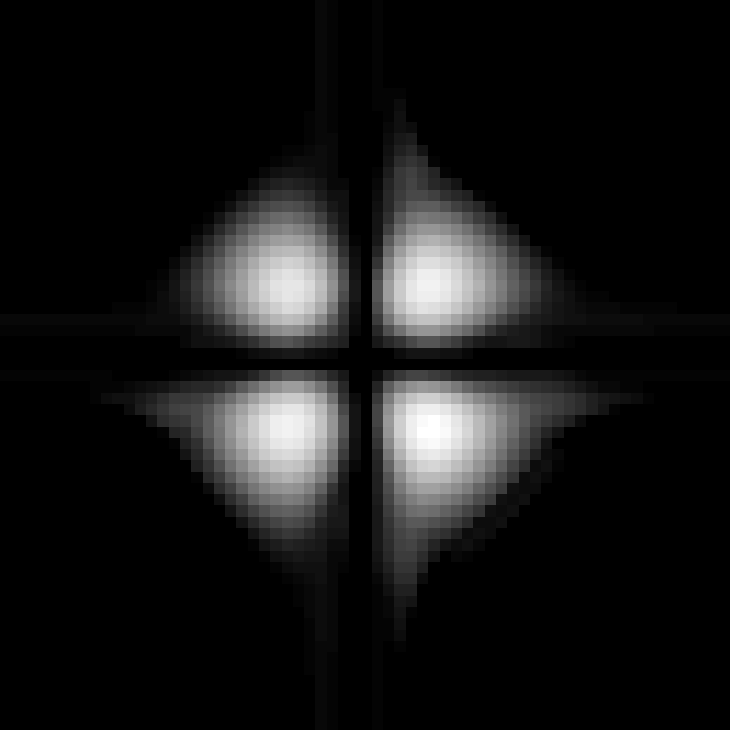}}
    \put(13.2,0){\includegraphics[width=0.1\textwidth]{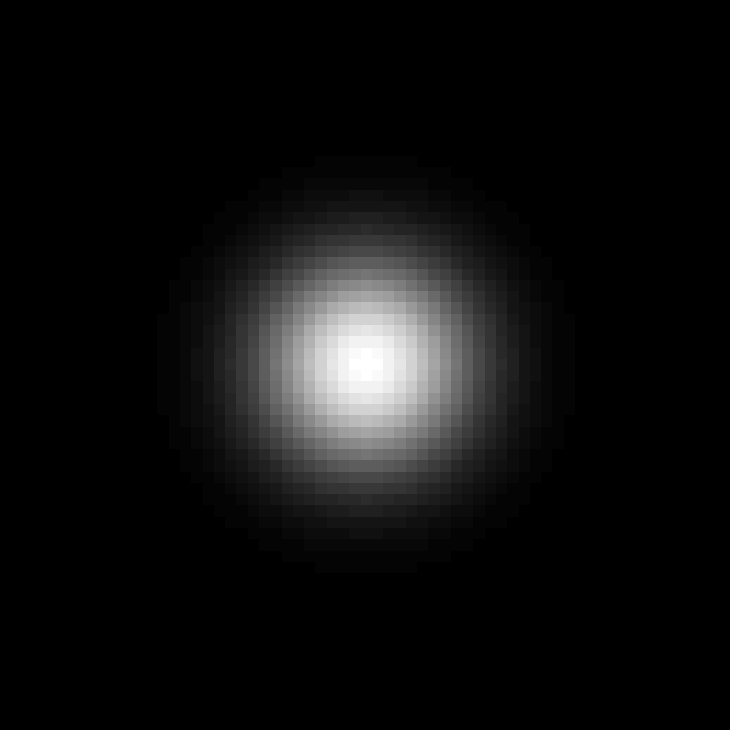}}
  \end{picture}
}
\resizebox{0.85\linewidth}{!}{
  \begin{picture}(16,2.4)
    \put(0,0){\includegraphics[width=0.1\textwidth]{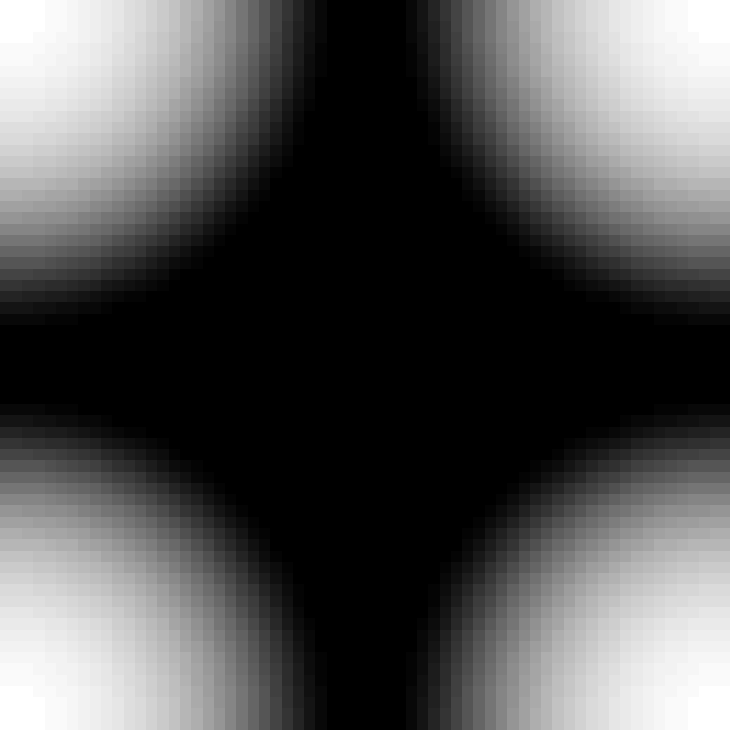}}
    \put(2.2,0){\includegraphics[width=0.1\textwidth]{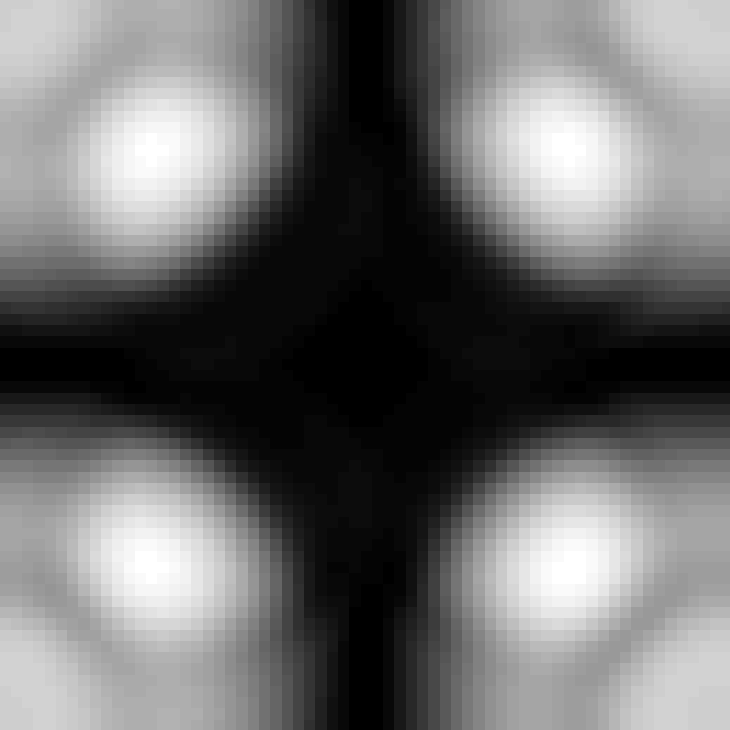}}
    \put(4.4,0){\includegraphics[width=0.1\textwidth]{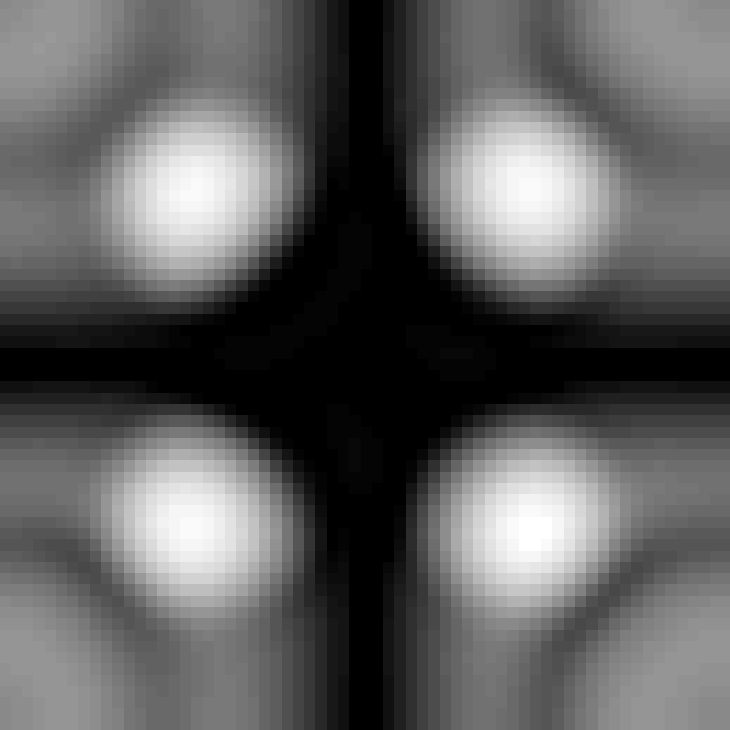}}
    \put(6.6,0){\includegraphics[width=0.1\textwidth]{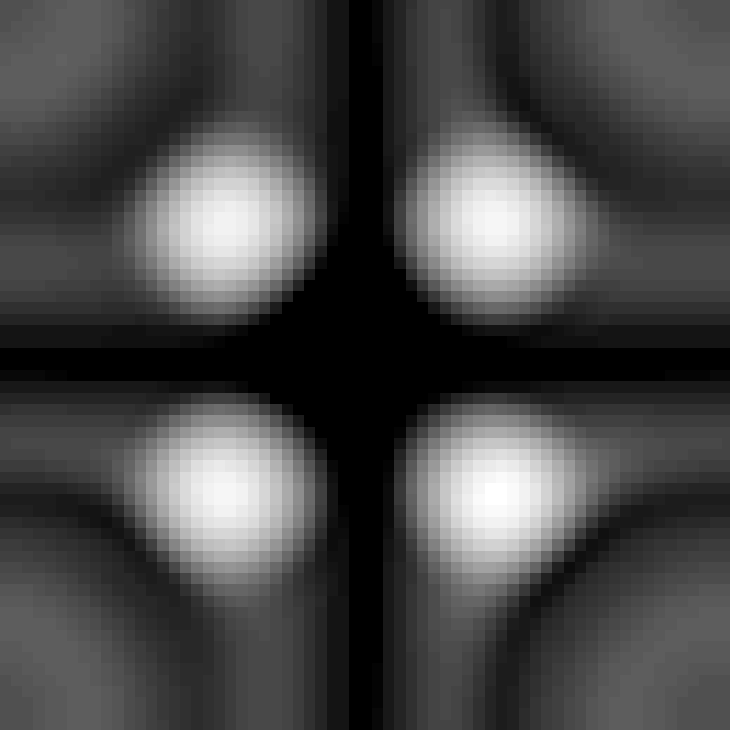}}
    \put(8.8,0){\includegraphics[width=0.1\textwidth]{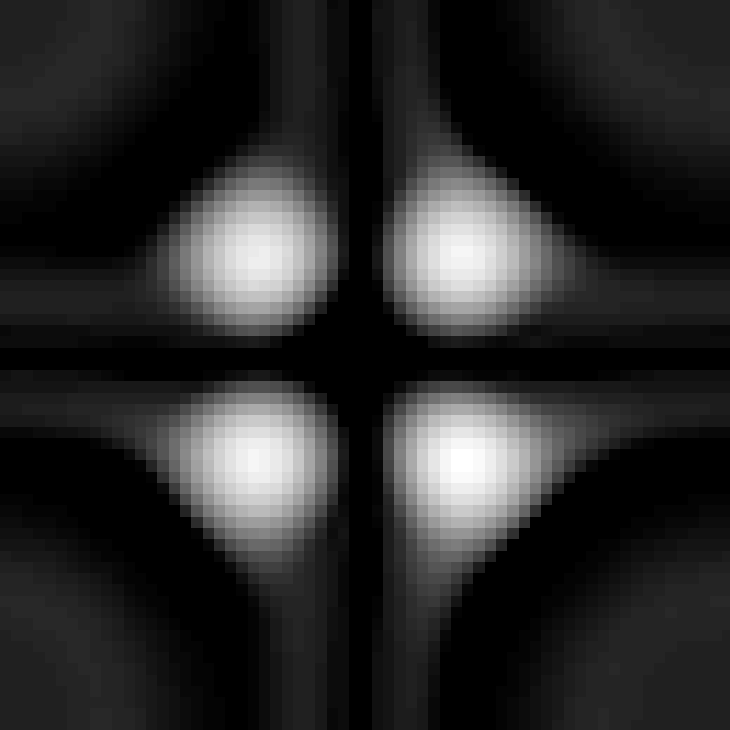}}
    \put(11,0){\includegraphics[width=0.1\textwidth]{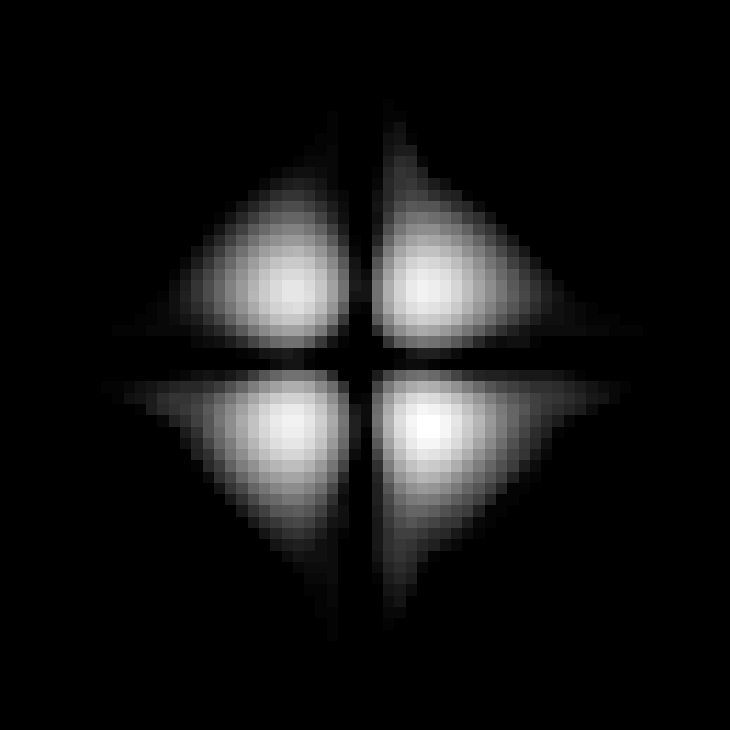}}
    \put(13.2,0){\includegraphics[width=0.1\textwidth]{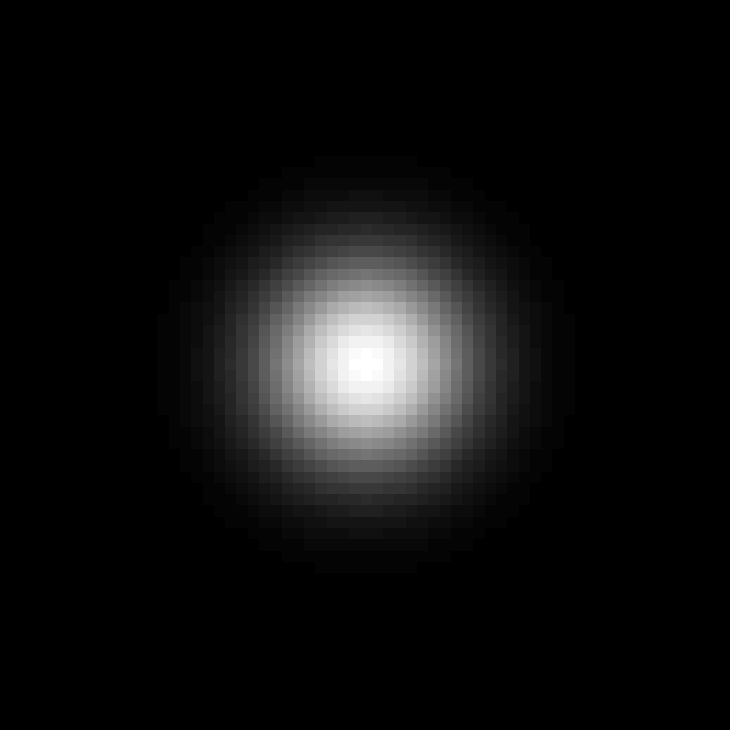}}
  \end{picture}
}
\resizebox{0.85\linewidth}{!}{
  \begin{picture}(16,2.4)
    \put(0,0){\includegraphics[width=0.1\textwidth]{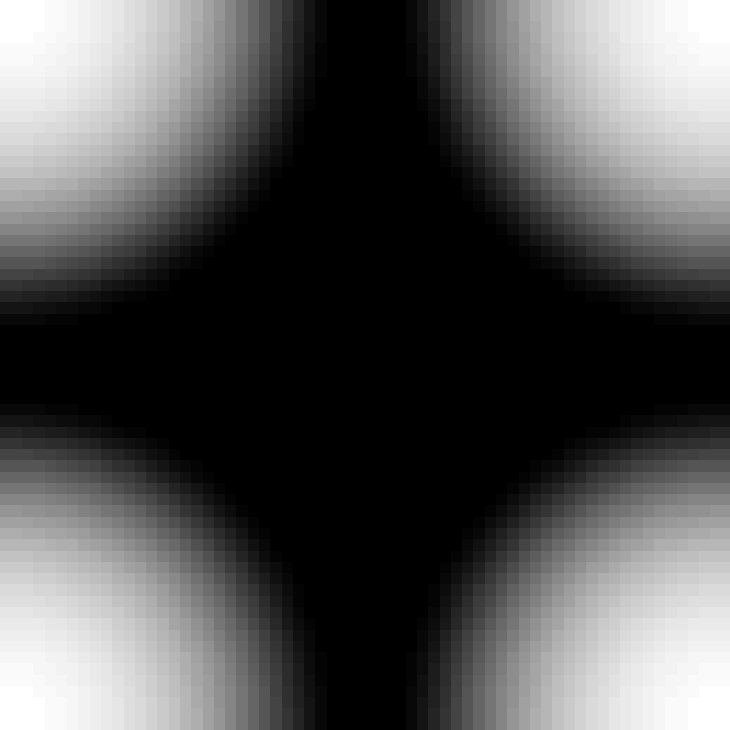}}
    \put(2.2,0){\includegraphics[width=0.1\textwidth]{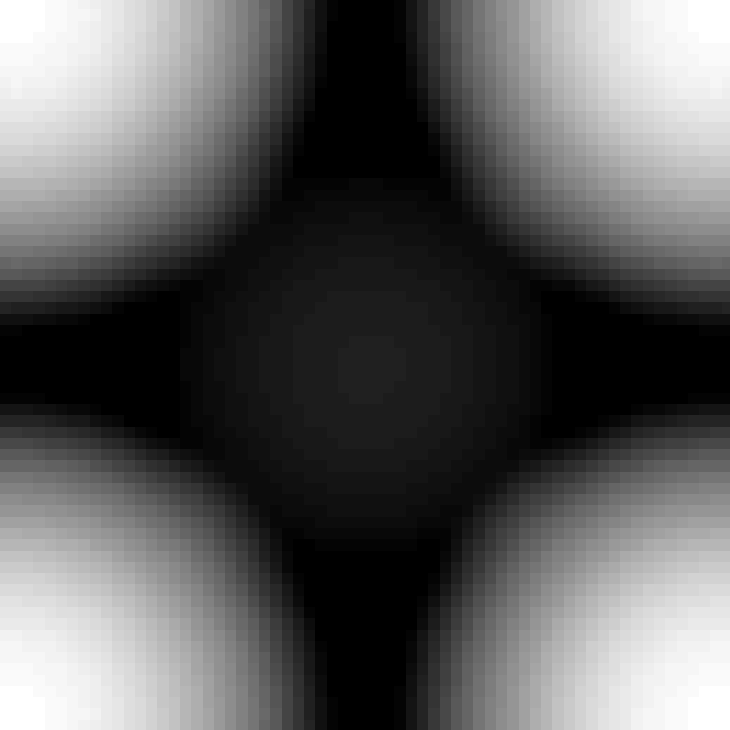}}
    \put(4.4,0){\includegraphics[width=0.1\textwidth]{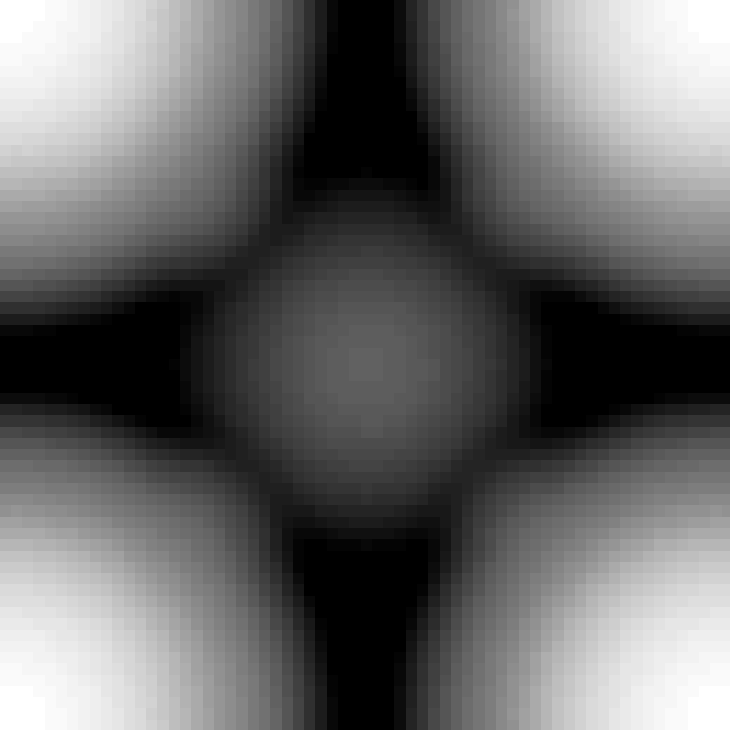}}
    \put(6.6,0){\includegraphics[width=0.1\textwidth]{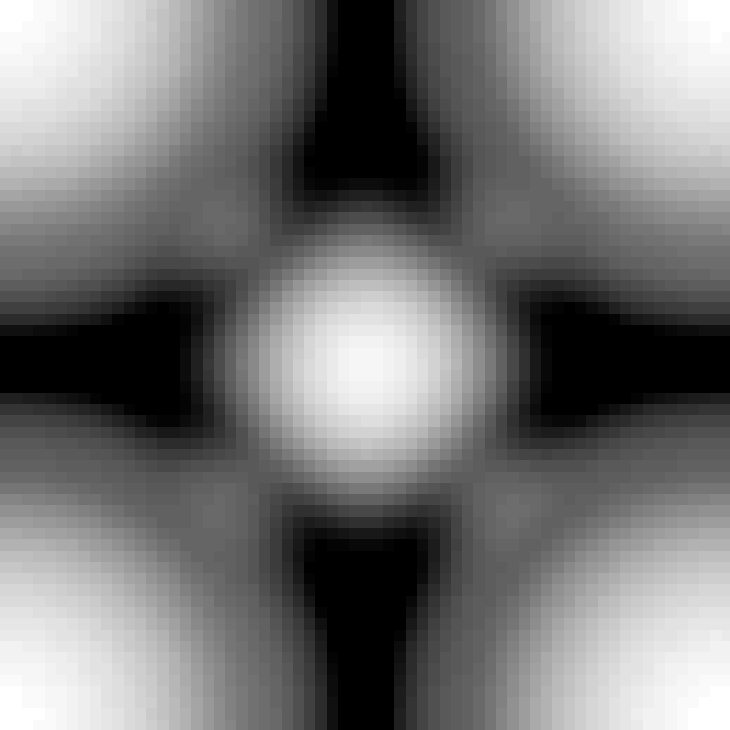}}
    \put(8.8,0){\includegraphics[width=0.1\textwidth]{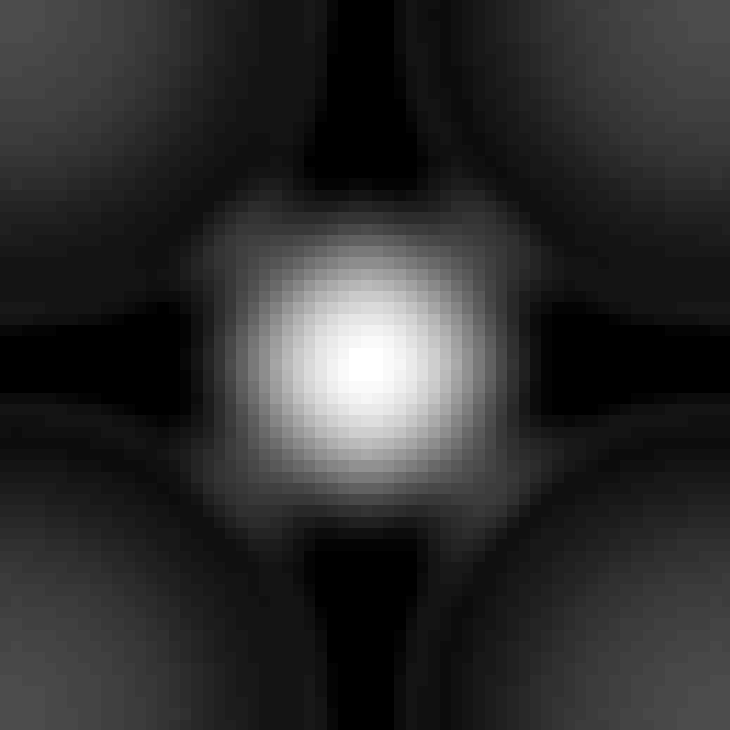}}
    \put(11,0){\includegraphics[width=0.1\textwidth]{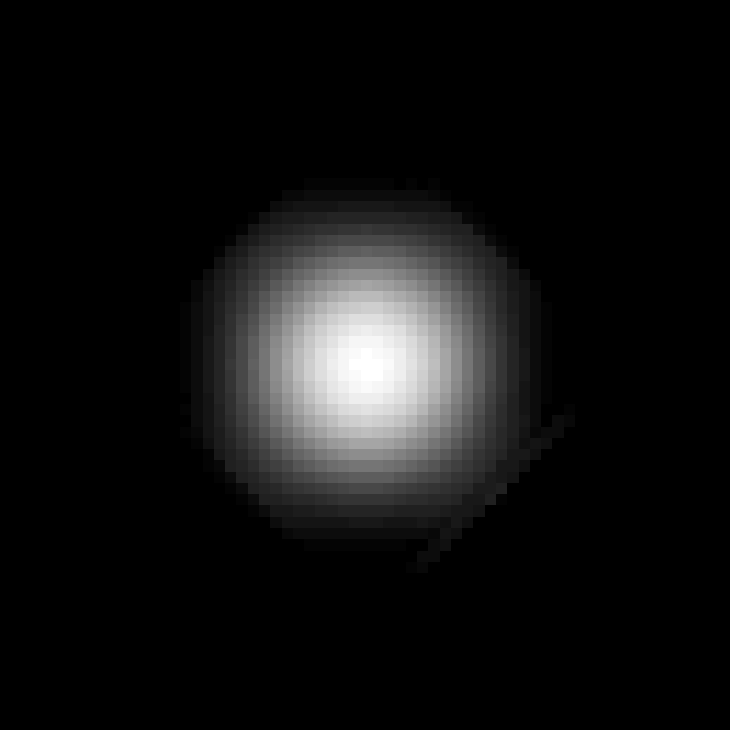}}
    \put(13.2,0){\includegraphics[width=0.1\textwidth]{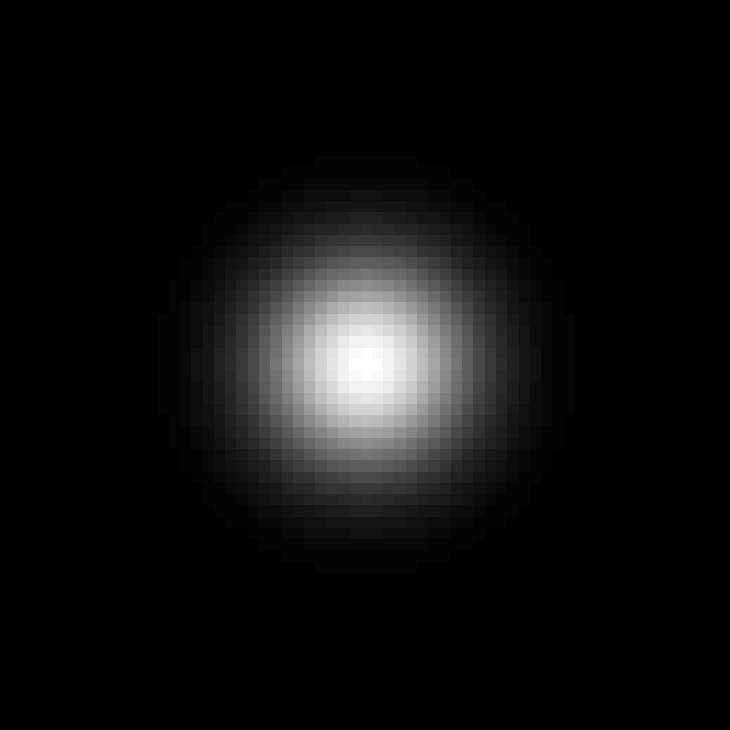}}
  \end{picture}
}
\caption{The Benamou-Brenier discretization applied to optimal transport with relaxed mass constraint for input images with bump maps of different mass.
The rows show equidistributed time steps for a time step size $\tau = \tfrac{1}{60}$ and different $\penaltyPushforward$ (top: $\penaltyPushforward = 1$, middle: $\penaltyPushforward = 10$, bottom: $\penaltyPushforward = 100$). }
\label{fig:BenamouBrenierExampleDiffMass65}
\end{figure}
For large $\penaltyPushforward$ we basically observe pure blending and almost no transport, whereas for smaller $\penaltyPushforward$ mass is first reduced for each bump map leading to a concentration in $4$ bumps 
which are then transported.  The differences to Fig. \ref{fig:BenamouBrenierExampleDiffMassCompare} seem to be due to the presence of still some viscous dissipation.
\section{Application of the variational time discretization to Riemannian barycentres}\label{sec:barycentre}

As a further application of our time discrete geodesics in the space of images we consider the computation of (weighted) discrete barycentres. 
We call $\u^\bary$ the barycentre of $M$ input images $\u^1,\ldots, \u^M$ for given weights $\lambda^1,\ldots,\lambda^M$ with 
$\lambda^m\geq 0$ and $\sum_{m=1}^M \lambda^m = 1$, if $\u^\bary$ minimizes 
\beqn\label{eq:bar}
\Bdg[\u] = \sum^M_{m=1} \lambda^m \Wdg[\u,\u^m]^2 
\nonumber
\eeqn
Next, replacing the time continuous path energy $\Wdg$ by the time discrete energy $\WdgK$ \eqref{eq:WdgK} we ask for a minimizer of the energy
\beqn\label{eq:barK}
\Bdg[\u] = \min_{\substack{(\u_k^m)_{k=0,\ldots, K} \subset \Imagespace \\(\phi_k^m)_{k=1,\ldots, K} \subset \deformationSpace \\ \u = \u^m_0}} \;\;\sum^M_{m=1} \lambda^m \EdgK[\u_0^m, \ldots, \u_K^m,\phi_1^m, \ldots, \phi_K^m]
\nonumber
\eeqn
over $M$ discrete image paths $(\u_k^m)_{k=0,\ldots, K}$  ($m=1,\ldots, M$) and $M$ discrete families  $(\phi_k^m)_{k=1,\ldots, K}$ ($m=1,\ldots, M$)
with the last image of the $m$ discrete image paths being the $m$th input image ($\u_K^m = \u^m$) and the additional constraint that
the set of first images being all equal to $\u$ ( $\u = \u^m_0$ for all $m=1,\ldots, M$).

The necessary conditions for the images $\u_k^m$ and the deformations $\phi_k^m$ ($k=1,\ldots, M$, $m=1,\ldots, M$) 
are identical to those for simple discrete geodesics connecting the corresponding pair of images $(\u, \u^m)$. Solely the condition for the barycentre image itself
changes to 
\beqn\label{eq:ELbary}
\u^\bary(x) =  \max\left(0, \sum_{m=1}^M \lambda^m \left(\det( D \phi_1^m ) \u_1^m(\phi_1^m(x)) - \frac{\penaltyPushforward}{2} |\phi_1^m(x) - \id|^2 \right)\right)\,.
\nonumber
\eeqn
Finally, we take into account the spatial discretization introduced in Section \ref{sec:SpatialDiscretization} and define $\U^\bary$ as the fully discrete, weighted barycentre of the  
input images $\U^1, \ldots, \U^M$,  if $\U^\bary$ minimizes the energy 
\beqan\label{eq:bardiscrete}
\BdgKh[\u] &=& \sum^M_{m=1} \lambda^m \WdgKh[\U,\U^m]^2\\
&=&  \min_{\substack{(\U_k^m)_{k=0,\ldots, K} \subset \Imagespace \\(\Phi_k^m)_{k=1,\ldots, K} \subset \deformationSpace \\ \U = \U^m_0}} \; \sum^M_{m=1} \lambda^m \EdgKh[\U_0^m, \ldots, \U_K^m,\Phi_1^m, \ldots, \Phi_K^m]\,.
\nonumber
\eeqan
Again for fixed deformations $(\bar \Phi^m_k)_{\substack{k=1,\ldots, K,\\ m=1,\ldots, M}}$ and skipping the non negativity constraint for the densities one obtains a system of linear equations to be solved for $(\bar \U^m_k)_{\substack{k=0,\ldots, K-1,\\ m=1,\ldots, M}}$ with $\bar \U^1_0 = \ldots = \bar \U^M_0 = \bar \U^\bary$.
This linear system consists of $M$ copies of the equations for $(\bar \U_1,\ldots, \bar \U_{K-1})$ in the system, where we replace $\bar \U_k$ by $\bar \U_k^m$ and 
$\Phi_k$ by $\Phi_k^m$, and an additional set of equations for $\bar \U^\bary$, \ie
\beqn\label{eq:ELbaryh}
\mass_h[\Id,\Id] \bar \U^\bary = \sum_{m=1}^M \lambda^m \left( \mass_h[\det( D \Phi_1^m ),\Phi^m_1,\Id]\bar \U_1^m - \frac{\penaltyPushforward}{2} \mass_h[|\Phi_1^m - \id|^2,\Id,\Id] \bar 1  \right)\,.
\nonumber
\eeqn
Still, a slightly modified strict convexity argument proves that the energy $\EdgKh$ is strictly convex in the images $\U_k^m$ for $k=1,\ldots, K$, $m=1,\ldots, M$ and in the 
additional  image $\U^\bary$. In particular, there exists a unique solution of the linear system. Let us remark that this is no longer clear if we replace $\WdgKh[\U^m,\U]$ by
$\WdgKh[\U,\U^m]$ in the definition of the fully discrete barycenter in \eqref{eq:bardiscrete}.
In the implementation, we apply an analogous alternating descent scheme as described in Section \ref{sec:SpatialDiscretization} to compute fully discrete approximations of the weighted Riemannian barycenter.
Furthermore, we use a cascadic approach, starting with coarse time discretizations and then successively refining the discretization in time.
Figure \ref{fig:beetsBarycenterEnergy} shows barycenters (with equal weights $\lambda = \frac1M$) for three different sets of sugar beet slices extracted from noninvasive 3D MRI images at different 
days after plantation for different viscous dissipation parameters $\gamma$. Furthermore, we show the variability of the different contributions to the path energy between the barycenter and the input images for all input sugar beets. Finally, we display in Figure~\ref{fig:woodBarycentreForward} weighted barycenters of three different wood textures with all admissible combinations of $\lambda_m \in \{0,\, \frac13,\, \frac23, \,1\}$. 
\begin{figure}[htbp!]
\centering
 \resizebox{0.85\linewidth}{!}{
  \begin{minipage}[h]{1.0\textwidth} 
    \setlength{\unitlength}{.05\linewidth}
      \begin{picture}(17,2.2)
                  \put(-2.1, 1.6){ \color{black}{ \bf day $=69$ \bf} }
      \put(0,0){\includegraphics[width=0.065\textwidth]{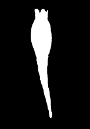}}
      \put(0.7, 0.1){ \color{white}{ \tiny $\u_1$ \tiny} }
      \put(1.5,0){\includegraphics[width=0.065\textwidth]{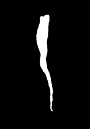}}
      \put(2.2, 0.1){ \color{white}{ \tiny $\u_2$ \tiny} }
      \put(3,0){\includegraphics[width=0.065\textwidth]{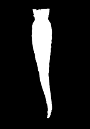}}
      \put(3.7, 0.1){ \color{white}{ \tiny $\u_3$ \tiny} }
      \put(4.5,0){\includegraphics[width=0.065\textwidth]{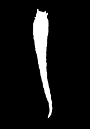}}
      \put(5.2, 0.1){ \color{white}{ \tiny $\u_4$ \tiny} }
      \put(6, 0){\includegraphics[width=0.065\textwidth]{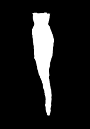}}
      \put(6.7, 0.1){ \color{white}{ \tiny $\u_5$ \tiny} }
      \put(7.5,0){\includegraphics[width=0.065\textwidth]{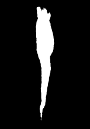}}
      \put(8.2, 0.1){ \color{white}{ \tiny $\u_6$ \tiny} }
      \put(9,0){\includegraphics[width=0.065\textwidth]{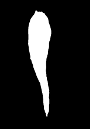}}
      \put(9.7, 0.1){ \color{white}{ \tiny $\u_7$ \tiny} }
      \put(11,0){\includegraphics[width=0.065\textwidth]{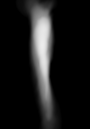}}
      \put(11.6, 0.1){ \color{white}{ \tiny $\u_b^{-2}$ } \tiny }
      \put(12.5,0){\includegraphics[width=0.065\textwidth]{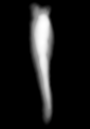}}
      \put(13.1, 0.1){ \color{white}{ \tiny $\u_b^{-1}$ \tiny} }
      \put(14,0){\includegraphics[width=0.065\textwidth]{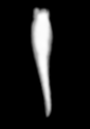}}
      \put(14.7, 0.1){ \color{white}{ \tiny $\u_b^0$ \tiny} }
      \put(15.5,0){\includegraphics[width=0.065\textwidth]{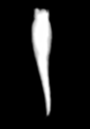}}
      \put(16.2, 0.1){ \color{white}{ \tiny $\u_b^1$ \tiny} }
    \end{picture}
    \begin{tikzpicture}[scale=0.9]
      \draw[->] (0.3,-0.2) -- (0.3,1.5);

      \draw(0.6+0.5*0,-0.2) node{\tiny T};
      \draw(0.6+0.5*1,-0.2) node{\tiny Z};
      \draw(0.6+0.5*2,-0.2) node{\tiny V};

      \foreach \v/\w in {
      0/0.00181229,       1/0.00145615, 
      2/0.00101358,
                        0/0.000469023,       1/0.00525616,
      2/0.00100055,
      0/0.00145031,       1/0.006667,
      2/0.00129068,
      0/0.00271816,       1/0.00488122,
      2/0.00112419,
      0/0.00367197,       1/0.00169383,
      2/0.00156182,
      0/0.00410892,       1/0.00842908,
      2/0.0015314
      }
      {
        \draw[line width=1pt]  (0.5 + 0.5*\v,150*\w) -- (0.5 + 0.5*\v+0.3,150*\w);
      };
      
      \draw[-] (0.1,0) -- (2.0,0);
      
    \end{tikzpicture}
  \end{minipage}
  }
  
  \resizebox{0.85\linewidth}{!}{
  \begin{minipage}[h]{1.0\textwidth} 
    \setlength{\unitlength}{.05\linewidth}
      \begin{picture}(17,2.2)
                  \put(-2.1, 1.6){ \color{black}{ \bf day $= 83$ \bf} }
      \put(0,0){\includegraphics[width=0.065\textwidth]{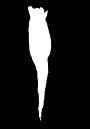}}
      \put(0.7, 0.1){ \color{white}{ \tiny $\u_1$ \tiny} }
      \put(1.5,0){\includegraphics[width=0.065\textwidth]{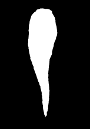}}
      \put(2.2, 0.1){ \color{white}{ \tiny $\u_2$ \tiny} }
      \put(3,0){\includegraphics[width=0.065\textwidth]{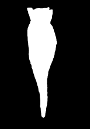}}
      \put(3.7, 0.1){ \color{white}{ \tiny $\u_3$ \tiny} }
      \put(4.5,0){\includegraphics[width=0.065\textwidth]{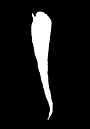}}
      \put(5.2, 0.1){ \color{white}{ \tiny $\u_4$ \tiny} }
      \put(6, 0){\includegraphics[width=0.065\textwidth]{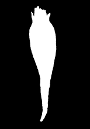}}
      \put(6.7, 0.1){ \color{white}{ \tiny $\u_5$ \tiny} }
      \put(7.5,0){\includegraphics[width=0.065\textwidth]{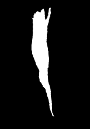}}
      \put(8.2, 0.1){ \color{white}{ \tiny $\u_6$ \tiny} }
      \put(9,0){\includegraphics[width=0.065\textwidth]{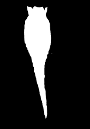}}
      \put(9.7, 0.1){ \color{white}{ \tiny $\u_7$ \tiny} }
      \put(11,0){\includegraphics[width=0.065\textwidth]{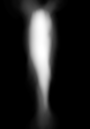}}
      \put(11.6, 0.1){ \color{white}{ \tiny $\u_b^{-2}$ } \tiny }
      \put(12.5,0){\includegraphics[width=0.065\textwidth]{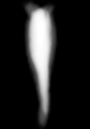}}
      \put(13.1, 0.1){ \color{white}{ \tiny $\u_b^{-1}$ \tiny} }
      \put(14,0){\includegraphics[width=0.065\textwidth]{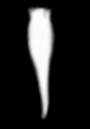}}
      \put(14.7, 0.1){ \color{white}{ \tiny $\u_b^0$ \tiny} }
      \put(15.5,0){\includegraphics[width=0.065\textwidth]{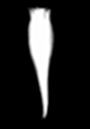}}
      \put(16.2, 0.1){ \color{white}{ \tiny $\u_b^1$ \tiny} }
    \end{picture}
    \begin{tikzpicture}[scale=0.9]
      \draw[->] (0.3,-0.2) -- (0.3,1.5);

      \draw(0.6+0.5*0,-0.2) node{\tiny T};
      \draw(0.6+0.5*1,-0.2) node{\tiny Z};
      \draw(0.6+0.5*2,-0.2) node{\tiny V};

      \foreach \v/\w in {
            0/0.00226764,       1/0.00100554,
      2/0.00122788,
      0/0.00305955,       1/0.003439315,
      2/0.00146852,
      0/0.00188028,       1/0.00756415,
      2/0.00194645
      0/0.00469654,       1/0.0081694,
      2/0.00229808,
      0/0.00591884,       1/0.002276935
      2/0.00140606
      0/0.0046859,       1/0.001007435,
      2/0.00282609,
      0/0.010133,       1/0.00113646,
      2/0.00143406
      }
      {
        \draw[line width=1pt]  (0.5 + 0.5*\v,150*\w) -- (0.5 + 0.5*\v+0.3,150*\w);
      };
      
      \draw[-] (0.1,0) -- (2.0,0);
    \end{tikzpicture}
  \end{minipage}
  }
  
  \resizebox{0.85\linewidth}{!}{
  \begin{minipage}[h]{1.0\textwidth} 
    \setlength{\unitlength}{.05\linewidth}
      \begin{picture}(17,2.2)
                  \put(-2.1, 1.6){ \color{black}{ \bf day $=109$ \bf} }
      \put(0,0){\includegraphics[width=0.065\textwidth]{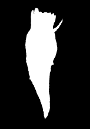}}
      \put(0.7, 0.1){ \color{white}{ \tiny $\u_1$ \tiny} }
      \put(1.5,0){\includegraphics[width=0.065\textwidth]{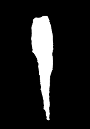}}
      \put(2.2, 0.1){ \color{white}{ \tiny $\u_2$ \tiny} }
      \put(3,0){\includegraphics[width=0.065\textwidth]{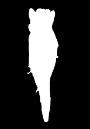}}
      \put(3.7, 0.1){ \color{white}{ \tiny $\u_3$ \tiny} }
      \put(4.5,0){\includegraphics[width=0.065\textwidth]{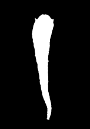}}
      \put(5.2, 0.1){ \color{white}{ \tiny $\u_4$ \tiny} }
      \put(6, 0){\includegraphics[width=0.065\textwidth]{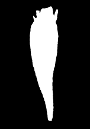}}
      \put(6.7, 0.1){ \color{white}{ \tiny $\u_5$ \tiny} }
      \put(7.5,0){\includegraphics[width=0.065\textwidth]{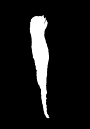}}
      \put(8.2, 0.1){ \color{white}{ \tiny $\u_6$ \tiny} }
      \put(9,0){\includegraphics[width=0.065\textwidth]{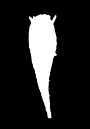}}
      \put(9.7, 0.1){ \color{white}{ \tiny $\u_7$ \tiny} }
      \put(11,0){\includegraphics[width=0.065\textwidth]{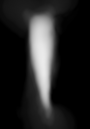}}
      \put(11.6, 0.1){ \color{white}{ \tiny $\u_b^{-2}$ } \tiny }
      \put(12.5,0){\includegraphics[width=0.065\textwidth]{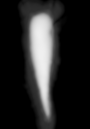}}
      \put(13.1, 0.1){ \color{white}{ \tiny $\u_b^{-1}$ \tiny} }
      \put(14,0){\includegraphics[width=0.065\textwidth]{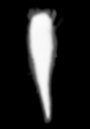}}
      \put(14.7, 0.1){ \color{white}{ \tiny $\u_b^0$ \tiny} }
      \put(15.5,0){\includegraphics[width=0.065\textwidth]{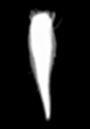}}
      \put(16.2, 0.1){ \color{white}{ \tiny $\u_b^1$ \tiny} }
    \end{picture}
    \begin{tikzpicture}[scale=0.9]
      \draw[->] (0.3,-0.2) -- (0.3,1.5);

      \draw(0.6+0.5*0,-0.2) node{\tiny T};
      \draw(0.6+0.5*1,-0.2) node{\tiny Z};
      \draw(0.6+0.5*2,-0.2) node{\tiny V};

      \foreach \v/\w in {
            0/0.00432272,       1/0.004,
      2/0.00377505,
      0/0.00438524,       1/0.00219678,
      2/0.00187723,
      0/0.00219663,       1/0.00399182,
      2/0.00329378,
      0/0.003371,       1/0.00470378,
      2/0.00271384,
      0/0.00843153,       1/0.00248174,
      2/0.00308222,
      0/0.00478247,       1/0.00603610,
      2/0.00326923,
      0/0.0033997,       1/0.000464624,
      2/0.00226188
      }
      {
        \draw[line width=1pt]  (0.5 + 0.5*\v,150*\w) -- (0.5 + 0.5*\v+0.3,150*\w);
      };
      
      \draw[-] (0.1,0) -- (2.0,0);
      
    \end{tikzpicture}
  \end{minipage}
  }
  \caption{Barycentres of different sets of sugar beet slices (left) for $\penaltyPushforward = 10^{-1}$ and for $\gamma=10^{-2},\, 10^{-1},\, 1,\,10$ (from left to right with $\u_b^j$ corresponding to $\gamma = 10^{j}$). 
  On the right the transport cost (T), density modulation cost (Z), and viscous dissipation cost (V) are plotted for all input slices in the case $\gamma=1$ (cost values for the second beet at $day 69$ are excluded as outliers). (data provided by research network CROP.SENSe.net)}
\label{fig:beetsBarycenterEnergy}
\end{figure} 

\begin{figure}[htbp!]
\setlength{\unitlength}{0.06\textwidth}
  \resizebox{0.85\linewidth}{!}{
    \begin{picture}(22,11.5)
            \put(4.5, 9){\includegraphics[width=0.17\textwidth]{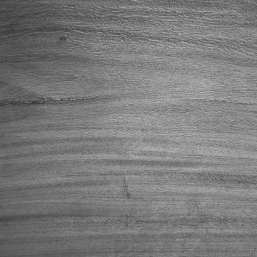}}       \put(4.5, 9.2){ \color{white}{ \bf (0,0,1) \bf} }
      \put(0,0){\includegraphics[width=0.17\textwidth]{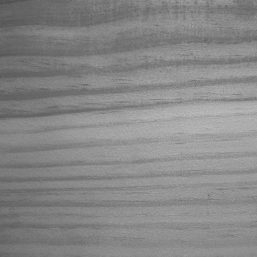}}       \put(0,0.2){ \color{white}{ \bf (1,0,0) \bf} }
      \put(9,0){\includegraphics[width=0.17\textwidth]{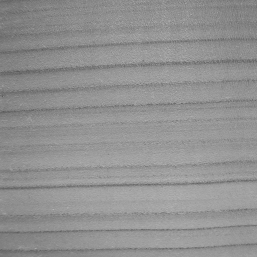}}       \put(9,0.2){ \color{white}{ \bf (0,1,0) \bf} }
      \put(4.5, 3){\includegraphics[width=0.17\textwidth]{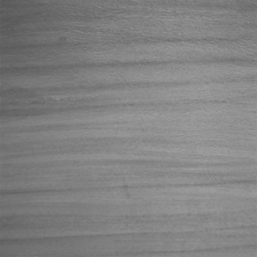}}       \put(4.5, 3.2){ \color{white}{ \bf ($\frac{1}{3}$,$\frac{1}{3}$,$\frac{1}{3}$) \bf} }
      \put(1.5, 3){\includegraphics[width=0.17\textwidth]{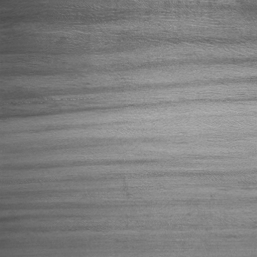}}
      \put(1.5, 3.2){ \color{white}{ \bf ($\frac{1}{3}$,0,$\frac{2}{3}$) \bf} }
      \put(3.0, 6){\includegraphics[width=0.17\textwidth]{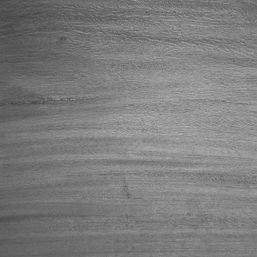}}
      \put(3.0, 6.2){ \color{white}{ \bf ($\frac{2}{3}$,0,$\frac{1}{3}$) \bf} }
      \put(3.0,0){\includegraphics[width=0.17\textwidth]{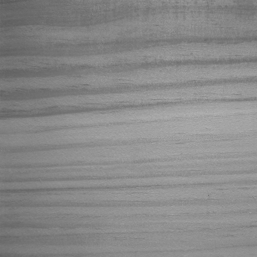}}
      \put(3.0, 0.2){ \color{white}{ \bf ($\frac{2}{3}$,$\frac{1}{3}$,0) \bf} }
      \put(6,0){\includegraphics[width=0.17\textwidth]{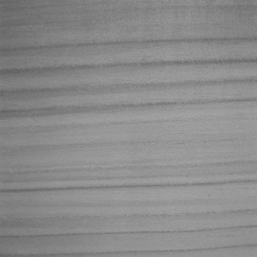}}
      \put(6, 0.2){ \color{white}{ \bf ($\frac{1}{3}$,$\frac{2}{3}$,0) \bf} }
      \put(6, 6){\includegraphics[width=0.17\textwidth]{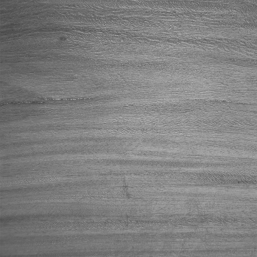}}
      \put(6, 6.2){ \color{white}{ \bf (0,$\frac{1}{3}$,$\frac{2}{3}$) \bf} }
      \put(7.5, 3){\includegraphics[width=0.17\textwidth]{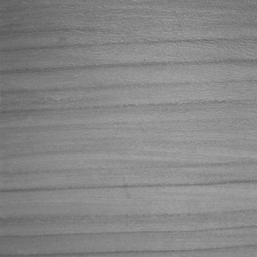}}
      \put(7.5, 3.2){ \color{white}{ \bf (0,$\frac{2}{3}$,$\frac{1}{3}$) \bf} }
      
            \put(13.25, 9.5){\includegraphics[width=0.13\textwidth]{Images/TriangleBarycenterWoodForward/woodBarycenter.png}}
      \put(13.25, 9.7){ \color{white}{ \bf ($\frac{1}{3}$,$\frac{1}{3}$,$\frac{1}{3}$) \bf} }
      \put(13.25, 7.125){\includegraphics[width=0.13\textwidth]{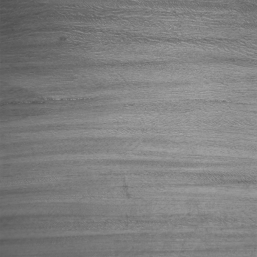}} 
      \put(13.25, 4.75){\includegraphics[width=0.13\textwidth]{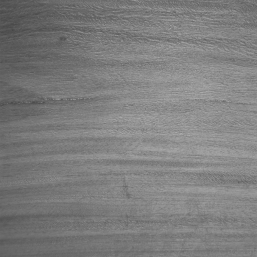}}
      \put(13.25, 2.375){\includegraphics[width=0.13\textwidth]{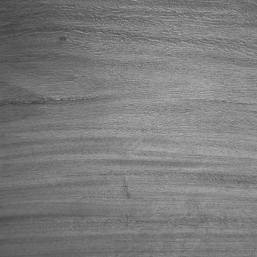}}
      \put(13.25, 0){\includegraphics[width=0.13\textwidth]{Images/TriangleBarycenterWoodForward/wood1.png}}
      \put(13.25, 0.2){ \color{white}{ \bf (0,0,1) \bf} }
      \put(16, 9.5){\includegraphics[width=0.13\textwidth]{Images/TriangleBarycenterWoodForward/woodBarycenter.png}}
      \put(16, 9.7){ \color{white}{ \bf ($\frac{1}{3}$,$\frac{1}{3}$,$\frac{1}{3}$) \bf} }
      \put(16, 7.125){\includegraphics[width=0.13\textwidth]{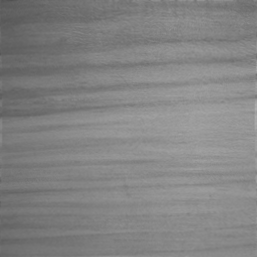}} 
      \put(16, 4.75){\includegraphics[width=0.13\textwidth]{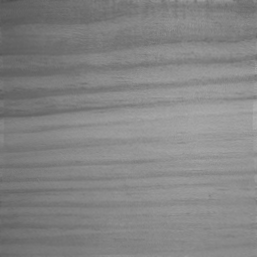}} 
      \put(16, 2.375){\includegraphics[width=0.13\textwidth]{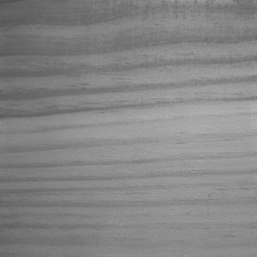}} 
      \put(16, 0){\includegraphics[width=0.13\textwidth]{Images/TriangleBarycenterWoodForward/wood2.png}} 
      \put(16, 0.2){ \color{white}{ \bf (1,0,0) \bf} }
      \put(18.75, 9.5){\includegraphics[width=0.13\textwidth]{Images/TriangleBarycenterWoodForward/woodBarycenter.png}}
      \put(18.75, 9.7){ \color{white}{ \bf ($\frac{1}{3}$,$\frac{1}{3}$,$\frac{1}{3}$) \bf} }
      \put(18.75, 7.125){\includegraphics[width=0.13\textwidth]{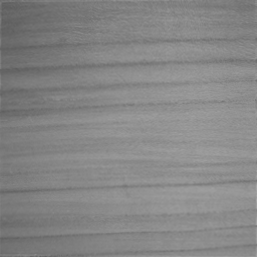}} 
      \put(18.75, 4.75){\includegraphics[width=0.13\textwidth]{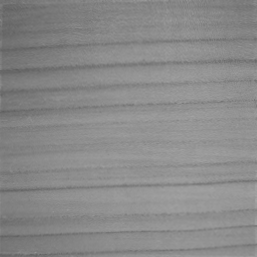}}
      \put(18.75, 2.375){\includegraphics[width=0.13\textwidth]{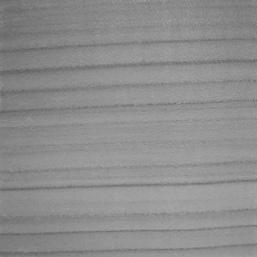}}
      \put(18.75, 0){\includegraphics[width=0.13\textwidth]{Images/TriangleBarycenterWoodForward/wood3.png}}
      \put(18.75, 0.2){ \color{white}{ \bf (0,1,0) \bf} }
    \end{picture}
  }
\caption{Weighted barycentres of three different wood textures (http://de.wikipedia.org/wiki/Holz) are shown for $\penaltyPushforward = 10^{-1}$, $\gamma = 1$ (Left: barycentric triangle
where $(\lambda_0, \lambda_1, \lambda_2)$ are overlaid each texture, right: discrete geodesics for $K=4$ between the three input textures and the barycentre are depicted.}
\label{fig:woodBarycentreForward} 
\end{figure}

\FloatBarrier
\section{Conclusion and Outlook}
In this paper we have developed a combined optimal transport and metamorphosis model and propose an effective time discretization 
of the path energy in the space of density maps. The method allows us to approximate the original Wasserstein distance and for larger viscosity parameter interesting additional effects can be observed. In particular in applications to images the incorporated source term turns out to be an appropriate way to deal with mass variability. 
Let us briefly comment on limitations and possible future extensions of the model.
So far, in the non-viscous case the source term has to be absolutely continuous with respect to the Lebesque measure (cf. Section \ref{sec:existenceNonViscous}), because the measure $\Leb^\perp$ in the decomposition of the source measures is not unique. Therefore, a singular part in $L^2$ would depend on the decomposition.
An alternative model with a source term in $L^1$ including singular parts is work in progress. In addition, in the time discrete model discussed in Section \ref{sec:timediscrete} the treatment of the source term in $L^2$ required special care, since we aim at measuring the change of densities, which is an $L^1$-concept.
Furthermore, for our generalized model including dissipation existence of geodesics in the time continuous case is unclear. In the non-viscous case we made use of a change of variables by considering the momentum instead of the velocity, but for the viscous dissipation term this does not appear to be the appropriate concept. This also renders the verification of $\Gamma$-convergence more difficult than in the case of the metamorphosis model in \cite{BeEf14}.

\section*{Acknowledgement}
The authors acknowledge support of the Collaborative Research Centre 1060 funded by the German Science foundation. This work is further supported by the King Abdullah University for Science and Technology (KAUST) Award No. KUK-I1-007-43 and the EPSRC grant Nr. EP/M00483X/1.

\end{document}